\newtheorem{theorem}{Theorem}[section]
\newtheorem{lemma}[theorem]{Lemma}
\newtheorem{sublemma}{Sublemma}[theorem]
\newtheorem{corollary}[theorem]{Corollary}
\numberwithin{equation}{section}
\newcommand{\ba}{\backslash}
\newcommand{\co}{\text{co}}
\newcommand{\si}{\text{si}}
\newcommand{\cl}{\textrm{cl}}
\newcommand{\ra}{\text{r}}
\newcommand{\la}{\lambda}
\begin{document}
\title[On contracting hyperplane elements.]
{On contracting hyperplane elements from a 3-connected matroid.}
\author{Rhiannon Hall}
\address{Brunel University, Uxbridge UB8 3PH, United Kingdom}
\email{rhiannon.hall@brunel.ac.uk}
\thanks{This research was supported by a Nuffield Foundation Award
for Newly Appointed Lecturers in Science, Engineering and
Mathematics}
\date{August 2007}
\subjclass{05B35} \keywords{matroid, 3-connected, hyperplane,
minor, chain theorem}
\begin{abstract}
Let $\tilde{K}_{3,n}$, $n\geq 3$, be the simple graph obtained
from $K_{3,n}$ by adding three edges to a vertex part of size
three. We prove that if $H$ is a hyperplane of a 3-connected
matroid $M$ and $M \not\cong M^*(\tilde{K}_{3,n})$, then there is
an element $x$ in $H$ such that the simple matroid associated with
$M/x$ is 3-connected.
\end{abstract}
\maketitle

\section{Introduction.}
Much work has been done recently on chain-type theorems and
splitter-type theorems for 3-connected matroids. In trying to
solve major problems, it is believed that we will need many of
these ``tools'' in order to make further progress. Our paper
contributes to this recent work by establishing the existence of
removable elements in particular structures of matroids. Now,
3-connectivity plays a major role in such work, as many
difficulties arise when working with matroids having
2-separations. However, considering only 3-connected matroids does
not restrict the power of our results. We rarely lose generality
by just considering 3-connected matroids, since all matroids can
be constructed from sums and 2-sums of 3-connected matroids.

When considering 3-connected matroids and their minors, holding on
to 3-connectivity can be a formidable task. We often wish to
understand what structures the original matroid would possess if
we lose 3-connectivity through taking a minor of it. Further, in
many applications, the presence of series or parallel classes does
not create any of the major difficulties that more substantial
2-separations can cause. Thus, there is also a lot of interest in
understanding the structures of 3-connected matroids from which we
take a minor, and the simplification or cosimplification of that
minor is not 3-connected.

A common problem arising from numerous contexts, is that we are
given a 3-connected matroid $M$ with sets of elements $X$ and $Y$
such that we wish to contract some element of $X$ or delete some
element of $Y$. What structure might $M$ have if for all $x\in X$
and for all $y\in Y$, $\si(M/x)$ and $\co(M\ba y)$ (the
simplification of $M/x$ and cosimplification of $M\ba y$) are not
3-connected? Oxley et. al.~\cite{basis} considered this problem in
the case where $X$ is a basis and $Y$ is its corresponding
cobasis, in fact they also proved a stronger result, a
splitter-type theorem. Our paper focuses on the case where $X$ is
a hyperplane, a problem suggested by Whittle in a private
communication. In other words, which 3-connected matroids $M$ have
the property that they contain a hyperplane $H$, such that for all
$h\in H$, $\si(M/h)$ is not 3-connected? Let $\tilde{K}_{3,n}$,
$n\geq 3$, be the simple graph obtained from $K_{3,n}$ by the
addition of three edges to a vertex part of size three. We will
show that the set of all 3-connected matroids having the
hyperplane contraction property just stated, is the family
$\mathcal{P}^*$, which we define to be the family of all cographic
matroids  $M^*(\tilde{K}_{3,n})$, $n\geq 3$. Note that if we
define $\mathcal{P}$ to be the family of all matroids whose dual
is a member of $\mathcal{P}^*$, then $\mathcal{P}$ is equal to the
family of all matroids $M$, such that $E(M)=\{c_1,c_2,c_3,
t_{11},t_{12},t_{13}, t_{21},t_{22},t_{23}, \dots,
t_{n1},t_{n2},t_{n3}\}$, $n\geq 3$, where $\{c_1,c_2,c_3\}$ is a
triangle, $\{t_{i1},t_{i2},t_{i3}\}$ is a triad, and where
$M|_{\{c_1,c_2,c_3,t_{i1},t_{i2},t_{i3}\}}\cong K_4$ for all
$1\leq i\leq n$. This equivalence can be seen by observing that
these matroids are all graphic, as none of them have a minor
isomorphic to $U_{2,4}$, $F_7$, $F_7^*$, $M^*(K_5)$ or
$M^*(K_{3,3})$ (the five excluded minors for graphic matroids
found by Tutte~\cite{tutte}, see also Oxley~\cite[Theorem
6.6.5]{oxley}). Diagrams of graphic and geometric representations
of the matroids of $\mathcal{P}$ are given in
Figure~\ref{dualpics}.

\begin{figure}
\begin{tikzpicture}
    \draw (-1,2)--(1,2);
    \draw (-1,2) .. controls (0,2.8) .. (1,2);
    \draw (-1,2)--(-2,0);
    \draw (-1,2)--(-1,0);
    \draw (-1,2)--(0,0);
    \draw (-1,2)--(2.5,0);
    \draw (0,2)--(-2,0);
    \draw (0,2)--(-1,0);
    \draw (0,2)--(0,0);
    \draw (0,2)--(2.5,0);
    \draw (1,2)--(-2,0);
    \draw (1,2)--(-1,0);
    \draw (1,2)--(0,0);
    \draw (1,2)--(2.5,0);
    \filldraw[black] (0,0)circle(3pt) (-1,0)circle(3pt)
    (-2,0)circle(3pt) (2.5,0)circle(3pt) (-1,2)circle(3pt)
    (0,2)circle(3pt) (1,2)circle(3pt);
    \draw (1,0)node{\textbf{\dots}};
    \draw (-2,-0.4)node{1} (-1,-0.4)node{2} (0,-0.4)node{3} (2.5,-0.4)node{$n$};
    \draw (3.9,-1.5)node{\phantom{d}};
\end{tikzpicture}
\begin{tikzpicture}
    \draw (0,-1.3)--(-1.725,-0.15)--(-1.725,2.45)--(0,1.3)--(0,-1.3);
    \draw (0,1)--(-1.5,0.5);
    \draw (0,0)--(-1.5,0.5);
    \draw (0,-1)--(-1,0.67);
    \filldraw[black] (0,1)circle(3pt) (0,0)circle(3pt) (0,-1)circle(3pt)
    (-0.75,0.25)circle(3pt) (-1.5,0.5)circle(3pt) (-1,0.67)circle(3pt);

    \draw (0,-1.3)--(-2.3,-1.3)--(-2.3,1.3)--(-1.725,1.3);

    \draw (0,-1.3)--(-1.84,-2.45)--(-1.84,-1.3);

    \draw (0,-1.3)--(-1.38,-3.6)--(-1.38,-2.16);

    \draw (0,-1.3)--(1.725,1)--(1.725,3.6)--(0,1.3)--(0,-1.3);
    \draw (0,1)--(1.5,1.5);
    \draw (0,0)--(1.5,1.5);
    \draw (0,-1)--(1,1.33);
    \filldraw[black] (0,1)circle(3pt) (0,0)circle(3pt) (0,-1)circle(3pt)
    (0.75,0.75)circle(3pt) (1.5,1.5)circle(3pt) (1,1.33)circle(3pt);

    \draw (0,-1.3)--(2.3,-3.6)--(2.3,-1)--(1.114,0.186);
    \draw (0.746,-0.305)--(2,-2.5);
    \draw (0.503,-0.629)--(2,-2.5);
    \draw (0.189,-1.048)--(1.33,-1.33);
    \filldraw[black] (0,1)circle(3pt) (0,0)circle(3pt) (0,-1)circle(3pt)
    (1,-1.25)circle(3pt) (2,-2.5)circle(3pt) (1.33,-1.33)circle(3pt);

    \draw[gray] (0,1.3)--(1.114,0.186);
    \draw[gray] (0.746,-0.305)--(0,1);
    \draw[gray] (0.503,-0.629)--(0,0);
    \draw[gray] (0.189,-1.048)--(0,-1);
    \draw[thick] (0,-1.3)--(0,1.3);

    \draw[gray] (0,1.3)--(-1.725,1.3);
\end{tikzpicture}
\caption{A graphic representation and a geometric representation
of matroid from the class $\mathcal{P}$, the class of matroids
having a cohyperplane $H$ such that for all $x\in H$, $\co(M\ba
x)$ is not 3-connected. Within the graph, $H$ consists of all of
the edges of the $K_{3,n}$ subgraph. Within the geometric
representation, $H$ consists of all elements that are not in the
three-point line that is common to all copies of $K_4$.}
\label{dualpics}
\end{figure}
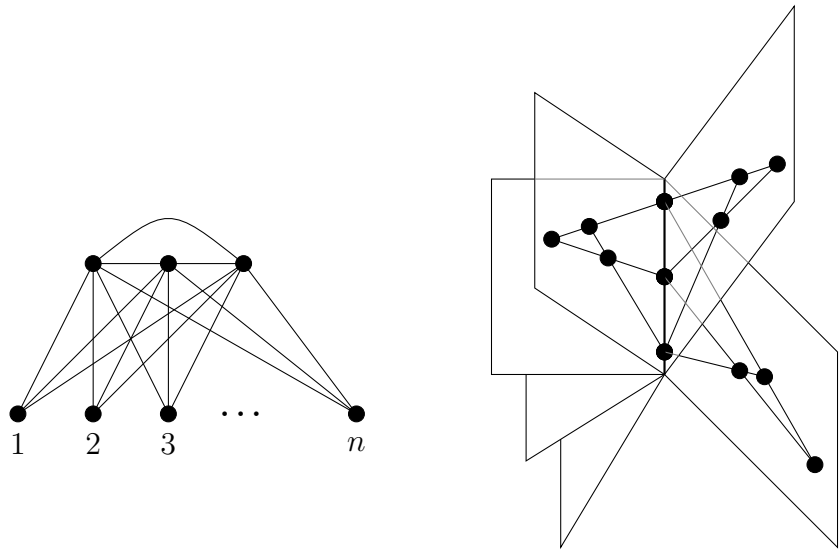

Stated formally, the main theorem of this paper is as follows.
\begin{theorem}\label{main}
Let $M$ be a 3-connected matroid. Then $M$ has a hyperplane $H$
such that for all $h\in H$, $\si(M/h)$ is not 3-connected if and
only if $M\cong M^*(\tilde{K}_{3,n})$ for some $n\geq 3$.
\end{theorem}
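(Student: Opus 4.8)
The plan is to prove the two directions separately; the reverse (``if'') direction is a short verification, while the forward (``only if'') direction carries essentially all of the work. For ``if'', take $M=M^*(\tilde{K}_{3,n})$, identify $E(M)$ with the edge set of $\tilde{K}_{3,n}$, and let $H$ be the set of edges of the $K_{3,n}$-subgraph; since the complementary set $\{c_1,c_2,c_3\}$ is a triangle of $\tilde{K}_{3,n}$, hence a cocircuit of $M^*(\tilde{K}_{3,n})$, the set $H$ is a hyperplane of $M$. For $h\in H$ we have $\si(M/h)=(\co(M(\tilde{K}_{3,n})\ba h))^*$, so it suffices to show $\co(M(\tilde{K}_{3,n})\ba h)$ is not $3$-connected. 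Deleting the edge $h$ drops one vertex of the size-$n$ part of $\tilde{K}_{3,n}$ to degree two, producing a single series pair; contracting one edge of that pair identifies this vertex with a vertex of the size-three part and thereby creates two parallel edges, one of which is a triangle edge. Hence $\co(M(\tilde{K}_{3,n})\ba h)$ has a parallel pair and at least four elements, so it is not $3$-connected, and neither is its dual; transitivity of the automorphism group of $\tilde{K}_{3,n}$ on the edges of the $K_{3,n}$-subgraph gives this for every $h\in H$.

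For the ``only if'' direction I would first convert the hypothesis into connectivity data. Assume $M$ is $3$-connected with a hyperplane $H$ as in the statement, and put $C^*=E(M)\ba H$, a cocircuit of $M$. When $r(M)$ is not too small, ``$\si(M/h)$ not $3$-connected'' is equivalent to the existence of a partition $(A_h,B_h)$ of $E(M)\ba h$ with $r_M(A_h),r_M(B_h)\ge 3$, with $r_M(A_h)+r_M(B_h)=r(M)+2$, and with $h\in\cl(A_h)\cap\cl(B_h)$ (the last point coming from submodularity together with the $3$-connectedness of $M$). In addition, Bixby's Lemma shows that $\co(M\ba h)$ \emph{is} $3$-connected for every $h\in H$, and $C^*$ stays a cocircuit of $M/h$. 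I would dispose of the small-rank, small-corank and small-size cases by direct analysis; these supply the base of the induction below and also confirm that no $3$-connected matroid outside the family satisfies the property for degenerate size reasons.

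The main argument would be an induction on $|E(M)|$. The crux is to exploit $C^*$ together with the whole family of vertical $2$-separations $(A_h,B_h)$, $h\in H$: by uncrossing these separations against one another and against $C^*$, keeping track of guts and coguts elements at each stage, one shows the separations can be chosen coherently, and this coherence should force $M^*$ to carry a large, rigidly organised collection of triangles and triads -- many $K_4$-blocks sharing a common triangle, which is precisely the structure of $\tilde{K}_{3,n}$. Having isolated one such block, I would find an element $z\in H$ whose contraction in $M$, followed by the contractions that cosimplification then forces, yields a $3$-connected matroid $M'$ with $|E(M')|<|E(M)|$ that again has a hyperplane with the required property; by induction $M'\cong M^*(\tilde{K}_{3,n-1})$, and a reattachment analysis -- showing the only way to add the removed elements back to $M'$ while preserving $3$-connectivity and the hyperplane property is to re-create a $K_4$-block on the common triangle -- would give $M\cong M^*(\tilde{K}_{3,n})$.

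The hard part will be this structural step. The separations $(A_h,B_h)$ are far from unique, and a priori they need neither agree with one another nor respect $C^*$, so pinning down the global $\tilde{K}_{3,n}$-pattern requires controlling many of them simultaneously and systematically eliminating a long list of near-miss configurations -- matroids satisfying the property on a proper subset of some hyperplane but not on all of it. Guaranteeing that both $3$-connectivity and the \emph{global} hyperplane property survive the inductive removal of $z$ (rather than merely a local $2$-separation being destroyed), and ruling out every spurious reattachment, is where I expect the bulk of the case checking to be concentrated.
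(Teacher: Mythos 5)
Your ``if'' direction is correct and matches the paper's (one-line) verification. The ``only if'' direction, however, has a genuine gap: the entire structural content of the theorem is deferred to a step you describe only as a plan (``uncrossing \dots should force $M^*$ to carry a large, rigidly organised collection of triangles and triads''), and the inductive scheme you wrap around that step rests on an unjustified claim. Specifically, you assert that one can find $z\in H$ such that the $3$-connected matroid $M'$ obtained by contracting $z$ and cosimplifying ``again has a hyperplane with the required property.'' Nothing in the hypothesis guarantees this: the property quantifies over \emph{all} elements of a hyperplane, and a single reduction can create new contractible elements in the image of $H$, so you would first have to prove the property is inherited --- which, for a general $M$ satisfying the hypothesis, is essentially as hard as identifying the structure of $M$ outright. (Note also that $\co(\si(M/h))\cong M^*(\tilde{K}_{3,n-1})$ is something you know only \emph{after} establishing $M\cong M^*(\tilde{K}_{3,n})$, and for $n=3$ the reduced matroid is not even in the family, so the base case of your induction would have to absorb the full structural argument anyway.)

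The paper avoids induction entirely. It fixes a vertical $3$-partition $(X_1,x,X_2)$ chosen ``minimal'' via Lemma~\ref{minimal} and Corollary~\ref{bigcor}, uncrosses it against a second vertical $3$-partition $(Y_1,y,Y_2)$, and splits on $|X_2\cap Y_2|$. The case $|X_2\cap Y_2|=1$ is eliminated by producing a segment inside $H$ and contradicting Corollary~\ref{segment}; in the case $|X_2\cap Y_2|\geq 2$ one shows that $X_1$, $Y_1$ and a third set $Z_1$ are triads, that the cocircuit $C=E(M)-H$ is itself a triad $\{a,b,c\}$, and that $M^*|_{\{x,y,z,a,b,c\}}\cong K_4$. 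The global $\tilde{K}_{3,n}$ pattern is then obtained from Lemma~\ref{dual}, which shows directly --- by a propagation argument, not induction --- that once $C$ is a triangle of $M^*$, \emph{every} $h\in H$ lies in a triad $T$ with $M^*|_{T\cup C}\cong K_4$. To repair your proof you would need either to justify the inheritance step and carry out the reattachment analysis in full, or (as the paper does) to replace the induction by a direct identification of one $K_4$-block followed by such a propagation lemma; in either case the case analysis you have postponed still has to be supplied.
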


Note that within the matroid $M=M^*(\tilde{K}_{3,n})$, the
hyperplane $H$ in question, consists of the elements that
correspond to the edges of the original $K_{3,n}$ graph.
Furthermore, if $h\in H$ then $\si(M/h)$ is not 3-connected due to
the existence of a single series pair, and $\co(\si(M/h))$ is
3-connected with $\co(\si(M/h))\cong M^*(\tilde{K}_{3,n-1})$.

In proving Theorem~\ref{main}, we also prove a lemma that will be
of independent interest, as it may be applicable to various
situations where we have a set of elements from which we wish to
contract some member and keep a particular 3-connected minor. This
lemma is:

\begin{theorem}\label{main2}
Let $(X_1,x,X_2)$ be a vertical 3-partition of a 3-connected
matroid $M$. Then there exists $y\in X_i$, $i=1,2$, such that
$\si(M/y)$ is 3-coonnected.
\end{theorem}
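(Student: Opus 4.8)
The plan is to argue by contradiction: suppose $\si(M/y)$ is not $3$-connected for every $y\in X_1\cup X_2$. Since $(X_1,x,X_2)$ is a vertical $3$-partition we have $r(X_1),r(X_2)\geq 3$, so $r(M)\geq 4$; as $M$ is simple and cosimple, it follows that each $M/y$ is loopless, coloopless and connected. Hence, for every such $y$, the failure of $3$-connectivity of $\si(M/y)$ forces $M/y$ to have a \emph{vertical} $2$-separation, namely a $2$-separation $(A,B)$ with $r_{M/y}(A),r_{M/y}(B)\geq 2$. My first step is a short rank count translating this back into $M$: if $(A,B)$ is a vertical $2$-separation of $M/y$, then $y\notin\cl_M(A)\cup\cl_M(B)$, and both $(A\cup y,B)$ and $(A,B\cup y)$ are vertical $3$-separations of $M$ whose two sides each have rank, hence size, at least three. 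The two alternative cases, $y\in\cl_M(A)$ and $y\in\cl_M(B)$, each turn $(A,B)$ into a $1$- or $2$-separation of $M$ and so contradict $3$-connectivity. Thus every ``bad'' element of $X_1\cup X_2$ records a vertical $3$-separation of $M$ that properly crosses it.

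The second step is to uncross these separations against the fixed partition $(X_1,x,X_2)$. Fixing $y\in X_1$ and a vertical $2$-separation $(A,B)$ of $M/y$ with $|A|$ minimal, I would apply submodularity of $\lambda_M$ to the $3$-separating sets $A\cup y$, $B\cup y$, $X_1\cup x$ and $X_2$, repeatedly invoking $3$-connectivity to discard degenerate intersections and unions, and thereby arrange that $A\subseteq X_1$ with $A$ a minimal $3$-separating subset of $X_1$ of rank at least three that contains $y$. Minimality then forces $A$, together with the flat $\cl_M(A)\cap G$ where $G=\cl_M(X_1)\cap\cl_M(X_2)$ is the guts, to carry a fan of $M$ (possibly degenerating to a segment or a cosegment) lying on the $X_1$-side; standard facts about such substructures then hand us an element $z\in A\subseteq X_1$ with $\si(M/z)$ $3$-connected — for instance the end of a segment nearest $G$, or a rim element of a fan adjacent to an end — contradicting the badness of $z$.

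If the $X_1$-side cannot be forced into such a configuration, that can only be because every vertical $2$-separation of every $M/y$, $y\in X_1$, crosses over towards $X_2$; a finer uncrossing then plays the two sides off against one another, using that $X_2$ too is all-bad, to reach the contradiction. The case analysis should be organised around the rank of $G$ — rank one, so $G=\{x\}$ by simplicity of $M$, versus rank two — and, when $G$ has rank two, around whether $G$ meets $X_1$ or $X_2$; the base cases $r(X_1)=3$ and $r(X_2)=3$ are best handled first, since the relevant side is then already sequential over $G$ and the element to contract can be read off directly.

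The step I expect to be the main obstacle is the uncrossing of the second paragraph. Submodularity alone gives only $\lambda_M(P\cap Q)+\lambda_M(P\cup Q)\leq 4$ for $3$-separating sets $P,Q$; turning this into the assertion that the offending $2$-separation of $M/y$ may be localised inside $X_1$ and taken minimal requires a careful, iterated use of $3$-connectivity, and the bookkeeping is most delicate exactly where $A$ overlaps $G$ and the contracted element $y$. Controlling this interaction with the guts, and then checking in each resulting fan, segment or cosegment that the correctly chosen contraction simplifies to a $3$-connected matroid, is where the substance of the argument lies.
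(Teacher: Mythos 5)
Your opening rank count is backwards, and this is not a cosmetic slip. If $(A,B)$ is a vertical $2$-separation of $M/y$ with $M$ $3$-connected, then $y$ lies in $\cl_M(A)\cap\cl_M(B)$ (this is the content of Lemmas~\ref{contr1} and~\ref{contr2}): from $\la_{M/y}(A)=1$ one gets $\ra_M(A\cup\{y\})+\ra_M(B\cup\{y\})-\ra(M)=2$, and $3$-connectivity of $M$ then forces $\ra_M(A\cup\{y\})=\ra_M(A)$ and $\ra_M(B\cup\{y\})=\ra_M(B)$. The configuration you assert, $y\notin\cl_M(A)\cup\cl_M(B)$, is exactly the impossible one: it would make $(A,B\cup\{y\})$ a $2$-separation of $M$. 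Conversely, $y\in\cl_M(A)$ does not ``turn $(A,B)$ into a $1$- or $2$-separation of $M$''; it gives $\la_M(A)=2$, which is harmless. The fact that $y$ sits in the guts of its separation is used constantly downstream (for instance to show that both sides of a crossing partition $(Y_1,y,Y_2)$ must meet $X_1$), so this error propagates through everything that follows.

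The larger problem is that the two steps carrying all the content are, respectively, unreachable and missing. Localising a minimal $3$-separating $A\subseteq X_1$ containing $y$ cannot be the main case: after the natural minimality reduction (Lemma~\ref{minimal} in the paper) one arranges that for every bad $y\in X_1$, \emph{every} vertical $3$-partition $(Y_1,y,Y_2)$ has both $Y_1$ and $Y_2$ meeting $X_2$, i.e.\ the separations always cross $(X_1,x,X_2)$. So the case you defer to ``a finer uncrossing playing the two sides off against one another'' is the entire theorem, and you give no argument for it. Moreover ``minimality forces a fan'' is asserted rather than proved, and it is not what happens: in the crossing case the structure that emerges is that $|X_2\cap Y_2|=1$ and $(X_1\cap Y_2)\cup\{y\}$ is a maximal rank-$2$ segment in $X_1$ with $Y_1\cup\{y\}$ closed, and the final contradiction comes from comparing the segments attached to two distinct bad elements $y$ and $s$ and showing they would have to coincide while being separated by a closed set --- not from exhibiting a contractible element of a fan or cosegment. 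As written, the proposal has the right outer shell (contradiction, uncrossing against the fixed partition) but neither of the load-bearing steps survives scrutiny.
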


The paper is structured as follows. Section 2 provides some
preliminary lemmas on matroid connectivity. In Section 3, we prove
Theorem~\ref{main2} as well as some lemmas specific to our
hyperplane problem. Section 4 completes the proof of
Theorem~\ref{main}. All terminology is taken from
Oxley~\cite{oxley}, with the exception that $\si(M)$ and $\co(M)$
denote the simplification and cosimplification of $M$
respectively.

\section{Preliminaries.}
This section will provide definitions and results, mostly on
connectivity, that are useful tools when applied to problems in
matroid structure theory. We begin the section with some
definitions on matroid connectivity. Let $M$ be a matroid on the
groundset $E(M)$. The function defined on all subsets of $E(M)$,
given by $\lambda(A)=\ra(A)+\ra(E(M)-A)-\ra(M)$, is known as the
\textit{connectivity function} of $M$. We say that a subset
$A\subseteq E(M)$ is \textit{$k$-separating} or a
\textit{$k$-separator of} $M$ if $\lambda(A)\leq k-1$, and we say
that a partition $(A,E(M)-A)$ is a \textit{$k$-separation} of $M$
if $\lambda(A)\leq k-1$ and $|A|,|E(M)-A|\geq k$. A $k$-separator
or $k$-separation is \textit{exact} if $\lambda(A)=k-1$. A matroid
$M$ is said to be \textit{$k$-connected} if $M$ has no
$k'$-separation for any $k'<k$. We define a \textit{$k$-partition}
of $M$ to be a partition $(A_1,A_2,\dots,A_n)$ of $E(M)$ in which
$A_i$ is $k$-separating for all $1\leq i\leq n$, and an
\textit{exact $k$-partition} is where $A_i$ is exactly
$k$-separating for all $1\leq i\leq n$.

It is well known that the connectivity function of a matroid is
submodular, that is for all $X,Y\subseteq E(M)$, we have
$\lambda(X\cap Y)+\lambda(X\cup Y)\leq\lambda(X)+\lambda(Y)$. From
this the following result of Geelen and Whittle~\cite{geewhi} is
elementary and will be used repeatedly. It is commonly referred to
as ``uncrossing''.
\begin{lemma}\label{uncrossing}
Let $M$ be a 3-connected matroid with 3-separating sets $X$ and
$Y$. Then
\begin{itemize}
\item if $|X\cap Y|\geq 2$ then $X\cup Y$ is a 3-separator of $M$;
\item if $|E(M)-(X\cup Y)|\geq 2$ then $X\cap Y$ is a 3-separator
of $M$;
\item if $|X\cap Y|=1$ then $X\cup Y$ is a 4-separator of $M$.
\end{itemize}
\end{lemma}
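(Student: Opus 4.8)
The plan is to derive all three statements from the submodular inequality $\lambda(X\cap Y)+\lambda(X\cup Y)\le\lambda(X)+\lambda(Y)$ together with a handful of trivial bounds on $\lambda$; once these are assembled the proof is short. First I would record the facts that do all the work: $\lambda(\emptyset)=0$; for a single element $e$ we have $\lambda(\{e\})\le 1$, since $\ra(\{e\})\le 1$ and $\ra(E(M)-\{e\})\le\ra(M)$; $\lambda(A)=\lambda(E(M)-A)$ for every $A\subseteq E(M)$; and, crucially, since $M$ is 3-connected, $\lambda(A)\ge 2$ whenever $|A|\ge 2$ and $|E(M)-A|\ge 2$, as otherwise $(A,E(M)-A)$ would be a 1-separation or a 2-separation. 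Because $X$ and $Y$ are 3-separating we also have $\lambda(X),\lambda(Y)\le 2$, so submodularity yields $\lambda(X\cap Y)+\lambda(X\cup Y)\le 4$ in every case.

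For the first bullet, assume $|X\cap Y|\ge 2$ and aim for $\lambda(X\cup Y)\le 2$. If $|E(M)-(X\cup Y)|\le 1$ then $\lambda(X\cup Y)=\lambda(E(M)-(X\cup Y))\le 1$ by the complementation identity and the single-element bound. Otherwise $|E(M)-(X\cup Y)|\ge 2$; since $X\cap Y\subseteq X\cup Y$ this also gives $|E(M)-(X\cap Y)|\ge 2$, so $\lambda(X\cap Y)\ge 2$, whence $\lambda(X\cup Y)\le 4-2=2$. The second bullet is the complementary uncrossing, handled the same way: if $|X\cap Y|\le 1$ then $\lambda(X\cap Y)\le 1$ at once; and if $|X\cap Y|\ge 2$ then $|X\cup Y|\ge 2$ while $|E(M)-(X\cup Y)|\ge 2$ by hypothesis, so $\lambda(X\cup Y)\ge 2$ and therefore $\lambda(X\cap Y)\le 2$.

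The third bullet is where the arithmetic is tightest, and so is the step I expect to need the most care: submodularity together with only $\lambda(X\cap Y)\ge 0$ gives $\lambda(X\cup Y)\le 4$, which is one short of the claimed bound $\lambda(X\cup Y)\le 3$. The fix is that when $|X\cap Y|=1$ the single element $e\in X\cap Y$ lies in a 3-connected matroid, so (provided $|E(M)|\ge 2$) $e$ is neither a loop nor a coloop and hence $\lambda(\{e\})=1$ exactly; then $\lambda(X\cup Y)\le 4-1=3$, so $X\cup Y$ is a 4-separator (the cases $|E(M)|\le 1$ or $|E(M)-(X\cup Y)|\le 1$ being trivial as before). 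Thus the only genuine pitfalls are the degenerate small-matroid configurations and remembering to use the sharp value $\lambda(\{e\})=1$ rather than just $\lambda(\{e\})\ge 0$ in the last part; everything else is a single application of submodularity in each case.
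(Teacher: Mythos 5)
Your argument is correct, and it is exactly the route the paper intends: the paper gives no proof of Lemma~\ref{uncrossing}, citing Geelen and Whittle and remarking only that it is elementary from the submodularity of $\lambda$, which is precisely the inequality you combine with the bounds $\lambda(X),\lambda(Y)\le 2$, $\lambda(\{e\})\le 1$, complementation, and the fact that 3-connectivity forces $\lambda(A)\ge 2$ when both sides have at least two elements (and $\lambda(\{e\})=1$ exactly, since a 3-connected matroid on at least two elements has no loops or coloops). Nothing further is needed.
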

The following is straight forward and can be easily proved by
considering the dual matroid.
\begin{lemma}\label{circuit}
Let $X$ be a series class of a 2-connected matroid $M$ with
$y\in\cl\, (X)-X$. Then $X\cup\{y\}$ is a circuit of $M$.
\end{lemma}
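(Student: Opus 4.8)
The plan is to produce a single circuit of $M$ sitting inside $X\cup\{y\}$, and then to force it to be all of $X\cup\{y\}$ using the fact that a series class meets every circuit either in the whole class or not at all.

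First I would invoke the standard fact that, since $y\in\cl(X)$ and $y\notin X$, the matroid $M$ has a circuit $C$ with $y\in C\subseteq X\cup\{y\}$: pick a basis $B$ of $X$; then $y\in\cl(X)=\cl(B)$, so $B\cup\{y\}$ is dependent, and the unique circuit it contains must use $y$ (otherwise it would lie in the independent set $B$) and is contained in $X\cup\{y\}$. Because $M$ is 2-connected it has no loops, so $C\neq\{y\}$ and hence $C\cap X\neq\emptyset$; fix $e\in C\cap X$. Now I would show $X\subseteq C$. Suppose not, and choose $f\in X\setminus C$. Since $X$ is a series class and $e\neq f$, the pair $\{e,f\}$ is a cocircuit of $M$; but then the circuit $C$ meets the cocircuit $\{e,f\}$ in the single element $e$, which is impossible. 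Hence $X\subseteq C$, and since also $C\subseteq X\cup\{y\}$ with $y\in C$, we conclude $C=X\cup\{y\}$, so $X\cup\{y\}$ is a circuit. (The case $|X|=1$ needs no argument: then $\emptyset\neq C\cap X\subseteq X=\{e\}$ already gives $C\cap X=X$.)

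This is exactly the ``consider the dual matroid'' route the statement alludes to, in undisguised form: $X$ is a series class of $M$ precisely when it is a parallel class of $M^{*}$, and $X\cup\{y\}$ is a circuit of $M$ precisely when it is a cocircuit of $M^{*}$, so one may instead run the whole argument inside $M^{*}$ with the words ``circuit'' and ``cocircuit'' (and ``series'' and ``parallel'') interchanged, orthogonality being self-dual. There is no genuine obstacle here; the only two steps that deserve a moment's care are noting that $M$ has no loops (so that $C$ really does meet $X$) and the appeal to circuit--cocircuit orthogonality.
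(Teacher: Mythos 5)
Your proof is correct and complete. The paper in fact gives no proof of Lemma~\ref{circuit} at all --- it merely asserts that the statement ``can be easily proved by considering the dual matroid'' --- and your argument (fundamental circuit of $y$ on a basis of $X$, then circuit--cocircuit orthogonality against the series pairs $\{e,f\}\subseteq X$ to force $X\subseteq C$) is exactly the standard way to fill in that omission; you also correctly handle the degenerate points ($M$ loopless, $|X|=1$) and correctly describe how the same argument dualizes.
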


We now define segments, cosegments and fans. These structures have
appeared often in the literature due to their high quantities of
triangles and triads. A \textit{segment} of a 3-connected matroid
is a set of elements in which every three-element subset is a
triangle. A \textit{cosegment} of a 3-connected matroid is a set
of elements in which every three element subset is a triad.
Segments and cosegments are known in some literature as lines and
colines respectively. A \textit{fan} of a 3-connected matroid is
an ordered set of elements $\{f_1,f_2,\dots,f_n\}$ in which
$\{f_i,f_{i+1},f_{i+2}\}$ is a triangle or a triad for all $1\leq
i\leq n-2$, where if $\{f_i,f_{i+1},f_{i+2}\}$ is a triangle then
$\{f_{i+1},f_{i+2},f_{i+3}\}$ is a triad, and if
$\{f_i,f_{i+1},f_{i+2}\}$ is a triad then
$\{f_{i+1},f_{i+2},f_{i+3}\}$ is a triangle.

The next three lemmas appear in \cite{sequences}, and are useful
when considering elements of a matroid that can be moved from one
side of a $k$-separation to the other.
\begin{lemma}\label{coclosure}
Let $M$ be a matroid and let $(X,Y,\{z\})$ be a partition of
$E(M)$. Then $z\in\cl^{\, *}(Y)$ if and only if $z\notin\cl\,
(X)$.
\end{lemma}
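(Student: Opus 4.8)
The plan is to unwind both sides of the equivalence in terms of rank functions and then use the standard formula for the dual rank. Write $\ra^{*}$ for the rank function of $M^{*}$, and recall the identity $\ra^{*}(A)=|A|+\ra(E(M)-A)-\ra(M)$ valid for every $A\subseteq E(M)$. Since $z\notin X$, the condition $z\in\cl\,(X)$ is equivalent to $\ra(X\cup\{z\})=\ra(X)$; and since $\cl^{\,*}$ is by definition the closure operator of $M^{*}$ and $z\notin Y$, the condition $z\in\cl^{\,*}(Y)$ is equivalent to $\ra^{*}(Y\cup\{z\})=\ra^{*}(Y)$. So it suffices to compare these two rank identities.

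First I would compute the two dual ranks directly from the formula, using that $(X,Y,\{z\})$ is a partition. Because $E(M)-Y=X\cup\{z\}$, we get $\ra^{*}(Y)=|Y|+\ra(X\cup\{z\})-\ra(M)$, and because $E(M)-(Y\cup\{z\})=X$, we get $\ra^{*}(Y\cup\{z\})=|Y|+1+\ra(X)-\ra(M)$. Subtracting, $\ra^{*}(Y\cup\{z\})-\ra^{*}(Y)=1-\bigl(\ra(X\cup\{z\})-\ra(X)\bigr)$. Since $\ra(X\cup\{z\})-\ra(X)\in\{0,1\}$, the left-hand side is $0$ precisely when $\ra(X\cup\{z\})=\ra(X)+1$ and is $1$ precisely when $\ra(X\cup\{z\})=\ra(X)$. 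Translating back: $z\in\cl^{\,*}(Y)$ holds iff $\ra^{*}(Y\cup\{z\})=\ra^{*}(Y)$ iff $\ra(X\cup\{z\})\neq\ra(X)$ iff $z\notin\cl\,(X)$, which is exactly the assertion.

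I do not expect a genuine obstacle here; the only point requiring a little care is the bookkeeping that $z\notin X$ and $z\notin Y$, so that $|Y\cup\{z\}|=|Y|+1$ and the set complements are as written. For completeness one could instead give a ``structural'' argument: $z\notin\cl\,(X)$ iff $X$ is contained in some hyperplane of $M$ avoiding $z$, iff there is a cocircuit of $M$ through $z$ disjoint from $X$ (equivalently, contained in $Y\cup\{z\}$), iff there is a circuit of $M^{*}$ through $z$ contained in $Y\cup\{z\}$, iff $z\in\cl^{\,*}(Y)$. The rank computation above is shorter and entirely self-contained, so that is the route I would take.
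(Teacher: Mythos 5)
Your proposal is correct and follows essentially the same route as the paper: both apply the corank formula $\ra^{*}(A)=|A|+\ra(E(M)-A)-\ra(M)$ to $Y$ and $Y\cup\{z\}$ and compare the resulting rank identities. The extra bookkeeping you spell out (and the alternative cocircuit argument you sketch) is fine but not needed beyond what the paper already does.
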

\begin{proof}
By the well known formula for corank, see for example
Oxley~\cite{oxley}, we have $\ra^*(Y)=|Y|+\ra(X\cup\{z\})-\ra(M)$,
and $\ra^*(Y\cup\{z\})=|Y\cup\{z\}|+\ra(X)-\ra(M)$. It follows
easily that $z\in\cl^*(Y)$ if and only if
$\ra(X\cup\{z\})=\ra(X)+1$ if and only if $z\notin\cl(X)$.
\end{proof}

\begin{lemma}\label{guts}
Let $(X,Y,\{z\})$ be a partition of $E(M)$ for some matroid $M$.
If $\lambda(X)=\la(Y)$ then either $z\in\cl\, (X)\cap\cl\, (Y)$ or
$z\in\cl^{\, *}(X)\cap\cl^{\, *}(Y)$.
\end{lemma}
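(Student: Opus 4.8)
The plan is to prove Lemma~\ref{guts} by directly comparing $\ra(X\cup\{z\})$, $\ra(Y\cup\{z\})$ against $\ra(X)$, $\ra(Y)$, and then invoking Lemma~\ref{coclosure} and its dual. First I would write out the two connectivity expressions: $\la(X) = \ra(X) + \ra(Y\cup\{z\}) - \ra(M)$ and $\la(Y) = \ra(Y) + \ra(X\cup\{z\}) - \ra(M)$. Since $z\in\cl(X)$ exactly when $\ra(X\cup\{z\})=\ra(X)$, and otherwise $\ra(X\cup\{z\})=\ra(X)+1$, there are exactly four cases according to whether $z$ lies in $\cl(X)$ and whether $z$ lies in $\cl(Y)$; in each case the hypothesis $\la(X)=\la(Y)$ forces a relation between $\ra(X\cup\{z\})-\ra(X)$ and $\ra(Y\cup\{z\})-\ra(Y)$.

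The key observation is that $\la(X)=\la(Y)$ rearranges to $\ra(X) - \ra(X\cup\{z\}) = \ra(Y) - \ra(Y\cup\{z\})$, i.e. the "rank increase on adding $z$ to $X$" equals the "rank increase on adding $z$ to $Y$". Each side of this equation is $0$ or $1$. So either both increases are $0$, meaning $z\in\cl(X)$ and $z\in\cl(Y)$, which gives the first alternative; or both increases are $1$, meaning $z\notin\cl(X)$ and $z\notin\cl(Y)$. In the latter case I would apply Lemma~\ref{coclosure}: from $z\notin\cl(X)$ we get $z\in\cl^*(Y)$, and applying the same lemma with the roles of $X$ and $Y$ interchanged, from $z\notin\cl(Y)$ we get $z\in\cl^*(X)$. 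Hence $z\in\cl^*(X)\cap\cl^*(Y)$, which is the second alternative, completing the proof.

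There is essentially no obstacle here; the only point requiring a little care is making sure Lemma~\ref{coclosure} is genuinely symmetric in $X$ and $Y$ — it is, since $(Y,X,\{z\})$ is also a partition of $E(M)$, so the lemma applies verbatim after swapping the names. One could equally phrase the whole argument in terms of the two corank formulas used in the proof of Lemma~\ref{coclosure}, but routing through the closure/coclosure dichotomy keeps the bookkeeping minimal. I would present it as a short case analysis of at most half a page.
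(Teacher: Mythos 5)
Your argument is correct and is essentially identical to the paper's own proof: both write out the two expressions for $\la(X)$ and $\la(Y)$, deduce that $\ra(X\cup\{z\})-\ra(X)=\ra(Y\cup\{z\})-\ra(Y)$, conclude that $z\in\cl(X)$ if and only if $z\in\cl(Y)$, and finish with Lemma~\ref{coclosure}. (Only a cosmetic slip: your displayed rearrangement has the signs of both differences reversed, though your verbal reading of it as ``the rank increases are equal'' is the correct one.)
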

\begin{proof}
Firstly, $\la(X)=\ra(X)+\ra(Y\cup\{z\})-\ra(M)$ and
$\la(Y)=\ra(X\cup\{z\})+\ra(Y)-\ra(M)$. Since $\la(X)=\la(Y)$, it
follows that $\ra(Y\cup\{z\})-\ra(Y)=\ra(X\cup \{z\})-\ra(X)$, and
we see that $z\in\cl(X)$ if and only if $z\in\cl(Y)$. The result
now follows from Lemma \ref{coclosure}.
\end{proof}

\begin{lemma}\label{cosegment}
Let $M$ be a 3-connected matroid with a 3-separating set $A$. For
some $n\geq 3$, let $\{x_1,x_2,\dots,x_n\}\subseteq E(M)-A$. If
$x_i\in\cl\, (A)$ for all $1\leq i\leq n$, then
$\{x_1,x_2,\dots,x_n\}$ is a segment of $M$. Dually, if
$x_i\in\cl^{\, *} (A)$ for all $1\leq i\leq n$, then
$\{x_1,x_2,\dots,x_n\}$ is a cosegment of $M$.
\end{lemma}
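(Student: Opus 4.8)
The plan is to prove the first statement (the dual statement follows by applying the first to $M^*$, using that $\cl^{\,*}_M = \cl_{M^*}$ and that a cosegment of $M$ is a segment of $M^*$). So assume $A$ is $3$-separating in the $3$-connected matroid $M$, that $n\geq 3$, and that $x_1,\dots,x_n\in\cl(A)\cap(E(M)-A)$. We must show every three-element subset of $\{x_1,\dots,x_n\}$ is a triangle; it suffices to show this for $\{x_1,x_2,x_3\}$, since the hypotheses are symmetric in the $x_i$. First I would record that adding elements of $\cl(A)$ to $A$ does not change the rank of $A$: by induction on $k$, $\ra(A\cup\{x_1,\dots,x_k\}) = \ra(A)$, because at each stage $x_{k+1}\in\cl(A)\subseteq\cl(A\cup\{x_1,\dots,x_k\})$. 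Hence $\ra(A\cup\{x_1,x_2,x_3\}) = \ra(A)$.

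Next I would compute the connectivity of the small side. Let $B = E(M)-A$ and $B' = B - \{x_1,x_2,x_3\}$, so $(A\cup\{x_1,x_2,x_3\})$ and $B'$ partition $E(M)$. Since $\la(A)\leq 2$ and $\ra(A\cup\{x_1,x_2,x_3\}) = \ra(A)$, while $\ra(B')\leq\ra(B)$, we get
\[
\la\bigl(A\cup\{x_1,x_2,x_3\}\bigr) = \ra\bigl(A\cup\{x_1,x_2,x_3\}\bigr) + \ra(B') - \ra(M) \leq \ra(A) + \ra(B) - \ra(M) = \la(A) \leq 2,
\]
so $A\cup\{x_1,x_2,x_3\}$ is $3$-separating, and equivalently $\{x_1,x_2,x_3\}$ is $3$-separating "relative to" $B'$ on the other side. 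Now dualize the counting: I would show $\{x_1,x_2,x_3\}$ has rank $2$ in $M$. If instead $\ra(\{x_1,x_2,x_3\}) = 3$, then since $x_1,x_2,x_3\in\cl(A)$ we would have three rank-one contributions sitting inside $\cl(A)$ that are independent of $A$ only in a limited way — more precisely, I would argue via the submodular/local-connectivity inequality that $\sqcap(A,\{x_1,x_2,x_3\}) = \ra(A)+\ra(\{x_1,x_2,x_3\}) - \ra(A\cup\{x_1,x_2,x_3\}) = \ra(\{x_1,x_2,x_3\})$, i.e. $\{x_1,x_2,x_3\}\subseteq\cl(A)$ forces its whole span to lie in $\cl(A)$. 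Then if $\{x_1,x_2,x_3\}$ were independent, $A\cup\{x_1,x_2,x_3\}$ would be $3$-separating with $|B'|$ and $|A\cup\{x_1,x_2,x_3\}|$ both large, and I would extract a contradiction with $3$-connectivity unless $B'$ is small; handling the case $|B'|\leq 1$ separately (where $E(M)-A$ is itself tiny, contradicting $n\geq 3$ together with $|A|\geq 2$), I conclude $\ra(\{x_1,x_2,x_3\}) = 2$, so $\{x_1,x_2,x_3\}$ is a rank-$2$ set with at least three elements, hence a triangle (no parallel pairs, as $M$ is $3$-connected and simple on these elements). Wait—I should be careful: $M$ $3$-connected does not immediately forbid parallel elements, but a pair of parallel elements is a $2$-separation unless $|E(M)|\leq 3$, so for $|E(M)|\geq 4$ this is fine, and the degenerate cases are ruled out by $n\geq 3$.

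The main obstacle I anticipate is the clean bookkeeping of which side of the separation is "large enough" to invoke $3$-connectivity when I rule out $\ra(\{x_1,x_2,x_3\}) = 3$: I need $|A\cup\{x_1,x_2,x_3\}|\geq 3$ (immediate) \emph{and} $|E(M)-(A\cup\{x_1,x_2,x_3\})|\geq 3$, and the latter can fail. When it fails, $E(M)-A$ has at most $5$ elements and contains all of $x_1,\dots,x_n$, which for larger $n$ is impossible and for $n=3,4,5$ must be argued by hand using that $A$ is $3$-separating and $M$ is $3$-connected, so $\ra(A)$ is not too small and $A$ is coclosed enough that points of $\cl(A)\setminus A$ cannot proliferate. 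A cleaner route, which I would prefer if it goes through, is to avoid case analysis entirely: observe that $\{x_1,\dots,x_n\}\subseteq\cl(A)$ gives $\sqcap(A, X)=\ra(X)$ where $X=\{x_1,\dots,x_n\}$, combine with $\la(A)\le 2$ to bound $\ra(X)$ directly, and note $\ra(X)\ge 2$ is forced by simplicity once $|E(M)|$ is large — yielding $\ra(X)=2$ and hence that $X$ is a segment in one stroke. The dual statement is then immediate by the remark at the start.
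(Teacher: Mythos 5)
Your preferred ``cleaner route'' at the end is exactly the paper's proof: from $x_i\in\cl(A)$ one gets $\ra(A\cup X)=\ra(A)$, hence $\ra(X)=\sqcap(A,X)\le\sqcap(A,E(M)-A)=\la(A)\le 2$ (the paper phrases this via submodularity of the rank function), and simplicity of a 3-connected matroid finishes it. The longer first attempt, with its case analysis on whether $|E(M)-(A\cup\{x_1,x_2,x_3\})|\ge 3$, is unnecessary --- the local-connectivity bound needs no appeal to 3-connectivity of the separation $(A\cup X,B')$ at all --- so you were right to discard it.
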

\begin{proof}
We prove only the first part, as the second part follows by
duality. Suppose that $x_i\in\cl\, (A)$ for all $1\leq i\leq n$.
Then $\ra(A\cup\{x_1,x_2,\dots,x_n\})=\ra(A)$. By the
submodularity of the rank function, we have
$\ra(A\cup\{x_1,x_2,\dots,x_n\})+\ra(E(M)-A)\geq \ra
((A\cup\{x_1,x_2,\dots,x_n\})\cap
(E(M)-A))+\ra((A\cup\{x_1,x_2,\dots,x_n\})\cup (E(M)-A))$, which
implies that $\ra(A)+\ra(E(M)-A)\geq
\ra(\{x_1,x_2,\dots,x_n\})+\ra(M)$. Now,
$\lambda(A)=\ra(A)+\ra(E(M)-A)-\ra(M)=2$ since $A$ is
3-separating, giving $\ra(\{x_1,x_2,\dots,x_n\})\leq 2$. Since $M$
is 3-connected and $n\geq 3$, we conclude that
$\{x_1,x_2,\dots,x_n\}$ is a segment of $M$.
\end{proof}

The following is well known and straight forward.
\begin{lemma}\label{cosegment2}
Let $M$ be a 3-connected matroid with a cosegment $D$ such that
$|D|\geq 4$. Then for all $d\in D$, $M/d$ is 3-connected.
\end{lemma}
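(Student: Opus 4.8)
The plan is to show that $M/d$ has no $1$- or $2$-separation, working directly with the connectivity function. Let $D$ be a cosegment with $|D| \geq 4$ and fix $d \in D$. Since $M$ is $3$-connected and $|E(M)| \geq |D| \geq 4$, the matroid $M/d$ has at least three elements, so it suffices to rule out $2$-separations of $M/d$ (a $1$-separation of $M/d$ would give a $1$- or $2$-separation of $M$, since $\lambda_{M/d}(A) \geq \lambda_M(A) - 1$ for $d \notin A$). Suppose for contradiction that $(A, B)$ is a $2$-separation of $M/d$, so $\lambda_{M/d}(A) \leq 1$ with $|A|, |B| \geq 2$. Then $\lambda_M(A \cup \{d\}) = \lambda_{M/d}(A) + (\ra(A \cup \{d\}) - \ra(A)) \leq 1 + 1 = 2$, and likewise $\lambda_M(A) \leq \lambda_{M/d}(A) + 1 \leq 2$; since $M$ is $3$-connected, whichever of $A$ and $A\cup\{d\}$ has size $\geq 3$ on both sides must be exactly $3$-separating, and in fact $\lambda_M(A) = \lambda_M(A \cup \{d\}) = 2$ forces $d \notin \cl(A)$, i.e.\ $\ra(A \cup \{d\}) = \ra(A) + 1$, and symmetrically $d \notin \cl(B)$.

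Now I use the cosegment structure. The set $D - \{d\}$ has size $\geq 3$; since $D$ is a cosegment, every three-element subset of $D$ is a triad, so $D - \{d\}$ is itself a cosegment of size $\geq 3$, hence a coindependent-flat-type set lying in $\cl^*(D)$. The key point is that $D - \{d\}$ cannot be split by a low-order separation: because every triple in $D$ is a triad, for any two elements $a, b \in D - \{d\}$ and any third $c \in D - \{d\}$, the triad $\{a,b,c\}$ meets both sides of $(A,B)$ unless $D - \{d\} \subseteq A$ or $D - \{d\} \subseteq B$ (a triad cannot meet a $2$-separating set in exactly one element when that element is not in the coclosure of the complement — more precisely, a triad has at most one element on the "small" side of an exact $3$-separation, but here we have an order-$2$ separation of $M/d$ and must track how $d$ interacts). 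So without loss of generality $D - \{d\} \subseteq A$, and then $|B| \geq 2$ consists of elements outside $D$.

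To finish, I would argue that since $D$ is a cosegment, the cocircuits within $D$ tie $d$ tightly to $A$: specifically $D - \{d\}$ together with the triad structure shows $d \in \cl^*(A)$, equivalently (Lemma~\ref{coclosure}) $d \notin \cl(B)$, which we already have; but more is true. Consider the triad $T = \{d, a_1, a_2\}$ with $a_1, a_2 \in D - \{d\} \subseteq A$. A triad is a cocircuit, so $T$ meets every basis-complement; in $M/d$ this becomes $\{a_1,a_2\}$ being contained in a series class or a small cocircuit, pushing $\lambda_{M/d}(B)$ up. Quantitatively, one shows $\ra^*_{M/d}(B) = \ra^*_M(B)$ and $\ra^*_M(B) + \ra^*_M(A) - \ra^*(M) = \lambda_M(B)$, and the presence of the triad forces $\lambda_{M/d}(A \setminus (D-\{d\}) \cup \{d\}\text{-adjustments})$ to climb, contradicting $\lambda_{M/d}(A) \leq 1$ unless $B$ is contained in a series class — but $M$ is $3$-connected and has no series pairs, so $B$ cannot be a series class of $M/d$ of size $\geq 2$ unless it was already "series-like" via a triad through $d$, and there is only one such element pair available per triad. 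Carefully counting, $|B| \geq 2$ outside $D$ is incompatible with $\lambda_{M/d}(A) \leq 1$.

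The main obstacle I anticipate is the bookkeeping in the last step: translating the "triad through $d$" structure into a clean contradiction with $\lambda_{M/d}(A) \leq 1$. The cleanest route is probably dual: apply Lemma~\ref{cosegment} / the segment-in-a-contraction philosophy to $M^*$, where $D$ becomes a segment of size $\geq 4$ and $d$ becomes a point on a long line, for which it is classical that $M^* \ba d$ (dual to $M/d$) is $3$-connected — a line of size $\geq 4$ in a $3$-connected matroid is "robust" and deleting one point preserves $3$-connectivity. So I would restate everything in the dual: show that if $L$ is a segment with $|L| \geq 4$ in a $3$-connected $M'$, then $M' \ba \ell$ is $3$-connected for all $\ell \in L$, by noting any $2$-separation $(A,B)$ of $M' \ba \ell$ with $L - \{\ell\}$ (size $\geq 3$) split between the sides would violate Lemma~\ref{cosegment} applied suitably, and if $L - \{\ell\}$ lies wholly on one side then $\ell \in \cl(L - \{\ell\}) \subseteq \cl(A)$ (say), so $\lambda_{M'}(A \cup \{\ell\}) = \lambda_{M'}(A) = \lambda_{M' \ba \ell}(A) \leq 1$, contradicting $3$-connectivity of $M'$. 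Dualizing back gives the lemma.
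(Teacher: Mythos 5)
Your final, dualized argument is close to the right idea but has a hole exactly where the work is. You correctly dispose of the case where $L-\{\ell\}$ lies wholly on one side of the $2$-separation, but for the case where $L-\{\ell\}$ is split between $A$ and $B$ you only assert that it ``would violate Lemma~\ref{cosegment} applied suitably''; that lemma produces a segment from elements lying in the closure of a $3$-separating set and does not apply here, and no argument is given. The missing observation is that you never need all of $D-\{d\}$ on one side: since $|D-\{d\}|\geq 3$, by pigeonhole some side, say $A$, contains at least \emph{two} elements $a_1,a_2$ of $D-\{d\}$; then $\{d,a_1,a_2\}$ is a triad, so $d\in\cl^{\,*}_M(A)$, equivalently $d\notin\cl(B)$ by Lemma~\ref{coclosure}, whence $\lambda_M(A\cup\{d\})=\lambda_{M/d}(A)\leq 1$ with $|A\cup\{d\}|,|B|\geq 2$, contradicting $3$-connectivity of $M$. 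This is precisely the paper's two-line proof, and it makes your case distinction --- and all of your second and third paragraphs --- unnecessary.

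Separately, your first paragraph contains a false step: $\lambda_M(A)=\lambda_M(A\cup\{d\})=2$ does \emph{not} force $d\notin\cl(A)$ and $d\notin\cl(B)$. By Lemma~\ref{guts} it only forces $d$ to lie in the closure of both sides or in the coclosure of both sides, and in fact for $(A,B)$ to be a $2$-separation of $M/d$ one needs $d\in\cl(A)\cap\cl(B)$ (Lemma~\ref{contr1}). Had your claim been true you would already be done, since $d\notin\cl(A)\cup\cl(B)$ gives $\lambda_{M/d}(A)=\lambda_M(A)+1\geq 3$; the fact that you then press on through two further paragraphs of inconclusive ``bookkeeping'' that never reach a contradiction is a sign that this claim is neither correct nor actually used. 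The third paragraph in particular (``the presence of the triad forces $\lambda$ to climb \dots carefully counting \dots is incompatible'') is a restatement of what must be proved, not a proof.
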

\begin{proof}
Suppose this is false, and let $(A,B)$ be a 2-separation of $M/d$.
We may assume without loss of generality that $|A\cap D|\geq 2$,
and hence $d\in\cl^*_{M}(A)$. Then $(A\cup\{d\},B)$ is a
2-separation of $M$, a contradiction.
\end{proof}

The next two results are well known, see for example \cite{basis}.
They provide us with information on why a matroid might lose some
level of connectivity upon the contraction of an element. The
second of these lemmas focuses on the case where we contract an
element from a 3-connected matroid and not only do we lose
3-connectivity in the resultant matroid, but we lose
3-connectivity in its simplification as well.
\begin{lemma}\label{contr1}
Let $M$ be a $k$-connected matroid of size $|E(M)|\geq 2k-1$, with
an element $z$ such that $M/z$ is not $k$-connected. Then $M/z$ is
$(k-1)$-connected with at least one $(k-1)$-separation. Moreover,
if $(X,Y)$ is a $(k-1)$-separation of $M/z$ then $(X,Y,\{z\})$ is
a partition of $E(M)$ such that $\la_M(X)=\la_M(Y)=k$, $z\in\cl\,
(X)\cap\cl\, (Y)$, and $|X|,|Y|\geq k-1$.
\end{lemma}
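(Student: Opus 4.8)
The plan is to transfer separations back and forth between $M$ and $M/z$ across the element $z$, using only the rank function. Since $M$ is $k$-connected with $|E(M)|\ge 2k-1\ge 3$ (the case $k=1$ being vacuous, as no matroid has a $0$-separation), $M$ has no loops or coloops, so $z$ is neither, and hence $\ra_{M/z}(S)=\ra_M(S\cup z)-1$ and $\ra(M/z)=\ra(M)-1$ for every $S\subseteq E(M)-z$. A one-line computation from the corank formula (cf.\ Lemma~\ref{coclosure}) then shows that, for the partition $(S,\,E(M)-z-S,\,\{z\})$ of $E(M)$, we have $\lambda_{M/z}(S)=\lambda_M(S)$ when $z\notin\cl_M(S)$ and $\lambda_{M/z}(S)=\lambda_M(S)-1$ when $z\in\cl_M(S)$, and likewise $\lambda_{M/z}(S)$ equals $\lambda_M(S\cup z)$ or $\lambda_M(S\cup z)-1$ according to whether $z\notin\cl_M(E(M)-z-S)$ or $z\in\cl_M(E(M)-z-S)$. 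In particular $\lambda_M(S)\le\lambda_{M/z}(S)+1$ and $\lambda_M(S\cup z)\le\lambda_{M/z}(S)+1$.

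First I would show that $M/z$ is $(k-1)$-connected. Suppose not, so $M/z$ has a $j$-separation $(A,B)$ with $j\le k-2$; then $\lambda_{M/z}(A)\le j-1$ and $|A|,|B|\ge j$, and we may assume $|A|\le|B|$, whence $|B|\ge k-1$ since $|A|+|B|=|E(M)|-1\ge 2k-2$. Now $\lambda_M(A)$ and $\lambda_M(A\cup z)$ are both at most $\lambda_{M/z}(A)+1\le j\le k-2$. If $|A|\ge k-1$, then $(A,\,B\cup z)$ is a partition of $E(M)$ into parts of size at least $k-1$ (indeed $|B\cup z|\ge k$) with $\lambda_M(A)\le k-2$, i.e.\ a $(k-1)$-separation of $M$, contradicting $k$-connectivity. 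If $|A|\le k-2$, put $m=|A|+1\le k-1$; then $\lambda_M(A\cup z)\le j\le|A|=m-1$ while $|A\cup z|=m$ and $|B|=|E(M)|-m\ge 2k-1-m\ge m$, so $(A\cup z,\,B)$ is an $m$-separation of $M$ with $m<k$, again a contradiction. Hence $M/z$ is $(k-1)$-connected, and since $M/z$ is not $k$-connected it has at least one $(k-1)$-separation.

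Next, let $(X,Y)$ be any $(k-1)$-separation of $M/z$. Since $M/z$ is $(k-1)$-connected and $|X|,|Y|\ge k-1$, we have $\lambda_{M/z}(X)=k-2$ exactly. If $z\notin\cl_M(X)$, then $\lambda_M(X)=\lambda_{M/z}(X)=k-2$, so $(X,\,Y\cup z)$ would be a $(k-1)$-separation of $M$ (parts of size at least $k-1$ and at least $k$), which is impossible; hence $z\in\cl_M(X)$, and by the same argument with $X$ and $Y$ interchanged, $z\in\cl_M(Y)$. Thus $z\in\cl_M(X)\cap\cl_M(Y)$. Feeding $z\in\cl_M(X)$ (respectively $z\in\cl_M(Y)$) back into the rank identity of the first paragraph now gives $\lambda_M(X)=\lambda_{M/z}(X)+1$ and $\lambda_M(Y)=\lambda_{M/z}(Y)+1$, that is, $\lambda_M(X)=\lambda_M(Y)=k-1$; together with $|X|,|Y|\ge k-1$ this is the asserted conclusion.

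The only genuinely delicate point is the middle step, ruling out every small separation of $M/z$, and it is exactly there that the hypothesis $|E(M)|\ge 2k-1$ is needed: after re-assigning $z$ to one side of a hypothetical $j$-separation of $M/z$, that bound is what guarantees that both sides of the resulting separation of $M$ remain large enough to contradict the $k$-connectivity of $M$. Everything else is routine rank bookkeeping together with the closure bookkeeping of Lemma~\ref{coclosure}; the only degenerate cases worth a remark are $k=1$ (where the statement is vacuous) and $k=2$ (where one notes that $z$ is neither a loop nor a coloop, since $M$ is $2$-connected with at least three elements).
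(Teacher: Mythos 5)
The paper does not actually prove this lemma; it is stated as ``well known'' with a pointer to \cite{basis}, so there is no in-paper argument to compare against. Your proof is correct and self-contained: the identity $\lambda_{M/z}(S)=\lambda_M(S)-|\,\{z\}\cap\cl_M(S)\,|$ (valid once $z$ is known not to be a loop, which $k$-connectivity with $|E(M)|\geq 3$ gives), together with the two case analyses on where to place $z$, is exactly the standard way to establish this, and your accounting of where the hypothesis $|E(M)|\geq 2k-1$ is used is accurate. One point worth making explicit: you conclude $\lambda_M(X)=\lambda_M(Y)=k-1$, whereas the statement as printed says $\lambda_M(X)=\lambda_M(Y)=k$. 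Under the paper's normalization ($\lambda(A)=\ra(A)+\ra(E(M)-A)-\ra(M)$, so that a $k$-connected matroid has every $k$-separation exact with $\lambda=k-1$), your value is the correct one; the printed ``$=k$'' is an off-by-one slip, as confirmed by the paper's own application in Lemma~\ref{contr2}, where $k=3$ and the conclusion invoked is $\lambda_M(X)=\lambda_M(Y)=2$. So you have proved the intended statement, not a weaker variant.
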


\begin{lemma}\label{contr2}
Let $M$ be a 3-connected matroid with an element $z$ such that
$\si(M/z)$ is not 3-connected. Then $M$ has a partition
$(X,Y,\{z\})$ such that $\la(X)=\la(Y)=2$, $z\in\cl\, (X)\cap
\cl\, (Y)$ and $\ra(X),\ra(Y)\geq 3$.
\end{lemma}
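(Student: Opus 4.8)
The plan is to produce the partition by lifting a $2$-separation of $\si(M/z)$ back up to $E(M)$; the one real point will be to show that $z$ lies in the closure of \emph{both} of the resulting pieces rather than in the coclosure of both. First note that $z$ is not a loop of $M$ (indeed $M$ is $3$-connected and, since $\si(M/z)$ is not $3$-connected, it has a $2$-separation, so $|E(M)|\geq 5$). Because $M/z$ has no loops, a $2$-separation $(A,B)$ of $\si(M/z)$ lifts to a partition $(X,Y)$ of $E(M/z)=E(M)-\{z\}$, where $X$ (respectively $Y$) is the union of the parallel classes of $M/z$ that meet $A$ (respectively $B$); then $\la_{M/z}(X)=\la_{\si(M/z)}(A)\leq 1$, and since $\si(M/z)$ is simple with $|A|,|B|\geq 2$ we get $\ra_{M/z}(X)=\ra_{\si(M/z)}(A)\geq 2$ and $\ra_{M/z}(Y)=\ra_{\si(M/z)}(B)\geq 2$.

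Next I would translate ranks from $M/z$ to $M$. As $z$ is not a loop, $\ra_M(S\cup\{z\})=\ra_{M/z}(S)+1$ for every $S\subseteq E(M)-\{z\}$, and $\ra(M)=\ra(M/z)+1$; substituting these into $\la_{M/z}(X)\leq 1$ yields the key inequality
\[
\ra_M(X\cup\{z\})+\ra_M(Y\cup\{z\})-\ra(M)\leq 2.
\]
I then claim $z\in\cl\,(X)\cap\cl\,(Y)$. If instead $z\notin\cl\,(X)$, then $\ra_M(X\cup\{z\})=\ra_M(X)+1$ and $\ra_M(X)=\ra_{M/z}(X)\geq 2$, so $|X|\geq 2$; the key inequality then forces $\la_M(X)=\ra_M(X)+\ra_M(Y\cup\{z\})-\ra(M)\leq 1$, and since $|Y\cup\{z\}|\geq\ra_M(Y\cup\{z\})=\ra_{M/z}(Y)+1\geq 3$, the pair $(X,Y\cup\{z\})$ is a $1$- or $2$-separation of $M$, contradicting $3$-connectivity. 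Hence $z\in\cl\,(X)$, and by the same argument with the roles of $X$ and $Y$ swapped, $z\in\cl\,(Y)$ as well. (Equivalently one can phrase the discarded alternative through Lemma~\ref{coclosure}, as $z\in\cl^{\,*}(Y)$.)

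Finally, with $z\in\cl\,(X)\cap\cl\,(Y)$ in hand we have $\ra_M(X\cup\{z\})=\ra_M(X)$ and $\ra_M(Y\cup\{z\})=\ra_M(Y)$, so the key inequality becomes $\la_M(X)=\ra_M(X)+\ra_M(Y)-\ra(M)\leq 2$; moreover $\ra_M(X)=\ra_{M/z}(X)+1\geq 3$ and likewise $\ra_M(Y)\geq 3$, whence $|X|,|Y|\geq 3$, so $3$-connectivity of $M$ upgrades this to $\la_M(X)=2$. A short computation using $z\in\cl\,(X)$ gives $\la_M(Y)=\ra_M(Y)+\ra_M(X\cup\{z\})-\ra(M)=\ra_M(Y)+\ra_M(X)-\ra(M)=\la_M(X)=2$, so $(X,Y,\{z\})$ is the required partition. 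The bookkeeping is routine; the one genuine obstacle is the middle step, deciding that $z$ falls into the closure of both sides (and not into the coclosure of both), which is exactly where the rank identities relating $M$ and $M/z$ must be combined with the hypothesis that $M$ itself has no small separation.
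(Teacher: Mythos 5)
Your proof is correct and follows essentially the same route as the paper: lift a 2-separation of $\si(M/z)$ to $M/z$, show $z\in\cl\,(X)\cap\cl\,(Y)$, and translate the rank conditions back to $M$. The only difference is that the paper delegates the middle step (that $\la_M(X)=\la_M(Y)=2$ and $z\in\cl\,(X)\cap\cl\,(Y)$) to its Lemma~\ref{contr1}, which you instead prove inline by the same contradiction argument against 3-connectivity of $M$.
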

\begin{proof}
Firstly, since $\si(M/z)$ is not 3-connected, $\si(M/z)$ has a
2-separation $(X',Y')$ with $\ra_{M/z}(X'),\ra_{M/z}(Y')\geq 2$.
It follows that $M/z$ has a 2-separation $(X,Y)$ with
$\ra_{M/z}(X),\ra_{M/z}(Y)\geq 2$. Hence, we must also have
$\ra(\si(M/z))\geq 3$, implying that $\ra(M)\geq 4$ and thus
$|E(M)|\geq 6$. By Lemma~\ref{contr1}, $\la_M(X)=\la_M(Y)=2$ and
$z\in\cl(X)\cap \cl(Y)$. Hence $\ra_M(X)=\ra_{M/z}(X)+1$ and
$\ra_M(Y)=\ra_{M/z}(Y)+1$, and the result follows.
\end{proof}

We refer to a partition $(X,Y,\{z\})$ in which
$\la(X)=\la(Y)=k-1$, $z\in\cl(X)\cap \cl(Y)$, and
$\ra(X),\ra(Y)\geq k$ as a \textit{vertical $k$-partition}. Note
how this differs slightly from a \textit{vertical $k$-separation},
which is defined in many papers as a $k$-separation $(A,B)$ in
which $\ra(A),\ra(B)\geq k$, see for example \cite{basis}. The
following is a useful tool when considering vertical
$k$-partitions.
\begin{lemma}\label{vertcl}
Let $M$ be a $k$-connected matroid with a vertical $k$-partition
$(X,Y,\{z\})$. Then $(X-\cl\, (Y),\cl\, (Y)-\{z\},\{z\})$ is also
a vertical $k$-partition of $M$.
\end{lemma}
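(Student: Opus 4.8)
The plan is to name the elements that get moved, settle the easy two of the four conditions at once, reduce everything to a single rank equality, and then extract that equality from the $k$-connectivity of $M$. Write $R := X\cap\cl(Y)$ and $S := X-\cl(Y)$, so $X = R\sqcup S$. Since $Y\cup\{z\}\subseteq\cl(Y)$ we have $\cl(Y) = R\sqcup Y\sqcup\{z\}$, and hence the triple in question is $(S,\ R\cup Y,\ \{z\})$, which is plainly a partition of $E(M)$. From $Y\subseteq R\cup Y\subseteq\cl(Y)$ we get at once $\ra(R\cup Y) = \ra(Y)\geq k$, and $z\in\cl(Y)\subseteq\cl(R\cup Y)$; so two of the defining properties of a vertical $k$-partition already hold. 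I would also record the identity
\[
\lambda(X) = \ra(X)+\ra(Y)-\ra(M) = k-1,
\]
valid because $E(M)-X = Y\cup\{z\}$ with $z\in\cl(Y)$, so that $\ra(Y\cup\{z\}) = \ra(Y)$.

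Everything then reduces to the claim that $\ra(S) = \ra(X)$, equivalently $R\subseteq\cl(S)$. Granting it, $S$ and $X$ have the same closure, so $z\in\cl(X) = \cl(S)$ and $\ra(S) = \ra(X)\geq k$; and, using $E(M)-S = \cl(Y)$ and $z\in\cl(S)$ (so $\ra(S\cup\{z\}) = \ra(S) = \ra(X)$), one computes
\[
\lambda(S) = \ra(S)+\ra(\cl(Y))-\ra(M) = \ra(X)+\ra(Y)-\ra(M) = k-1,
\]
and, since $E(M)-(R\cup Y) = S\cup\{z\}$,
\[
\lambda(R\cup Y) = \ra(R\cup Y)+\ra(S\cup\{z\})-\ra(M) = \ra(Y)+\ra(X)-\ra(M) = k-1.
\]
This verifies all four conditions, so the whole lemma rests on the claim.

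Proving $R\subseteq\cl(S)$ is the main obstacle, and I would argue by contradiction. If $S = \emptyset$ then $\cl(Y) = E(M)$, hence $\ra(Y) = \ra(M)$, and the recorded identity gives $\ra(X) = k-1$, contradicting $\ra(X)\geq k$; so assume $S\neq\emptyset$. Suppose $\ra(S)\leq\ra(X)-1$. Using $E(M)-S = \cl(Y)$ and $\ra(\cl(Y)) = \ra(Y)$,
\[
\lambda(S) = \ra(S)+\ra(Y)-\ra(M) \leq \bigl(\ra(X)-1\bigr)+\ra(Y)-\ra(M) = \lambda(X)-1 = k-2 .
\]
Since $|E(M)-S| = |\cl(Y)|\geq\ra(Y)\geq k$ and $M$ is $k$-connected with $\lambda(S)\leq k-2$, the partition $(S, E(M)-S)$ cannot be a $(k-1)$-separation, forcing $|S|\leq k-2$ (which for $k=2$ already contradicts $S\neq\emptyset$). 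Now $\lambda(S)\leq\ra(S)\leq|S|\leq k-1$ while $|E(M)-S|\geq k > |S|$, so $k$-connectivity also prevents $(S, E(M)-S)$ from being an $|S|$-separation; hence $\lambda(S)\geq|S|$, and therefore $\lambda(S) = |S|$. By the identity $\lambda(S) = \ra(S)+\ra^*(S)-|S|$, which follows from the corank formula $\ra^*(S) = |S|+\ra(E(M)-S)-\ra(M)$ used in the proof of Lemma~\ref{coclosure}, this forces $\ra^*(S) = |S|$; that is, $\cl(Y) = E(M)-S$ spans $M$, so $\ra(Y) = \ra(M)$, and once more the recorded identity gives $\ra(X) = k-1$, a contradiction. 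Hence $\ra(S) = \ra(X)$, which would complete the proof. The only delicate point is keeping track of which set is the complement of which when invoking the connectivity function; everything else is a short computation.
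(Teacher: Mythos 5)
Your proof is correct. It rests on the same basic idea as the paper's --- use the $k$-connectivity of $M$ to show that transferring the elements of $X\cap\cl\,(Y)$ to the other side can drop neither the connectivity nor the rank of $X$ --- but the execution is genuinely different. The paper peels off the elements of $X\cap\cl\,(Y)$ one at a time: removing a single $x\in X\cap\cl\,(Y)$ changes $\la$ by at most one, so the dichotomy is simply ``$\la(X-\{x\})=k-2$, giving a forbidden $(k-1)$-separation, or $\la(X-\{x\})=k-1$, whence $\ra(X-\{x\})=\ra(X)$ and $z\in\cl\,(X-\{x\})$,'' and the lemma follows by iterating. You move all of $X\cap\cl\,(Y)$ at once and reduce everything to the single equality $\ra(X-\cl\,(Y))=\ra(X)$. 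The price of the one-shot version is that $\la(S)$ can drop by more than one, and when $S=X-\cl\,(Y)$ is small the pair $(S,E(M)-S)$ need not qualify as a $(k-1)$-separation even though $\la(S)\leq k-2$; your secondary argument --- that $k$-connectivity then forbids an $|S|$-separation, forcing $\la(S)=\ra(S)=\ra^*(S)=|S|$, hence $\ra(Y)=\ra(M)$ and the contradiction $\ra(X)=k-1$ --- closes exactly that case, and it is sound. The paper's inductive version is shorter and needs no such case analysis; yours avoids the iteration and isolates the one rank identity on which the whole statement hinges.
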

\begin{proof}
Suppose that $x\in X\cap \cl(Y)$. Then
$\la(X-\{x\})\in\{k-2,k-1\}$. If $\la(X-\{x\})=k-2$ then
$(X-\{x\},Y\cup \{x,z\})$ is a $(k-1)$-separation of $M$,
contradicting that $M$ is $k$-connected. Hence $\la(X-\{x\})=k-1$
which implies that $\ra(X-\{x\})=\ra(X)$ and $z\in\cl(X-\{x\})$.
It follows that $(X-\{x\},Y\cup\{x\},z)$ is a vertical
$k$-partition of $M$. Continuing this process, we see that
$(X-\cl(Y),\cl(Y)-\{z\},\{z\})$ is a vertical $k$-partition of
$M$.
\end{proof}

We now state the version of Bixby's Theorem~\cite{Bixby} that is
most natural for our requirements in this paper.
\begin{theorem}[Bixby's theorem] \label{bixby}
Let $M$ be a 3-connected matroid and let $x\in E(M)$. Then either
$M\ba x$ is 3-connected up to series pairs or $M/x$ is 3-connected
up to parallel pairs.
\end{theorem}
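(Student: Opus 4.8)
The plan is to prove this result (Bixby's Lemma) by contradiction. Since the statement is unchanged on replacing $M$ by $M^{*}$, which interchanges deletion with contraction and ``series'' with ``parallel'', I may assume that \emph{neither} conclusion holds: $\co(M\ba x)$ is not $3$-connected and $\si(M/x)$ is not $3$-connected. The cases with $|E(M)|\le 4$ are trivial, so assume $|E(M)|\ge 5$; then $x$ is neither a loop nor a coloop and $M\ba x$, $M/x$ are both connected. From the failure of the two conclusions I would pick a $2$-separation $(Y_1,Y_2)$ of $M\ba x$ with $\ra^{*}_{M\ba x}(Y_i)\ge 2$ for $i=1,2$, and a $2$-separation $(Z_1,Z_2)$ of $M/x$ with $\ra_{M/x}(Z_i)\ge 2$; both partition $E(M)-\{x\}$.

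Next I would transport these separations into $M$, using the standard identities $\la_M(Y)=\la_{M\ba x}(Y)+1$ exactly when $x\in\cl^{*}(Y)$ (and $\la_M(Y)=\la_{M\ba x}(Y)$ otherwise), and dually $\la_M(Z)=\la_{M/x}(Z)+1$ exactly when $x\in\cl(Z)$. As $M$ is $3$-connected, $\la_M$ is at least $2$ on each of $Y_1,Y_2,Z_1,Z_2$, so these identities give $\la_M(Y_i)=\la_M(Z_i)=2$ together with $x\in\cl^{*}(Y_1)\cap\cl^{*}(Y_2)$ and $x\in\cl(Z_1)\cap\cl(Z_2)$; Lemma~\ref{coclosure} then also gives $x\notin\cl(Y_i)$ and $x\notin\cl^{*}(Z_i)$ for each $i$. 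The analogous manipulation of the rank hypotheses yields $\ra^{*}_M(Y_i)\ge 3$ and $\ra_M(Z_i)\ge 3$, hence $|Y_i|\ge 3$ and $|Z_i|\ge 3$. The combinatorial core is then orthogonality: since $x\in\cl(Z_i)$ there is a circuit through $x$ inside $Z_i\cup\{x\}$ and, since $x\in\cl^{*}(Y_j)$, a cocircuit through $x$ inside $Y_j\cup\{x\}$; a circuit and a cocircuit cannot meet in exactly one element, so these two share an element of $Z_i\cap Y_j$. Thus all four ``quadrants'' $P_{ij}:=Y_i\cap Z_j$ are non-empty.

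Finally I would force a contradiction. The crucial observation is that no quadrant $P_{ij}$ with $|P_{ij}|\ge 2$ can have $\la_{M\ba x}(P_{ij})=1$: such a $P_{ij}$ would be one side of a $2$-separation of $M\ba x$, so $3$-connectivity of $M$ and the first transport identity would force $x\in\cl^{*}_M(P_{ij})$, which is impossible because $P_{ij}\subseteq Z_j$ and $x\notin\cl^{*}_M(Z_j)$. Now uncross $Y_1$ with $Z_1$ and then with $Z_2$ inside $M\ba x$ (plain submodularity of $\la_{M\ba x}$, using $\la_{M\ba x}(Y_1)=1$ and $\la_{M\ba x}(Z_1)=\la_{M\ba x}(Z_2)=2$): one gets $\la_{M\ba x}(P_{11})+\la_{M\ba x}(P_{22})\le 3$ and $\la_{M\ba x}(P_{12})+\la_{M\ba x}(P_{21})\le 3$, and since $M\ba x$ is connected each $\la_{M\ba x}(P_{ij})\ge 1$. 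Hence one of $P_{11},P_{22}$ and one of $P_{12},P_{21}$ has $\la_{M\ba x}$-value $1$, so by the observation is a singleton; but the union of those two singleton quadrants is one of $Y_1,Y_2,Z_1,Z_2$ and has only two elements, contradicting $|Y_i|,|Z_i|\ge 3$. The step I expect to need the most care is precisely this running account of whether $x$ lies in the closure or the coclosure of each set produced, which is what allows $3$-connectivity of $M$ and the orthogonality input to bite; within the paper this is simply invoked from Bixby~\cite{Bixby}.
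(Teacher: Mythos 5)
The paper does not actually prove this statement: it is quoted as a classical result with a citation to Bixby's original article, so there is no in-paper argument to compare against. Your proof is correct, and it is essentially the standard modern proof of Bixby's Lemma: assume both conclusions fail, take a $2$-separation $(Y_1,Y_2)$ of $M\ba x$ with both coranks at least $2$ and a $2$-separation $(Z_1,Z_2)$ of $M/x$ with both ranks at least $2$, transport them into $M$ to get $\la_M(Y_i)=\la_M(Z_i)=2$ with $x\in\cl^{*}(Y_i)$, $x\in\cl(Z_i)$, $x\notin\cl^{*}(Z_i)$ and $|Y_i|,|Z_i|\geq 3$, use orthogonality of a circuit in $Z_j\cup\{x\}$ with a cocircuit in $Y_i\cup\{x\}$ to make all four quadrants nonempty, and then uncross in $M\ba x$ to force one quadrant from each complementary pair to have connectivity $1$ and hence (by your key observation, which correctly exploits that $x\notin\cl^{*}_M(Z_j)$) to be a singleton, so that some $Y_i$ or $Z_j$ has only two elements. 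The points needing care --- that failure of each conclusion really yields a non-minimal $2$-separation, the closure/coclosure bookkeeping via Lemma~\ref{coclosure}, and the connectedness of $M\ba x$ needed for $\la_{M\ba x}(P_{ij})\geq 1$ --- are all handled or are routine for $|E(M)|\geq 5$.
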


We now discuss the important concept of local connectivity. The
\textit{local connectivity function} of a matroid $M$ is defined
on pairs of subsets of $E(M)$ as
$\sqcap(A,B)=\ra(A)+\ra(B)-\ra(A\cup B)$. Note that we do not
require $A$ and $B$ to be disjoint. It is helpful to think of
local connectivity as the connectivity between $A$ and $B$ in the
matroid $M|_{(A\cup B)}$. A good introduction to the local
connectivity function can be found in Oxley, Semple, \& Whittle
\cite{flowers}. The following two results on local connectivity
appear in \cite{flowers}.

\begin{lemma}\label{flowers1}
Let $(X,Y,Z)$ be an exact 3-partition of the 3-connected matroid
$M$. Then $\sqcap(X,Y)=\sqcap(X,Z)=\sqcap(Y,Z)$.
\end{lemma}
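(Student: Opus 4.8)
The plan is to prove the slightly more precise statement that each of the three local connectivities equals the common value $\ra(X)+\ra(Y)+\ra(Z)-\ra(M)-2$; the asserted equalities follow at once. The only feature of the hypothesis I will actually use is that $\la(X)=\la(Y)=\la(Z)=2$, which is immediate from the fact that $(X,Y,Z)$ is an \emph{exact} $3$-partition, i.e.\ each part is exactly $3$-separating.

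First I would record the three rank identities that come from the connectivity function. Since $(X,Y,Z)$ is a partition of $E(M)$, we have $E(M)-Z=X\cup Y$, and therefore
\[
\la(Z)=\ra(Z)+\ra(X\cup Y)-\ra(M)=2 ,
\]
which rearranges to $\ra(X\cup Y)=\ra(M)+2-\ra(Z)$. The two symmetric computations, using $E(M)-Y=X\cup Z$ and $E(M)-X=Y\cup Z$, give $\ra(X\cup Z)=\ra(M)+2-\ra(Y)$ and $\ra(Y\cup Z)=\ra(M)+2-\ra(X)$.

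Next I would substitute these into the definition $\sqcap(A,B)=\ra(A)+\ra(B)-\ra(A\cup B)$. This yields
\[
\sqcap(X,Y)=\ra(X)+\ra(Y)-\ra(X\cup Y)=\ra(X)+\ra(Y)+\ra(Z)-\ra(M)-2 ,
\]
and the analogous substitutions show that $\sqcap(X,Z)$ and $\sqcap(Y,Z)$ are equal to the same quantity. Hence $\sqcap(X,Y)=\sqcap(X,Z)=\sqcap(Y,Z)$, as required.

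There is essentially no obstacle in this argument: it is a short manipulation of the rank function, and the only point needing care is the bookkeeping of which of the three parts is serving as the complement of the union of the other two in each application of $\la$. In particular, neither submodularity of $\la$ nor of $\ra$ is needed, and $3$-connectedness is used only insofar as it is part of the standing context in which exact $3$-partitions arise; the identity itself depends only on $\la(X)=\la(Y)=\la(Z)=2$.
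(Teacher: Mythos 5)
Your proof is correct. The paper gives no proof of this lemma at all---it is quoted from Oxley, Semple, and Whittle \cite{flowers}---and your direct computation (using only that $\la(X)=\la(Y)=\la(Z)=2$ to express each $\ra(A\cup B)$ in terms of $\ra(M)$ and the rank of the third part, so that each $\sqcap$ equals the symmetric quantity $\ra(X)+\ra(Y)+\ra(Z)-\ra(M)-2$) is exactly the standard argument for it; your observation that $3$-connectivity plays no role in the identity itself is also accurate.
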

\begin{lemma}\label{flowers2-}
Let $X$ and $Y$ be subsets of $E(M)$, with $X'\subseteq X$ and
$Y'\subseteq Y$. Then $\sqcap(X',Y')\leq \sqcap(X,Y)$.
\end{lemma}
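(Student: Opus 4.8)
The statement to prove is Lemma~\ref{flowers2-}: for $X'\subseteq X$ and $Y'\subseteq Y$ we have $\sqcap(X',Y')\leq\sqcap(X,Y)$. By definition $\sqcap(A,B)=\ra(A)+\ra(B)-\ra(A\cup B)$, so this is a pure rank-function inequality and the natural engine is submodularity of $\ra$. The plan is to prove the inequality in two stages: first enlarge the second coordinate from $Y'$ up to $Y$ while keeping the first coordinate fixed at $X'$, obtaining $\sqcap(X',Y')\leq\sqcap(X',Y)$; then enlarge the first coordinate from $X'$ up to $X$ with the second coordinate fixed at $Y$, obtaining $\sqcap(X',Y)\leq\sqcap(X,Y)$. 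Chaining the two gives the result. By symmetry of $\sqcap$ in its two arguments, the two stages are the same statement, so it suffices to prove: if $B'\subseteq B$ then $\sqcap(A,B')\leq\sqcap(A,B)$ for any $A$.

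\textbf{The one inequality to establish.} Fix $A$ and $B'\subseteq B$. We must show
\[
\ra(A)+\ra(B')-\ra(A\cup B')\;\leq\;\ra(A)+\ra(B)-\ra(A\cup B),
\]
i.e. after cancelling $\ra(A)$,
\[
\ra(A\cup B)-\ra(B)\;\leq\;\ra(A\cup B')-\ra(B').
\]
This says that the rank increment gained by adjoining $A$ to a set cannot increase when the set grows from $B'$ to $B$; it is exactly the ``diminishing returns'' property of the matroid rank function. It follows from submodularity applied to the pair $\{A\cup B',\,B\}$: since $(A\cup B')\cup B=A\cup B$ and $(A\cup B')\cap B\supseteq B'$ (indeed $(A\cup B')\cap B=(A\cap B)\cup B'\supseteq B'$), monotonicity of $\ra$ gives $\ra((A\cup B')\cap B)\geq\ra(B')$, and submodularity gives
\[
\ra(A\cup B')+\ra(B)\;\geq\;\ra(A\cup B)+\ra((A\cup B')\cap B)\;\geq\;\ra(A\cup B)+\ra(B'),
\]
which rearranges to precisely the displayed inequality. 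That closes the stage.

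\textbf{Assembling the proof.} With the stage in hand, apply it first with $A:=X'$, $B:=Y$, $B':=Y'$ to get $\sqcap(X',Y')\leq\sqcap(X',Y)$; then use $\sqcap(X',Y)=\sqcap(Y,X')$ and apply the stage again with $A:=Y$, $B:=X$, $B':=X'$ to get $\sqcap(Y,X')\leq\sqcap(Y,X)=\sqcap(X,Y)$. Combining, $\sqcap(X',Y')\leq\sqcap(X,Y)$, as required. There is no real obstacle here: the whole argument is two invocations of submodularity together with monotonicity of $\ra$, and the only point demanding a moment's care is the set-theoretic identity $(A\cup B')\cap B=(A\cap B)\cup B'$, which holds because $B'\subseteq B$. (Alternatively, one can prove the stage even more cheaply by noting it is equivalent, via the corank formula or directly, to $\sqcap_M(A,B)=\lambda_{M|(A\cup B)}(B)$ and then citing the monotonicity of restricted connectivity; but the submodularity computation above is self-contained and shorter.)
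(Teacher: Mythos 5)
Your proof is correct. Note that the paper itself gives no proof of Lemma~\ref{flowers2-}: it is quoted from Oxley, Semple and Whittle \cite{flowers}, so there is no in-paper argument to compare against. Your argument is the standard one and it is sound: reducing to the one-sided statement $\sqcap(A,B')\leq\sqcap(A,B)$ for $B'\subseteq B$ via the symmetry of $\sqcap$, and then deriving the diminishing-returns inequality $\ra(A\cup B)-\ra(B)\leq\ra(A\cup B')-\ra(B')$ from submodularity applied to the pair $A\cup B'$ and $B$, using $(A\cup B')\cup B=A\cup B$ and $(A\cup B')\cap B=(A\cap B)\cup B'\supseteq B'$ together with monotonicity of $\ra$. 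Both invocations and the final chaining $\sqcap(X',Y')\leq\sqcap(X',Y)\leq\sqcap(X,Y)$ are carried out correctly, so your write-up supplies a complete, self-contained justification of exactly the statement the paper uses.
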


At this point, we use Lemma~\ref{flowers2-} to prove the
following.
\begin{lemma}\label{flowers2}
Let $X$ and $Y$ be disjoint subsets of $E(M)$, with $X'\subseteq
X$, and such that $\sqcap(X',Y)=\sqcap(X,Y)$. If there exists
$y\in Y$ such that $y\in\cl\, (X)$, then $y\in\cl\, (X')$.
\end{lemma}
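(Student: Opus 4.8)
The plan is to show that adjoining $y$ to $X'$ leaves its rank unchanged, by sandwiching the quantity $\sqcap(X'\cup\{y\},Y)$ between $\sqcap(X',Y)$ and $\sqcap(X\cup\{y\},Y)$ and observing that these two outer quantities are equal. First I would dispose of the trivial case in which $y$ is a loop of $M$, since then $y\in\cl\,(A)$ for every $A\subseteq E(M)$; so assume $y$ is not a loop.

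The first step is the computation $\sqcap(X\cup\{y\},Y)=\sqcap(X,Y)$. Since $y\in\cl\,(X)$ we have $\ra(X\cup\{y\})=\ra(X)$, and since $\cl\,(X)\subseteq\cl\,(X\cup Y)$ we also have $\ra(X\cup Y\cup\{y\})=\ra(X\cup Y)$; substituting these two equalities into the definition of $\sqcap$ gives the claim. Combining this with the hypothesis $\sqcap(X',Y)=\sqcap(X,Y)$, we obtain $\sqcap(X\cup\{y\},Y)=\sqcap(X',Y)$.

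Next I would apply Lemma~\ref{flowers2-} twice, choosing the ``large'' and ``small'' first arguments appropriately. Since $X'\subseteq X'\cup\{y\}$, it gives $\sqcap(X',Y)\leq\sqcap(X'\cup\{y\},Y)$; and since $X'\cup\{y\}\subseteq X\cup\{y\}$, it gives $\sqcap(X'\cup\{y\},Y)\leq\sqcap(X\cup\{y\},Y)$. Chaining these with the equality from the previous step forces $\sqcap(X'\cup\{y\},Y)=\sqcap(X',Y)$. Finally, because $y\in Y$ we have $(X'\cup\{y\})\cup Y=X'\cup Y$, so that $\sqcap(X'\cup\{y\},Y)-\sqcap(X',Y)=\ra(X'\cup\{y\})-\ra(X')$; this difference being $0$ yields $\ra(X'\cup\{y\})=\ra(X')$, i.e.\ $y\in\cl\,(X')$, as required.

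The argument is short, and the only place that needs a little care is the bookkeeping in the first step --- checking that $y\in\cl\,(X)$ really does collapse the ranks of both $X\cup\{y\}$ and $X\cup Y\cup\{y\}$ --- together with remembering that $X$ and $Y$, hence $X'$ and $Y$, are disjoint, which is exactly what makes $(X'\cup\{y\})\cup Y=X'\cup Y$ and lets the final rank difference come out cleanly. I do not anticipate a genuine obstacle; the monotonicity in Lemma~\ref{flowers2-} does all the real work.
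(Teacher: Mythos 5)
Your argument is correct and is essentially identical to the paper's proof: both compute $\sqcap(X\cup\{y\},Y)=\sqcap(X,Y)=\sqcap(X',Y)$ from $y\in\cl\,(X)$, sandwich $\sqcap(X'\cup\{y\},Y)$ using Lemma~\ref{flowers2-}, and cancel terms to get $\ra(X'\cup\{y\})=\ra(X')$. The preliminary discussion of the loop case is unnecessary (the general argument already covers it) but harmless.
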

\begin{proof}
Firstly, since $y\in\cl(X)$, we have
\begin{align*} \sqcap(X\cup\{y\},
Y)&=\ra(X\cup\{y\})+\ra(Y)-\ra(X\cup\{y\}\cup Y)\\
&=\ra(X)+\ra(Y)-\ra(X\cup Y)\\
&=\sqcap(X,Y)\\
&=\sqcap(X',Y).
\end{align*}
By Lemma~\ref{flowers2-}, we have $\sqcap(X'\cup\{y\}, Y)\leq
\sqcap(X\cup\{y\},Y)=\sqcap(X',Y)$. Lemma~\ref{flowers2-} also
gives $\sqcap(X',Y)\leq \sqcap(X'\cup\{y\},Y)$, therefore we may
deduce that $\sqcap(X'\cup\{y\},Y)=\sqcap(X',Y)$. It follows that
$\ra(X'\cup\{y\})+\ra(Y)-\ra(X'\cup\{y\}\cup
Y)=\ra(X')+\ra(Y)-\ra(X'\cup Y)$, and by cancelling terms, we
obtain $\ra(X'\cup \{y\})=\ra(X')$. The result now follows.
\end{proof}

\section{Some useful lemmas.}
The purpose of this section is to prove some ``larger'' lemmas
including Theorem~\ref{main2}. Most of the lemmas of this section
are specific to our problem, however Theorem~\ref{main2} and
Lemma~\ref{segment-1} may be applicable to a number of far more
general settings.

We begin by generating a lower bound on the size of a matroid that
has a hyperplane from which contraction of any element creates a
vertical 2-separation.

\begin{lemma}\label{size}
Let $M$ be a 3-connected matroid with a hyperplane $H$, such that
for all $h\in H$, $si(M/h)$ is not 3-connected. Then $|E(M)|\geq
7$.
\end{lemma}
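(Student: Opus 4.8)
The plan is to establish the bound by a direct size count, using the structural information that Lemma~\ref{contr2} gives us about each element of the hyperplane. First I would pick any element $h\in H$. Since $\si(M/h)$ is not 3-connected, Lemma~\ref{contr2} yields a partition $(X,Y,\{h\})$ of $E(M)$ with $\la(X)=\la(Y)=2$, $h\in\cl(X)\cap\cl(Y)$, and $\ra(X),\ra(Y)\geq 3$. The immediate consequence is $\ra(M)\geq 4$ (since $\ra(M)\geq\ra(X\cup Y)=\ra(X)+\ra(Y)-\sqcap(X,Y)$ and $\sqcap(X,Y)\leq\la(X)+1=2$... more carefully, $h\in\cl(X)$ forces $\ra(M)=\ra(X\cup Y\cup\{h\})=\ra(X\cup Y)$, and $\la(X)=2$ gives $\ra(X)+\ra(Y\cup\{h\})-\ra(M)=2$ with $\ra(Y\cup\{h\})=\ra(Y)$, so $\ra(M)=\ra(X)+\ra(Y)-2\geq 3+3-2=4$).

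Next I would leverage the hyperplane. Since $H$ is a hyperplane, $\ra(H)=\ra(M)-1\geq 3$, and $E(M)-H$ is a cocircuit, so $|E(M)-H|\geq\ra^*(M)$... that is not quite the right inequality either; rather $E(M)-H$ being a cocircuit of a 3-connected matroid has size at least $2$ unless $M$ is very small, and in fact 3-connectivity of $M$ gives cogirth at least $\min\{3,\ra^*(M)+1\}$ type bounds. The cleaner route: $\ra(M)\geq 4$ already forces $|E(M)|\geq\ra(M)+\ra^*(M)\geq 4+2$ only if $\ra^*(M)\geq 2$, which holds since a 3-connected matroid on $\ra(M)\geq 4$ elements cannot have corank $1$ (corank $0$ or $1$ would make it non-3-connected or a small exceptional case). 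So $|E(M)|\geq 6$. To squeeze out the last element I would argue that $|E(M)|=6$ forces $\ra(M)=4$, $\ra^*(M)=2$, meaning $E(M)-H$ is a single cocircuit partitioning into... with $|H|=\ra(M)-1+$(corank within $H$); since $\ra(H)=3$ and $M$ is 3-connected with $6$ elements, $H$ must be a rank-$3$ hyperplane on $4$ or $5$ elements. A short case analysis on the $6$-element 3-connected matroids of rank $4$ (equivalently, by duality, rank-$2$ matroids on $6$ elements, i.e.\ $U_{2,6}$ and its like — but $U_{2,6}$ has no suitable hyperplane structure because contracting any element of a large line behaves differently) should eliminate this case.

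Actually, the most robust approach I would take is to avoid delicate casework: apply Lemma~\ref{contr2} to get $\ra(X),\ra(Y)\geq 3$ and $\ra(M)=\ra(X)+\ra(Y)-2$. Since $h\in\cl(X)$, $h$ is not a coloop, and $X,Y$ are both closure-rich; because $M$ is 3-connected, neither $X$ nor $Y$ can be independent of size $3$ sitting inside a larger structure without creating a series or parallel class unless $|X|,|Y|\geq 3$. Combined with the fact that $H$ is a hyperplane and hence meets both $X$ and $Y$ substantially, and that $E(M)-H$ is a cocircuit (so $h\notin E(M)-H$ since $h\in H$, giving $E(M)-H\subseteq X\cup Y$ with $E(M)-H$ spanning in $M^*$), I would count: $|E(M)|=|X|+|Y|+1\geq 3+3+1=7$, provided I can show $|X|,|Y|\geq 3$. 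The crux is exactly this: ruling out $|X|=2$ or $|Y|=2$. If $|X|=2$ with $\ra(X)=2$ then $X$ together with $h$ (which is in $\cl(X)$) is a triangle, and then $X$ being $2$-separating with $|X|=2$ is a series/parallel obstruction relative to 3-connectivity of $M$ unless handled — this is where I expect the main obstacle to lie, and I would resolve it by noting that $\la_M(X)=2$ with $|X|=2$ contradicts $M$ being 3-connected directly (a 3-connected matroid on $\geq 6$ elements has no $2$-element $2$-separating set... wait, $\la(X)\leq 1$ is what a $2$-separation needs, and $\la(X)=2$ with $|X|=2$ is fine), so instead I would use that $\ra(X)\geq 3$ forces $|X|\geq 3$ outright, and symmetrically $|Y|\geq 3$, giving $|E(M)|\geq 7$ cleanly. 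Thus the real content is just the rank bound from Lemma~\ref{contr2}, and the size bound follows since $\ra(X)\geq 3\Rightarrow|X|\geq 3$.
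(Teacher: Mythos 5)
Your final argument is correct and is exactly the paper's proof: Lemma~\ref{contr2} gives a partition $(X,Y,\{h\})$ with $\ra(X),\ra(Y)\geq 3$, hence $|X|,|Y|\geq 3$ and $|E(M)|\geq 3+3+1=7$. The various detours through corank bounds and six-element case analysis are unnecessary; the one-line rank-to-size count you end with is all that is needed.
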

\begin{proof}
Let $M$ be such a matroid. By Lemma \ref{contr2}, $M$ has a
vertical 3-partition $(X,Y,\{z\})$. Since $\ra(X),\ra(Y)\geq 3$,
we have $|X|,|Y|\geq 3$. Hence $|E(M)|\geq 3+3+1=7$.
\end{proof}

We next consider what happens if our matroid has a specific type
of 3-separator.
\begin{lemma}\label{segment-1}
Let $M$ be a 3-connected matroid with a 3-separation $(A,B)$ such
that $\ra (A)=3$, and there exists $e$ with $\ra (A-\{e\})=2$ and
$|A-\{e\}-\cl\; (B)|\geq 3$. See Figure~\ref{segpic} for a
geometrical representation of $(A,B)$. Suppose that $\si(M/a)$ is
not 3-connected for all $a\in A-\{e\}$. Then $M$ has a cosegment
$D$ such that $e\in D$, $|D-\{e\}|\geq |A-\{e\}-\cl\; (B)|$, and
$A\cup D$ is 3-separating in $M$.
\end{lemma}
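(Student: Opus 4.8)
The plan is to exploit the segment structure inside $A$. Since $\ra(A-\{e\})=2$ and $M$ is $3$-connected, the set $A-\{e\}$ is a segment, so it contains no triad; yet by hypothesis $\si(M/a)$ is not $3$-connected for every $a\in A-\{e\}$, so by Lemma~\ref{contr2} each such $a$ lies in a vertical $3$-partition $(X_a,Y_a,\{a\})$ with $\ra(X_a),\ra(Y_a)\ge 3$. I would feed these vertical $3$-partitions through the uncrossing machinery (Lemma~\ref{uncrossing}) together with the given $3$-separation $(A,B)$, in order to force each such $a$ into the coclosure of a single well-chosen $3$-separating set. The target cosegment $D$ will then be produced by Lemma~\ref{cosegment} (dual form): once I know that enough elements lie in $\cl^*$ of a fixed $3$-separating set, those elements form a cosegment.

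**Locating the cosegment.** Fix $a\in A-\{e\}-\cl(B)$ (there are at least three such elements). From the vertical $3$-partition of $a$, Lemma~\ref{vertcl} lets me replace it by $(X_a-\cl(Y_a),\,\cl(Y_a)-\{a\},\,\{a\})$, so I may assume $X_a$ is closed-complementary in the relevant sense. The key step is to show that the segment $A-\{e\}$ cannot be split by this vertical $3$-partition in an essential way: if $A-\{e\}$ meets both $X_a$ and $Y_a$, then because a segment of rank $2$ has local connectivity at most $2$ with anything, uncrossing $X_a$ (a $3$-separator, being exactly $2$-separating) with $A$ (also $3$-separating) via Lemma~\ref{uncrossing} yields a new $3$-separator; one side of it will have the form $A'\cup(\text{part of }X_a)$ and I can argue, using $\ra(A-\{e\})=2$ and Lemma~\ref{guts}/Lemma~\ref{coclosure}, that $a$ actually lies in $\cl^*$ of the part of $X_a$ disjoint from $A$. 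Pushing this through for all the $a$'s in $A-\{e\}-\cl(B)$, together with $e$, I expect to land all of them in $\cl^*(W)$ for one fixed set $W$ disjoint from $A$, where $W$ arises as the common ``coguts'' side. Then Lemma~\ref{cosegment} gives that $\{e\}\cup(A-\{e\}-\cl(B))$ extends to a cosegment $D$ inside $(A-\cl(B))\cup W$, and by construction $|D-\{e\}|\ge|A-\{e\}-\cl(B)|$.

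**Showing $A\cup D$ is $3$-separating.** Once $D$ is in hand with $D-A\subseteq \cl^*(\text{something on the }B\text{-side})$, I would compute $\la(A\cup D)$ directly. Write $D=D_A\cup D_B$ with $D_A\subseteq A$ and $D_B\subseteq B$. Since every element of $D_B$ is in the coclosure of a set living on the $B$-side of the partition used to build $D$, and $D$ is a cosegment (so all of its elements behave coherently with respect to coclosure), I can move the elements of $D_B$ across using Lemma~\ref{coclosure}: each $d\in D_B$ satisfies $d\notin\cl(A\cup(B-D_B-\{d\}))$, which lets me peel $D_B$ off the $B$-side without increasing connectivity. Formally this is a short submodularity computation showing $\ra((A\cup D))+\ra(E(M)-(A\cup D))-\ra(M)\le 2$. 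I should double-check the boundary case where $D_B$ is empty (then $A\cup D=A$ and there is nothing to prove) and where $|E(M)-(A\cup D)|<2$, but the rank hypotheses on $(A,B)$ rule out the degenerate situations.

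**Anticipated main obstacle.** The hard part will be the uncrossing argument in the second paragraph: showing that the vertical $3$-partition attached to each $a\in A-\{e\}$ cannot ``genuinely'' separate the segment $A-\{e\}$, and that the resulting coclosure membership is with respect to a \emph{common} set $W$ rather than a different set for each $a$. This requires carefully controlling how $X_a\cap A$ and $Y_a\cap A$ interact with the segment — in particular ruling out that both intersections have rank $2$ — and then arguing that the ``$B$-side residue'' of these partitions coincides. I expect to need Lemma~\ref{flowers1} and Lemma~\ref{flowers2} here to pin down local connectivities and force the coclosure statements to be about one fixed set. The rest of the proof is routine rank bookkeeping.
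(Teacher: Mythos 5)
There is a genuine gap, and it is in the identification of the cosegment $D$. You propose to show that every $a\in A-\{e\}-\cl(B)$, together with $e$, lies in $\cl^*(W)$ for a fixed $3$-separating set $W$ disjoint from $A$, and then to invoke Lemma~\ref{cosegment} to conclude that $\{e\}\cup(A-\{e\}-\cl(B))$ sits inside the cosegment $D$. This cannot work: $A-\{e\}-\cl(B)$ is a subset of the rank-$2$ set $A-\{e\}$ with at least three elements, so every three-element subset of it is a triangle. If such a three-element set were also a triad, it would satisfy $\lambda=2+2-3=1$, giving a $2$-separation of $M$ (its complement clearly has at least two elements here), contradicting $3$-connectivity. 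So these elements cannot lie in a cosegment, and the intermediate claim that they all lie in $\cl^*(W)$ is itself false, since Lemma~\ref{cosegment} would then manufacture exactly that impossible cosegment. Note also that under your reading the conclusion ``$A\cup D$ is $3$-separating'' would be nearly vacuous, which is a sign the target has been misread.

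The cosegment the lemma is after consists of \emph{new} elements living in $B$ (together with $e$), not the elements of the segment themselves: $D=\cl^*(A-\{e\})-(A-\{e\})$. The real work is to show that for each $x\in A-\{e\}-\cl(B)$, any vertical $3$-partition $(X_1,x,X_2)$ normalized so that $e\in X_2$ satisfies $X_2\cap A=\{e\}$ and, crucially, $|X_1\cap B|=1$; the unique element $x'$ of $X_1\cap B$ then lies in $\cl^*(A-\{e\})$ by Lemma~\ref{guts}, and distinct $x,y$ yield distinct $x',y'$. Forcing $|X_1\cap B|=1$ requires uncrossing the partitions attached to two different elements $x,y$ of the segment against each other (not merely against $A$) and deriving a contradiction from all four corner intersections being nonempty; this is the content of the paper's Sublemmas on $|X_1'|$ and $|Y_1'|$, and your outline contains no substitute for it. Your third paragraph on $\la(A\cup D)$ is moot until $D$ is correctly located, though once $D\subseteq\cl^*(A-\{e\})$ is established that part is indeed routine.
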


\begin{figure}
\begin{tikzpicture}
    \draw (0,0)--(3.47,0)--(3.47,2.2)--(0,2.2)--(0,0);
    \draw (-0.6,0)--(4.07,0);
    \draw [rotate=30] (0,0) arc (110:330:2.5cm and 1.2cm);
    \draw (0.5,0.6)--(3,1.9);
    \filldraw[black] (2.4,0.7)circle(3pt)
    (2.5,1.64)circle(3pt) (2,1.38)circle(3pt) (1.5,1.12)circle(3pt);
    \draw (1,0.86)circle(3pt);
    \draw (2.7,0.7)node{$e$} (2.3,1.9)node{$x$}
    (1.8,1.6)node{$y$} (1.3,1.4)node{$z$} (4.8,-1.1)node{$B$}
    (4.8,1.1)node{$A$}; 
\end{tikzpicture}
\caption{A 3-separation $(A,B)$, such that $\ra(A)=3$ and
$A-\{e\}-\cl(B)$ is a segment with at least three elements.}
\label{segpic}
\end{figure}
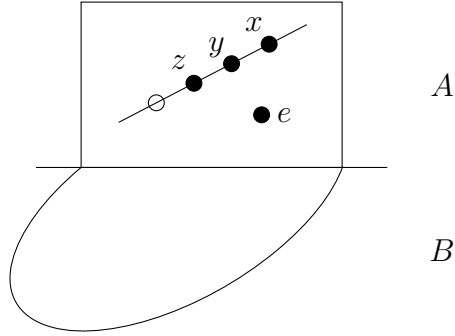

\begin{proof}
Firstly, we may assume that $\cl(A-\{e\})\subseteq A$ (we will use
this assumption in the proof of Sublemma~\ref{sub4}). Now, $M\ba
e$ has the vertical 2-separation $(A-e,B)$, so
$e\in\cl^*(A-\{e\})\cap \cl^*(B)$. Since $\si(M/a)$ is not
3-connected for any $a\in A-\{e\}$, upon the contraction of any
member of $A-\{e\}$, we will obtain a vertical 2-separation, which
corresponds to a vertical 3-partition of the original matroid $M$.
Let $x,y\in A-\{e\}-\cl(B)$, and consider their vertical
3-partitions. Let $(X_1,x,X_2)$ be a vertical 3-partition. Then,
since $x\in\cl(X_i)$, $i=1,2$, and $x\notin\cl(B)$, we see that
$X_i\cap (A-\cl(B))\ne\emptyset$. Assume without loss of
generality that $e\in X_2$. Then $\ra(A\cap X_1)\leq 2$, and as
$\ra(X_1)\geq 3$ we have $X_1\cap B\ne \emptyset$.

\begin{sublemma}\label{sub1}
$X_2\cap A=\{e\}$ and $A\not\subseteq \cl\, (X_i)$, $i=1,2$.
\end{sublemma}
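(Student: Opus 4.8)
The plan is to analyze the vertical $3$-partition $(X_1,x,X_2)$ with $e\in X_2$ more closely, using the structural assumptions to force $X_2$ to contain essentially nothing of $A$ except $e$. First I would recall what we already have: $\ra(A\cap X_1)\le 2$, $X_1\cap B\neq\emptyset$, and $X_i\cap(A-\cl(B))\neq\emptyset$ for both $i$. Since $x\in A-\{e\}-\cl(B)$ and $\ra(A)=3$ with $\ra(A-\{e\})=2$, the set $A-\{e\}$ (or its closure-relevant core) behaves like a rank-$2$ segment, while $e$ sits off that line inside $A$. The key observation to push is that $x\in\cl(X_2)$ together with $x\notin\cl(B)$ forces $X_2$ to meet $A-\cl(B)$ in something that, combined with $e$, would give $X_2\cap A$ rank $\ge 2$ unless $X_2\cap A=\{e\}$; but $\ra(A\cap X_2)\le\ra(A)-\ra(A\cap X_1)+\sqcap(\text{...})$ type bookkeeping, together with $\ra(X_1\cap A)\le 2$ and submodularity across the partition $(A\cap X_1, A\cap X_2, \{x\})$ of $A-$something, should pin down $\ra(A\cap X_2)\le 1$, hence $A\cap X_2$ is contained in a parallel class; since $M$ is $3$-connected and $e\in A\cap X_2$, this yields $A\cap X_2=\{e\}$.

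Second, for the claim $A\not\subseteq\cl(X_i)$: suppose $A\subseteq\cl(X_1)$. Then since $x\in\cl(X_2)$ as well, and $x\in\cl(X_1)$ automatically, nothing is immediately contradictory, so I would instead argue via ranks. If $A\subseteq\cl(X_1)$ then $\ra(X_1\cup A)=\ra(X_1)$, and one can slide all of $A$ into $X_1$: more precisely, $(X_1\cup A, x, X_2-A)=(X_1\cup A,x,X_2-\{e\})$, and I would check this is still a vertical $3$-partition (using $\la$ computations and Lemma~\ref{vertcl}-style reasoning). But then $X_2-\{e\}\subseteq B$ with $x\in\cl(X_2-\{e\})$, and since $x\notin\cl(B)$ this is a contradiction provided $X_2-\{e\}\neq\emptyset$, which holds because $\ra(X_2)\ge 3$ while $\ra(\{e\})=1$. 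The symmetric case $A\subseteq\cl(X_2)$ is handled by the fact that $A\cap X_2=\{e\}$ has rank $1$ while $\ra(A)=3$, so $A\subseteq\cl(X_2)$ would force $\cl(X_2)$ to have rank jumping by $2$ over $e$ purely from $X_2\cap B$; again one slides $A$ into $X_2$ and derives that $x\in\cl(\text{subset of }B)$, contradiction — or more simply, if $A\subseteq\cl(X_2)$ then $\ra(A\cap X_1)$ would need to be $0$ after moving, contradicting $X_1\cap(A-\cl(B))\neq\emptyset$.

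The main obstacle I anticipate is the first part: cleanly extracting $\ra(A\cap X_2)\le 1$ from the rank inequalities. The delicate point is that $x$ lies in $A-\{e\}$ but has been separated out, so the relevant partition of $A$ is $(A\cap X_1)\cup(A\cap X_2)\cup\{x\}$, and I need $\ra(A\cap X_1)+\ra(A\cap X_2)$ to be small enough — here the assumption $\ra(A-\{e\})=2$ and the fact that both $A\cap X_1$ and $A\cap X_2$ meet $A-\cl(B)$ (a rank-$2$ segment once we note $\cl(A-\{e\})\subseteq A$) is what makes it work: two elements of a rank-$2$ line together span the line, so if both sides grabbed two independent points of $A-\cl(B)$ we would overshoot the budget. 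I would phrase this as: $A-\cl(B)\subseteq\cl(A-\{e\})$ which has rank $2$, so $\sqcap(A\cap X_1, A\cap X_2)\ge 1$ whenever both meet $A-\cl(B)$ nontrivially in rank, and then $\ra(A\cap X_1)+\ra(A\cap X_2)\ge\ra(A\cap(X_1\cup X_2))+1=\ra(A-\{x\})+1$; combined with $\ra(A-\{x\})\ge\ra(A)-1=2$ (or $=3$) and $\ra(A\cap X_1)\le 2$ this squeezes $\ra(A\cap X_2)$ down to $1$. Once that squeeze is secured, $3$-connectivity upgrades $A\cap X_2=\{e\}$ and the rest is routine sliding arguments.
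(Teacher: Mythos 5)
There is a genuine gap in your argument for the main claim $X_2\cap A=\{e\}$. The rank bookkeeping runs in the wrong direction: from $\ra(A\cap X_1)+\ra(A\cap X_2)=\ra(A-\{x\})+\sqcap(A\cap X_1,A\cap X_2)$, a lower bound on $\sqcap$ together with the upper bound $\ra(A\cap X_1)\le 2$ yields a \emph{lower} bound on $\ra(A\cap X_2)$, not the upper bound $\ra(A\cap X_2)\le 1$ that you need. Worse, the premise $\sqcap(A\cap X_1,A\cap X_2)\ge 1$ fails in the configuration that actually occurs: $e$ itself lies in $A-\cl(B)$ (because $e\in\cl^*(B)$), but $e\notin\cl(A-\{e\})$, so $X_2$ can meet $A-\cl(B)$ only in the point $e$ off the line, and then $\sqcap(A-\{e,x\},\{e\})=0$. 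More fundamentally, no computation internal to $A$ can exclude the possibility $X_2\cap A=\{e,a\}$ with $a$ on the line $\cl(A-\{e\})$ (which has $\ra(X_2\cap A)=2$, perfectly consistent with all local rank constraints); it must be ruled out globally. The paper's route is to prove $A\not\subseteq\cl(X_2)$ \emph{first}, because otherwise $X_1-A$ (nonempty, as $X_1\cap B\ne\emptyset$) would be a $1$- or $2$-separator of $M$; it then deduces $X_2\cap A=\{e\}$, since any $a\in X_2\cap(A-\{e,x\})$ together with $x\in\cl(X_2)$ would span the rank-$2$ set $A-\{e\}$ (there are no parallel pairs in a $3$-connected matroid of this size), which with $e\in X_2$ forces $A\subseteq\cl(X_2)$. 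Your proof needs this reversal of logical order.

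The second half of your proposal essentially contains the missing ingredient: if $A\subseteq\cl(X_i)$, then Lemma~\ref{vertcl} converts $(X_1,x,X_2)$ into a vertical $3$-partition whose other side $X_{3-i}-\cl(X_i)$ is contained in $B$ yet has $x$ in its closure, contradicting $x\notin\cl(B)$. This sliding argument is a valid alternative to the paper's separator count, and crucially it does not use $X_2\cap A=\{e\}$ at all; run it first for both $i=1,2$, and then obtain $X_2\cap A=\{e\}$ as above. (Two caveats: the paper's version of $A\not\subseteq\cl(X_1)$ actually establishes the stronger fact $e\notin\cl(X_1)$, which is used later in the proof of Lemma~\ref{segment-1} and would need to be recovered separately; and your fallback for the case $A\subseteq\cl(X_2)$, which invokes ``$A\cap X_2=\{e\}$ has rank $1$,'' is circular as written since that is the very first claim being proved.)
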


\begin{proof}
Suppose that $A\subseteq\cl(X_2)$. It follows that if $|X_1-
A|\geq 2$ then $X_1-A$ is 2-separating in $M$, and if $|X_1- A|=1$
then $X_1-A$ is separating in $M$, and both possibilities
contradict the 3-connectivity of $M$. We conclude that
$A\not\subseteq \cl\, (X_2)$.

It now follows that since $e\in X_2$ and $x\in\cl(X_2)$, no member
of $A-\{e,x\}$ can be in $X_2$, otherwise $A$ would be contained
in $\cl(X_2)$. Thus $X_2\cap A=\{e\}$.

Now suppose that $e\in\cl(X_1)$. Then as $X_2\cap A=\{e\}$, we
have $e\notin\cl(X_2-\{e\})$, implying that $X_2-\{e\}$ is a
2-separator of $M$ of size at least two, a contradiction. Thus
$e\notin\cl(X_1)$. Therefore $A\not\subseteq\cl(X_1)$ as required.
\end{proof}

\begin{sublemma}\label{sub2}
$\sqcap(X_2-A,A)=\sqcap(X_2-A,\{e,x\})=1$ and
$\sqcap(X_1-A,A)=\sqcap(X_1-A,A-\{e\})=\begin{cases} 0 &\text{if
$|X_1-A|=1$;}\\ 1 &\text{if $|X_1-A|\geq 2$,}\end{cases}$.
\end{sublemma}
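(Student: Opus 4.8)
The plan is to compute each of the four local connectivities directly from the rank function, using the structure isolated in Sublemma~\ref{sub1}. I begin by recording standing facts. By Lemma~\ref{size} we have $|E(M)|\geq 7$, so $M$ is simple; the standing reduction $\cl(A-\{e\})\subseteq A$ together with $\ra(A-\{e\})=2<\ra(A)$ forces $A-\{e\}$ to be a rank-$2$ flat avoiding $e$. Set $P:=X_1\cap A=A-\{e,x\}$. Then $|P|=|A|-2\geq 2$ (as $|A-\{e\}-\cl(B)|\geq 3$), so simplicity gives $\ra(P)=2$ and $\cl(P)=A-\{e\}\ni x$. From $\la(X_1)=2$ and $x\in\cl(X_2)$ we get $\ra(M)=\ra(X_1)+\ra(X_2)-2$. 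Also $X_i-A=X_i\cap B$, with $X_1=P\cup(X_1\cap B)$ and $X_2=\{e\}\cup(X_2\cap B)$. Finally I establish the auxiliary equality $\sqcap(P,X_2)=1$: since $x\in\cl(P)\cap\cl(X_2)$ we have $\sqcap(P,X_2)=\sqcap(P,X_2\cup\{x\})$, and by Lemma~\ref{flowers2-} this is at least $\sqcap(\{x\},\{x\})=1$, while $\sqcap(P,X_2)=2$ would give $\cl(P)\subseteq\cl(X_2)$, hence $A=\cl(P)\cup\{e\}\subseteq\cl(X_2)$, contradicting Sublemma~\ref{sub1}.

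For the $X_2$ side: since $\cl(P)=A-\{e\}$, one has $(X_2\cap B)\cup A=X_2\cup\cl(P)$, which has rank $\ra(X_2\cup P)=\ra(X_2)+2-\sqcap(P,X_2)=\ra(X_2)+1$, while $(X_2\cap B)\cup\{e,x\}=X_2\cup\{x\}$ has rank $\ra(X_2)$. A short calculation then gives $\sqcap(X_2\cap B,A)=\sqcap(X_2\cap B,\{e,x\})=\ra(X_2\cap B)+2-\ra(X_2)$, so it suffices to show $e\notin\cl(X_2\cap B)$, i.e. $\ra(X_2\cap B)=\ra(X_2)-1$. Now $\ra(X_2\cap B)\geq\ra(X_2)-1\geq 2$ forces $|X_2\cap B|\geq 2$, and $E(M)-(X_2\cap B)=A\cup X_1$ has at least $|A|\geq 4$ elements, so $\la(X_2\cap B)\geq 2$ by $3$-connectivity of $M$; and since $|A\cap X_1|=|P|\geq 2$, Lemma~\ref{uncrossing} shows $A\cup X_1$ is $3$-separating, so $\la(X_2\cap B)=2$. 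Evaluating this $\lambda$ with $\ra(A\cup X_1)=\ra(X_1)+1$ (using $\cl(P)\subseteq\cl(X_1)$ and $e\notin\cl(X_1)$) and $\ra(M)=\ra(X_1)+\ra(X_2)-2$ gives $\ra(X_2\cap B)=\ra(X_2)-1$, as wanted.

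For the $X_1$ side: since $e\notin\cl(X_1)=\cl(X_1\cup\{x\})$ and $(X_1\cap B)\cup(A-\{e\})\subseteq X_1\cup\{x\}$, adjoining $e$ strictly increases the rank of $(X_1\cap B)\cup(A-\{e\})$, so $\sqcap(X_1\cap B,A)=\sqcap(X_1\cap B,A-\{e\})$; and as $(X_1\cap B)\cup(A-\{e\})=X_1\cup\{x\}$ this common value equals $\ra(X_1\cap B)+2-\ra(X_1)$. If $|X_1\cap B|=1$ then $\ra(X_1)\leq\ra(P)+1=3\leq\ra(X_1)$, so $\ra(X_1)=3$ and the value is $0$. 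If $|X_1\cap B|\geq 2$, the value is at most $1$, since value $2$ would mean $\sqcap(P,X_1\cap B)=0$, hence $\cl(P)=A-\{e\}\subseteq\cl(X_1\cap B)\subseteq\cl(B)$, contradicting $|A-\{e\}-\cl(B)|\geq 3$. To rule out the value being $0$, I would assume $\sqcap(P,X_1\cap B)=0$, equivalently $\ra(X_1)=\ra(X_1\cap B)+2$. Since $|X_1\cap B|\geq 2$ and $E(M)-(X_1\cap B)=A\cup(X_2\cap B)$ has at least $|A|\geq 4$ elements, $3$-connectivity gives $\la(X_1\cap B)\geq 2$. But evaluating $\la(X_1\cap B)$ with $\ra(E(M)-(X_1\cap B))=\ra(\cl(P)\cup X_2)=\ra(X_2\cup P)=\ra(X_2)+2-\sqcap(P,X_2)$ and $\ra(M)=\ra(X_1)+\ra(X_2)-2$ yields $\la(X_1\cap B)=2-\sqcap(P,X_2)$, forcing $\sqcap(P,X_2)=0$ and contradicting the auxiliary equality. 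So the value is $1$.

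The one genuinely delicate step is this last contradiction when $|X_1\cap B|\geq 2$: it is exactly where one must combine the $3$-connectivity of $M$ (to force $\la(X_1\cap B)\geq 2$) with the defining feature of the vertical $3$-partition, namely that $x$ lies in the closure of both $X_1$ and $X_2$. Everything else is routine bookkeeping with rank.
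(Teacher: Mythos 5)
Your proof is correct and takes essentially the same route as the paper's: the upper bounds come from $A\not\subseteq\cl\,(X_i)$ (in your version, from $\sqcap(P,X_2)\leq 1$ and $A-\{e\}\not\subseteq\cl\,(B)$) and the lower bounds from applying $3$-connectivity and uncrossing to $\la(X_i-A)$, exactly as in the paper, only written out as fully explicit rank computations. The one slip is typographical: in the step bounding the $X_1$ value above by $1$, ``value $2$ would mean $\sqcap(P,X_1\cap B)=0$'' should read $\sqcap(P,X_1\cap B)=2$, which is what yields $\cl\,(P)\subseteq\cl\,(X_1\cap B)\subseteq\cl\,(B)$.
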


\begin{proof}
Since $X_i-\cl(B)\ne\emptyset$ and $X_i-A\subseteq B$, with
$\sqcap(B,A)=2$ and $A\not\subseteq\cl(X_i)$, we see that
$\sqcap(X_i-A,A)\leq 1$, $i=1,2$. It is easily seen that since
$x\in\cl(X_2)$ with $X_2\cap A=\{e\}$, we must have
$\sqcap(X_2-A,\{e,x\})=\sqcap(X_2-A,A)=1$.

Now, if $|X_1-A|\geq 2$ then $\lambda(X_1-A)\geq 2$. Since
$\lambda(X_1\cup\{x\})=2$ and $\ra(X_2\cup A)=\ra(X_2)+1$, we must
have $\sqcap(X_1-A,A-\{e\})=1$, otherwise we would have
$\lambda(X_1-A)=1$. Now, by Lemma~\ref{flowers1},  $\sqcap
(X_1-A,A)=1$.

If $|X_1-A|=1$, then since $\ra (X_1)\geq 3$ and
$A\not\subseteq\cl(X_1)$, we must have $X_1-A\subseteq
(B-\cl(A))$, thus $\sqcap(X_1-A,A-\{e\})=\sqcap(X_1-A,A)=0$.
\end{proof}

\begin{sublemma}\label{sub3}
$X_1-A$ and $X_2-A$ are 3-separators of $M$.
\end{sublemma}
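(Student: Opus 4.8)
The plan is to verify the sublemma by computing $\la(X_1-A)$ and $\la(X_2-A)$ directly from the rank function and checking that each is at most $2$. The crux is to pin down the ranks $\ra(X_2-A)$ and $\ra(X_1-A)$; once these are known, the two inequalities will fall out of the identity $\la(X_1)=\la(X_2)=2$ together with the rank data supplied by Sublemmas~\ref{sub1} and~\ref{sub2}.

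The key observation is that $e\notin\cl\,(X_2-\{e\})$. Indeed, if $e\in\cl\,(X_2-\{e\})$ then $\cl\,(X_2-\{e\})=\cl\,(X_2)$, and since $x\in\cl\,(X_2)$ this forces $x\in\cl\,(X_2-\{e\})$; but Sublemma~\ref{sub1} gives $X_2\cap A=\{e\}$, so $X_2-\{e\}\subseteq B$ and hence $x\in\cl\,(B)$, contradicting $x\in A-\{e\}-\cl\,(B)$. Thus $\ra(X_2-A)=\ra(X_2-\{e\})=\ra(X_2)-1$. The same device applies to $X_1$: if $\ra(X_1-A)=\ra(X_1)$ then $\cl\,(X_1-A)=\cl\,(X_1)$, which contains $x$, so since $X_1-A=X_1\cap B\subseteq B$ we again get $x\in\cl\,(B)$, a contradiction; hence $\ra(X_1-A)\leq\ra(X_1)-1$. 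I also record the global rank identity: since $x\in\cl\,(X_1)\cap\cl\,(X_2)$ and $(X_1,\{x\},X_2)$ partitions $E(M)$, expanding $\la(X_1)=\ra(X_1)+\ra(X_2\cup\{x\})-\ra(M)=\ra(X_1)+\ra(X_2)-\ra(M)=2$ gives $\ra(M)=\ra(X_1)+\ra(X_2)-2$.

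It then remains to assemble the two separators. For $X_2-A$ we have $E(M)-(X_2-A)=X_1\cup\{x,e\}$; by Sublemma~\ref{sub1}, $e\notin\cl\,(X_1)$, so together with $x\in\cl\,(X_1)$ this gives $\ra(X_1\cup\{x,e\})=\ra(X_1)+1$, whence $\la(X_2-A)=(\ra(X_2)-1)+(\ra(X_1)+1)-\ra(M)=2$. For $X_1-A$ we have $E(M)-(X_1-A)=A\cup X_2$; by Sublemma~\ref{sub2}, $\sqcap(X_2-A,A)=1$, so $\ra(A\cup X_2)=\ra((X_2-A)\cup A)=\ra(X_2-A)+\ra(A)-1=\ra(X_2)+1$, using $\ra(A)=3$, whence $\la(X_1-A)=\ra(X_1-A)+(\ra(X_2)+1)-\ra(M)=\ra(X_1-A)-\ra(X_1)+3\leq 2$. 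Hence both $X_1-A$ and $X_2-A$ are $3$-separating.

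The main obstacle is really only the observation $e\notin\cl\,(X_2-\{e\})$: if instead $\ra(X_2-A)$ equalled $\ra(X_2)$, the same computation would give $\la(X_2-A)=3$, a mere $4$-separator. The other point to handle carefully is the two complement identities $E(M)-(X_2-A)=X_1\cup\{x,e\}$ and $E(M)-(X_1-A)=A\cup X_2$, which rely on $X_2\cap A=\{e\}$ and on $x\in A$ respectively; beyond that, the argument is routine bookkeeping with the rank function.
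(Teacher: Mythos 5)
Your proposal is correct and follows essentially the same route as the paper: the paper's one-line proof rests on exactly the two facts $\ra(X_i\cup A)=\ra(X_i)+1$ and $\ra(X_i-A)<\ra(X_i)$, which you establish in detail (via $e\notin\cl\,(X_2-\{e\})$, $x\notin\cl\,(B)$, and Sublemma~\ref{sub2}) before assembling them with $\ra(X_1)+\ra(X_2)-\ra(M)=2$. The bookkeeping with the complements $X_1\cup A$ and $X_2\cup A$ is accurate.
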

\begin{proof}
This follows immediately from the fact that $\ra(X_i\cup
A)=\ra(X_i)+1$ and $\ra(X_i-A)<\ra(X_i)$, $i=1,2$.
\end{proof}

Let $(Y_1,y,Y_2)$ be a vertical 3-partition of $M$, where $e\in
Y_2$. By symmetry, the same conditions as described in
Sublemmas~\ref{sub1}--\ref{sub3} for $(X_1,x,X_2)$ also hold for
$(Y_1,y,Y_2)$. Let $X_i'=X_i\cap B$ and $Y_i'=Y_i\cap B$, $i=1,2$.

\begin{sublemma}\label{sub4}
Either $|X_1'|=1$ or $|Y_1'|=1$.
\end{sublemma}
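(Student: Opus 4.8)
The plan is to argue by contradiction: suppose that both $|X_1'|\ge 2$ and $|Y_1'|\ge 2$. First I would collect the easy consequences. By Sublemma~\ref{sub2} we have $\sqcap(X_1',A)=\sqcap(X_1',A-\{e\})=1$ and $\sqcap(Y_1',A)=\sqcap(Y_1',A-\{e\})=1$, and each of $X_1',X_2',Y_1',Y_2'$ is exactly $3$-separating in $M$: for $X_2'$ and $Y_2'$ because $\ra(X_2),\ra(Y_2)\ge 3$ while $X_2\cap A=Y_2\cap A=\{e\}$ has rank $1$, forcing $\ra(X_2'),\ra(Y_2')\ge 2$; for $X_1',Y_1'$ from $|X_1'|,|Y_1'|\ge 2$ and $3$-connectivity. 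The one genuinely useful observation here — and the place where the standing assumption $\cl(A-\{e\})\subseteq A$ is used — comes from Lemma~\ref{flowers2} applied with $X=A$, $X'=A-\{e\}$, $Y=X_1'$: its hypotheses hold since $A$ and $X_1'$ are disjoint and $\sqcap(A-\{e\},X_1')=\sqcap(A,X_1')$, so if some $v\in X_1'$ lay in $\cl\,(A)$ then $v\in\cl\,(A-\{e\})\subseteq A$, contradicting $X_1'\subseteq B$. Hence $X_1'\subseteq B-\cl\,(A)$, and symmetrically $Y_1'\subseteq B-\cl\,(A)$.

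Next I would carry out the uncrossing. Since the complement of $A\cup X_1'$ in $M$ is $X_2'$, which is $3$-separating, the set $A\cup X_1'$ is $3$-separating, and likewise $A\cup Y_1'$; as $(A\cup X_1')\cap(A\cup Y_1')\supseteq A$ has at least two elements, Lemma~\ref{uncrossing} gives that $A\cup X_1'\cup Y_1'$ is $3$-separating, with complement $X_2'\cap Y_2'$. Now $\sqcap(X_1',A)=\sqcap(Y_1',A)=1$ yields $\ra(A\cup X_1')=\ra(X_1')+2$ and, via $\sqcap(A,X_1'\cup Y_1')\ge\sqcap(A,X_1')=1$ (Lemma~\ref{flowers2-}), also $\ra(A\cup X_1'\cup Y_1')\le\ra(X_1'\cup Y_1')+2$. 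Combining this with $\ra(B)=\ra(M)-1$ (from $\la(A)=2$, $\ra(A)=3$) and the identity $\ra(X_1'\cup Y_1')+\ra(X_2'\cap Y_2')=\ra(B)+\sqcap(X_1'\cup Y_1',X_2'\cap Y_2')$ (the two sets partition $B$) gives the key inequality $\la(A\cup X_1'\cup Y_1')\le\sqcap(X_1'\cup Y_1',X_2'\cap Y_2')+1$.

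The plan is then to show that the right-hand side is actually $1$, i.e.\ that $\sqcap(X_1'\cup Y_1',X_2'\cap Y_2')=0$, and that $|X_2'\cap Y_2'|\ge 2$; since then $A\cup X_1'\cup Y_1'$ would be a $2$-separating set of $M$ with complement of size at least two, contradicting $3$-connectivity. For the vanishing of the local connectivity I would exploit that $X_1'\cup Y_1'$ lies entirely off $\cl\,(A)$, together with the Sublemma~\ref{sub2} identities, to argue that $\cl\,(X_1'\cup Y_1')$ and $\cl\,(X_2'\cap Y_2')$ cannot overlap inside $\cl\,(B)$. For the size point, the ``crossing'' configuration $X_1'\cap Y_1'=\emptyset$ (which forces $X_1'\subseteq Y_2'$ and $Y_1'\subseteq X_2'$, so that $X_2'\cap Y_2'$ might be small) has to be disposed of separately: there I would instead uncross $X_1$ with $Y_2$, obtaining that $\{x\}\cup Y_1'$ is $3$-separating, and rerun the rank count using $x\notin\cl\,(B)\supseteq\cl\,(Y_1')$ to produce a $2$-separation. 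I expect this last part — keeping simultaneous control of the four intersections $X_i'\cap Y_j'$ and of the (possibly strict) gap between local connectivity and the rank of the intersection of closures — to be the main obstacle.
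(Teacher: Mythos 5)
Your opening computations are sound (the application of Lemma~\ref{flowers2} with the standing assumption $\cl(A-\{e\})\subseteq A$ to get $X_1'\cup Y_1'\subseteq B-\cl\,(A)$, the uncrossing of $A\cup X_1'$ with $A\cup Y_1'$, and the inequality $\la(A\cup X_1'\cup Y_1')\leq \sqcap(X_1'\cup Y_1',X_2'\cap Y_2')+1$ are all correct). But the proof then stops exactly where the real work begins, and the step you defer --- showing $\sqcap(X_1'\cup Y_1',X_2'\cap Y_2')=0$ together with $|X_2'\cap Y_2'|\geq 2$ --- is not just hard, it has no visible mechanism. Writing the identity exactly rather than as an inequality, one gets $\la(A\cup X_1'\cup Y_1')=2+\sqcap(X_1'\cup Y_1',X_2'\cap Y_2')-\sqcap(A,X_1'\cup Y_1')$, so what you must produce is a reason why $\sqcap(X_1'\cup Y_1',X_2'\cap Y_2')<\sqcap(A,X_1'\cup Y_1')$. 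Nothing in the hypotheses controls the corner $X_2'\cap Y_2'$ or its attachment to $X_1'\cup Y_1'$ inside $B$: the facts you have collected are all about local connectivity with $A$, and the suggestion that ``$\cl(X_1'\cup Y_1')$ and $\cl(X_2'\cap Y_2')$ cannot overlap inside $\cl(B)$'' is an assertion that $M|_B$ essentially decomposes, for which no argument is offered. You have flagged this yourself as the main obstacle; it is the entire content of the sublemma.

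The paper's proof works with the opposite corner and uses a geometric fact your proposal never invokes: $A-\{e\}$ has rank $2$ and contains both $x$ and $y$, while $x\in\cl(X_2)$ and $y\in\cl(Y_2)$, so the whole of $A$ collapses into $\cl(X_2\cup Y_2)$. After first checking (by short closure arguments) that all four corners $X_i'\cap Y_j'$ are nonempty --- whence $|X_2\cap Y_2|\geq 2$ and $X_2\cup Y_2$ is $3$-separating by uncrossing --- it follows that $\ra(X_2\cup Y_2\cup A)=\ra(X_2\cup Y_2)$, so the complementary corner $X_1'\cap Y_1'$ satisfies $\la(X_1'\cap Y_1')\leq 2$ with a genuine rank drop coming from deleting the rank-$2$ set $A-\{e\}$; this forces $\la(X_1'\cap Y_1')\leq 1$ (or $=0$ when the corner is a single element, using $\cl(A-\{e\})\subseteq A$), contradicting $3$-connectivity. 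If you want to complete your write-up, you should abandon the $X_2'\cap Y_2'$ corner and redo the count on $X_1'\cap Y_1'$, where the segment structure of $A-\{e\}$ gives you the missing rank inequality.
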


\begin{proof}
Suppose that $|X_i'|\geq 2$ and $|Y_i'|\geq 2$, $i=1,2$. Then
$\sqcap(\{x,y\},X_1')=1$, $\sqcap(\{x,y\}, Y_1')=1$, $\sqcap
(\{e,x\},X_2')=1$ and $\sqcap(\{e,y\},Y_2')=1$ from
Sublemma~\ref{sub2}. We consider how these sets can intersect. It
is evident that $X_2'$ contains some element that is not a member
of $Y_2'$ since $x\notin\cl(Y_2'\cup \{e\})$ and $x\in\cl(X_2'\cup
\{e\})$. Similarly, $Y_2'$ contains some element that is not a
member of $X_2'$. Furthermore, since $e\in\cl(X_2'\cup\{x\})$ but
$e\notin\cl(Y_1'\cup\{x\})=\cl(Y_1)$, we see that $X_2'$ contains
some element not in $Y_1'$. Finally, as $x\in\cl(X_1'\cup\{y\})$
and $x\notin\cl(Y_2'\cup\{y\})$, there must exist some element of
$X_1'$ that is not in $Y_2'$. The result of this is that each of
the sets $X_1'\cap Y_1'$, $X_1'\cap Y_2'$, $X_2'\cap Y_1'$, and
$X_2'\cap Y_2'$ are nonempty. Now, since $e\in X_2\cap Y_2$ and
$X_2'\cap Y_2'\ne\emptyset$, it follows that $|X_2\cap Y_2|\geq
2$, so by uncrossing $X_2\cup Y_2$ is 3-separating in $M$.

Now observe that no member of $A-\{e\}$ is in $X_2\cup Y_2$, but
every member of $A-\{e\}$ is in $\cl(X_2\cup Y_2)$ since
$\{x,y\}\subseteq\cl(X_2\cup Y_2)$. Therefore $\ra(X_2\cup Y_2\cup
A)=\ra(X_2\cup Y_2)$. However, $E(M)-(X_2\cup Y_2\cup A)=X_1'\cap
Y_1'$. Suppose that $|X_1'\cap Y_1'|\geq 2$ then $\ra(X_1'\cap
Y_1')\leq \ra((X_1\cap Y_1)\cup\{x,y\})-1$ implying that
$\lambda(X_1'\cap Y_1')\leq 1$ (because $\lambda(X_2\cup Y_2\cup
A)\leq\lambda(X_2\cup Y_2)\leq 2$), contradicting the connectivity
of $M$. Therefore $|X_1'\cap Y_1'|=1$, but since
$\cl(A-\{e\})\subseteq A$, we have $\ra(X_1'\cap Y_1')\leq
\ra(X_1\cap Y_1)\cup\{x,y\})-2$ implying that $\lambda(X_1'\cap
Y_1')=0$, another contradiction to the connectivity of $M$. This
contradiction shows that it is not possible to have $|X_i'|\geq 2$
and $|Y_i'|\geq 2$, $i=1,2$.
\end{proof}

We may now assume by Sublemma~\ref{sub4} and by symmetry that
$|X_1'|=1$. Recall that $|X_2'|,|Y_2'|\geq 2$ because
$|X_2|,|Y_2|\geq 3$. First note that since $\ra(X_1)\geq 3$ and
$e\notin\cl(X_1)$, it follows that the element $x'$ of $X_1'$ is
not in $\cl(A)$. Then
$x'\in\cl^*(A-\{e\})\cap\cl^*(B\cup\{e\}-\{x'\})$ by
Lemma~\ref{guts}. Now, since $y\in\cl(Y_2'\cup\{e\})$ and
$y\notin\cl(X_2'\cup\{e\})$, it follows that $Y_2'$ must contain
$x'$, and hence $Y_1'\subseteq X_2'$.

\begin{sublemma}\label{sub5}
$|Y_1'|=1$.
\end{sublemma}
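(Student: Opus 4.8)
The plan is to mimic the proof of Sublemma~\ref{sub4}, but now using the extra leverage that we already know $|X_1'|=1$, so that the "symmetric" role of $X$ and $Y$ is broken and we can push harder. Recall we have set $x'\in X_1'$, shown $x'\notin\cl(A)$, deduced $x'\in\cl^*(A-\{e\})\cap\cl^*(B\cup\{e\}-\{x'\})$, and observed that $Y_2'$ must contain $x'$, whence $Y_1'\subseteq X_2'$. So $Y_1'$ is a subset of $X_2'=X_2\cap B$, disjoint from $\{e,x,x'\}$. I want to show $|Y_1'|=1$, i.e.\ $|Y_1\cap B|=1$.

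First I would suppose for contradiction that $|Y_1'|\geq 2$, so that $\lambda(Y_1-A)\geq 2$ and, by Sublemma~\ref{sub2} applied to the $Y$-partition, $\sqcap(Y_1-A,A-\{e\})=\sqcap(Y_1-A,A)=1$, and in particular $\sqcap(\{x,y\},Y_1')=1$. Also, by Sublemma~\ref{sub2} applied to $(X_1,x,X_2)$, $\sqcap(X_2-A,A)=\sqcap(X_2-A,\{e,x\})=1$. Since $Y_1'\subseteq X_2'=X_2-A$, Lemma~\ref{flowers2-} gives $\sqcap(Y_1',\{e,x\})\leq\sqcap(X_2-A,\{e,x\})=1$. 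Now I would look at where $Y_1'$ sits relative to the closures. The key point is that $x\in\cl(X_1)=\cl(X_1'\cup\{y\}\cup\cdots)$; more usefully, $x'\in Y_2'$ and $x'\notin\cl(A)$, so $x'\in\cl^*(B\cup\{e\}-\{x'\})$. Since $Y_1'\subseteq X_2'\subseteq B-\{x'\}$, we can try to locate $x'$ in the coclosure of $Y_2\cup\{e\}$ minus the $Y_1'$-side, and play the same uncrossing game that produced the contradiction in Sublemma~\ref{sub4}, but now with $X_2\cup Y_2$ replaced by an appropriate pair whose complement is forced to be too small.

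Concretely, the step I would aim for is: show that $|X_2\cap Y_2|\geq 2$ again (we have $e$ and at least one more element, since $x'\in X_1'\cap Y_2'$ tells us nothing directly about $X_2\cap Y_2$, so instead I would use that $Y_1'\subseteq X_2'$ forces $X_2\cap Y_1\supseteq Y_1'\ne\emptyset$ and combine with $X_2\cap Y_2\ni e$ — and then argue $X_2\cap Y_2$ must contain a second element because $y\in\cl(Y_2'\cup\{e\})$ but $y\notin\cl(X_1)$, forcing $Y_2'\not\subseteq X_1'$, i.e.\ $Y_2\cap X_2\ne\{e\}$). Having $|X_2\cap Y_2|\geq 2$, uncrossing gives $X_2\cup Y_2$ 3-separating, and as in Sublemma~\ref{sub4}, $A\subseteq\cl(X_2\cup Y_2)$ because $\{x,y\}\subseteq\cl(X_2\cup Y_2)$, so $\ra(X_2\cup Y_2\cup A)=\ra(X_2\cup Y_2)$ and $X_2\cup Y_2\cup A$ is 3-separating. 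Its complement is $X_1'\cap Y_1'$, and using $\cl(A-\{e\})\subseteq A$ (so $x,y$ each add a genuine new rank point over $X_1\cap Y_1$) one computes $\ra(X_1'\cap Y_1')\leq\ra((X_1\cap Y_1)\cup\{x,y\})-2$, forcing $\lambda(X_1'\cap Y_1')\leq 0$ whenever $|X_1'\cap Y_1'|\geq 1$ — but $X_1'\cap Y_1'$ could be empty here since $|X_1'|=1$ and $x'\in Y_2'$. So the real content is to instead take the complement on the other side: consider $Y_1'$ itself (nonempty of size $\geq 2$ by assumption) and show it is $1$-separating, which contradicts 3-connectivity. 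I expect the main obstacle to be bookkeeping exactly which element plays the role of the "surplus" element in $X_2\cup Y_2$ and verifying the rank drop $\ra(Y_1')\leq\ra(\text{something})-2$ cleanly; once the right pair of crossing 3-separators is identified, the submodularity computation is routine and mirrors Sublemma~\ref{sub4}. I would therefore spend most of the writeup pinning down that $x'\in\cl^*(Y_2'\cup\{e\}-\text{(stuff)})$ statement and using Lemma~\ref{flowers2} together with $\sqcap(Y_1',A-\{e\})=1$ to force a second element into $X_2\cap Y_2$, then close with the uncrossing-plus-rank-count contradiction.
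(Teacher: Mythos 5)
There is a genuine gap: you never actually reach a contradiction, and the route you sketch cannot be made to work. The paper's proof of this sublemma is not an uncrossing/rank-count argument at all but a two-line closure argument: if $|Y_1'|\geq 2$ then $\sqcap(Y_1',A-\{e\})=1$ by Sublemma~\ref{sub2}; since $x\notin\cl\,(B)\supseteq\cl\,(Y_1')$ this forces the whole line $A-\{e\}$ (in particular $y$) into $\cl\,(Y_1'\cup\{x\})$; but $Y_1'\subseteq X_2'$ and $x\in\cl\,(X_2)$ give $\cl\,(Y_1'\cup\{x\})\subseteq\cl\,(X_2)$, contradicting $y\notin\cl\,(X_2)$ (which holds because $A\not\subseteq\cl\,(X_2)$ and $\{e,x,y\}$ spans $A$). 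You have all the ingredients on the table --- you even write down $\sqcap(Y_1',A-\{e\})=1$ and the containment $Y_1'\subseteq X_2'$ --- but you do not combine them this way.

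Instead you try to replay the uncrossing computation of Sublemma~\ref{sub4}, and that computation genuinely dies here. You correctly notice that $X_1'\cap Y_1'=\emptyset$ (since $X_1'=\{x'\}$ and $x'\in Y_2'$), so the complement of $X_2\cup Y_2\cup A$ is empty and yields nothing; in fact $E(M)-(X_2\cup Y_2)=A-\{e\}$, which is an honest exact 3-separator, so that uncrossing gives no contradiction either. Your fallback, ``show $Y_1'$ is 1-separating,'' also fails: from $\sqcap(Y_1',A-\{e\})=1$ one gets $\ra(Y_1)=\ra(Y_1')+1$ and hence $\la(Y_1')=2$ in general, so $Y_1'$ is a perfectly legitimate 3-separator (this is exactly Sublemma~\ref{sub3}) and no connectivity violation is available. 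The phrase ``once the right pair of crossing 3-separators is identified, the submodularity computation is routine'' is where the missing idea lives; no such pair exists, and the contradiction must come from closures, not from $\la$.
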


\begin{proof}
Suppose that $|Y_1'|\geq 2$. Then $\sqcap(Y_1',A-\{e\})=1$ by
Sublemma~\ref{sub2}, which implies that
$A-\{e,x\}\subseteq\cl(Y_1'\cup\{x\})$. However
$y\notin\cl(X_2'\cup\{x\})\supseteq\cl(Y_1'\cup\{x\})$, a
contradiction. This implies that $|Y_1'|=1$.
\end{proof}

A similar argument to the one above shows that the element $y'$ of
$Y_1'$ is a member of
$\cl^*(A-\{e\})\cap\cl^*(B\cup\{e\}-\{y'\})$. Also, since $y'\in
Y_2'$, we have $x'\ne y'$.

We may apply symmetric arguments to those above to any pair of
elements $x,y\in A-\{e\}-\cl(B)$, where $(X_1,x,X_2)$ and
$(Y_1,y,Y_2)$ are vertical 3-partitions of $M$ such that $e\in
X_2$ and $e\in Y_2$. These arguments show that
$|X_1-A|=|Y_1-A|=1$, and if $x'\in X_1-A$ and $y'\in Y_1-A$, then
$x'\ne y'$ and $x',y'\in\cl^*(A-\{e\})$. Now, let
$D=\cl^*(A-\{e\})-(A-\{e\})$. Then $D$ contains $e$ and
$\{x',y'\}$ for all $x,y\in A-\{e\}-\cl(B)$. Furthermore,
$|D-\{e\}|\geq |A-\{e\}-\cl(B)|$ since $x'\ne y'$ for all $x,y\in
A-\{e\}-\cl(B)$. We also see that $A\cup D$ is 3-separating by
construction, and that $D$ is a cosegment of $M$, by
Lemma~\ref{cosegment}.
\end{proof}

We may now apply Lemma~\ref{segment-1} to our problem in the
following corollary.

\begin{corollary}\label{segment}
Let $M$ be a 3-connected matroid with a 3-separation $(A,B)$ such
that $\ra\; (A)=3$, and there exists $e$ with $\ra\; (A-\{e\})=2$
and $|A-\{e\}-\cl(B)|\geq 3$. Again, refer to Figure~\ref{segpic}
for a geometrical representation of $(A,B)$. Suppose that $M$ has
a hyperplane $H$ that contains $A-\{e\}$. Then there exists $h\in
H$ such that $\si(M/h)$ is 3-connected.
\end{corollary}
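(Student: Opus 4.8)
The plan is to argue by contradiction: assume $\si(M/h)$ is not 3-connected for every $h\in H$. Since $A-\{e\}\subseteq H$, in particular $\si(M/a)$ is not 3-connected for every $a\in A-\{e\}$, so all the hypotheses of Lemma~\ref{segment-1} are met. Applying that lemma produces a cosegment $D$ of $M$ with $e\in D$, with $|D-\{e\}|\geq |A-\{e\}-\cl(B)|\geq 3$, and with $A\cup D$ 3-separating in $M$. Since $|D|\geq 4$, Lemma~\ref{cosegment2} tells us that $M/d$ is actually 3-connected for every $d\in D$; a fortiori $\si(M/d)$ is 3-connected. So if we can locate an element of $D$ that lies in $H$, we are done.

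The main work, then, is to show that $D\cap H\neq\emptyset$, or more precisely that some element of $D$ is a legitimate choice of $h$. The natural candidate is $e$ itself, but $e$ need not be in $H$, so instead I would look at the elements $x' \in D-\{e\}$ produced in the proof of Lemma~\ref{segment-1}; recall each such $x'$ arises as the unique element of $X_1-A$ for a vertical $3$-partition $(X_1,x,X_2)$ coming from some $x\in A-\{e\}-\cl(B)$, and satisfies $x'\in\cl^*(A-\{e\})$ with $x'\notin\cl(A)$. Since $|D-\{e\}|\geq 3$ and $D$ is a cosegment, $D-\{e\}$ is a cosegment of size at least $3$, hence a triad-rich structure that cannot be contained in a hyperplane: if $D-\{e\}\subseteq H$ then, since a hyperplane is a flat of rank $\ra(M)-1$ whose complement is a cocircuit, and $D-\{e\}$ spans (in the dual sense) too much corank, we would get a contradiction. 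More concretely, $E(M)-H$ is a cocircuit, so $H$ meets every cocircuit-complement trivially; since any three elements of $D$ form a triad (a cocircuit of size $3$), and a cocircuit cannot be contained in a hyperplane, $H$ cannot contain all of any triad inside $D$. Thus $H$ contains at most two elements of $D$, but more usefully, $H$ must \emph{omit} at least one element from each triad of $D$; combined with $|D|\geq 4$ this forces $|D-H|\geq 2$ — still not immediately what we want.

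The cleaner route is to show directly that some $x'\in D-\{e\}$ lies in $H$. I would argue that the $x'$'s, being in $\cl^*(A-\{e\})$ but not in $\cl(A)\supseteq\cl(A-\{e\})$, have the property that $(A-\{e\})\cup\{x'\}$ is coindependent-flavoured; combined with $A-\{e\}\subseteq H$ and the fact that $H$ is a flat of corank $1$, the only obstruction to $x'\in H$ is $x'\in E(M)-H$, i.e. $x'$ lying in the cocircuit $C^*=E(M)-H$. But $C^*$ is a single cocircuit, and the cosegment $D$ has the property that $C^*\cap D$ is a coindependent set in the restriction of $M^*$ to $D$ of size at most $\ra^*_{M^*|D}(D) = ... $; since any three elements of $D$ form a circuit of $M^*$, $D$ has corank $|D|-2$ in $M^*$ viewed appropriately, so $C^*$ can contain at most... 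Here the bookkeeping is the crux. The expected main obstacle is exactly this last step: pinning down how many elements of the cosegment $D$ can lie outside $H$, and hence guaranteeing that at least one of the specially-constructed $x'$ (which we know are in $D-\{e\}$) survives inside $H$. I expect this to come down to: a cosegment of size $\geq 4$ cannot have $\geq |D|-1$ of its elements in a single cocircuit $C^*$, because $C^*\cap D$ must be a union-of-circuits-free set in $M^*|D$ and $M^*|D$ is a rank-$2$ uniform-like matroid on $\geq 4$ points, whose independent sets have size $\leq 2$; so $|C^* \cap D|\le 2$ is false in general but $|C^*\cap D|\neq |D|$ and $|C^*\cap D|\neq|D|-1$, giving $|D-C^*|=|D\cap H|\geq 2$, and then checking that one of the $x'$ is among these — or, failing a clean count, using that there are at least three distinct $x'$ arising from three distinct choices of $x\in A-\{e\}-\cl(B)$ while $H$ omits at most two elements of $D$, so by pigeonhole some $x'\in H$, completing the contradiction since $\si(M/x')$ is 3-connected.
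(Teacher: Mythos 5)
Your overall strategy is the same as the paper's: assume every $a\in A-\{e\}$ has $\si(M/a)$ not 3-connected, invoke Lemma~\ref{segment-1} to get a cosegment $D$ with $e\in D$ and $|D|\geq 4$, note via Lemma~\ref{cosegment2} that $M/d$ is 3-connected for every $d\in D$, and then find an element of $H\cap D$. The gap is precisely at the step you yourself flag as the crux: you never establish $H\cap D\neq\emptyset$. Worse, the tool you reach for is false. A cocircuit \emph{can} be contained in a hyperplane, even in a 3-connected matroid: in $M^*(K_5)$ the triad $\{12,13,23\}$ (a triangle of $K_5$) lies inside the hyperplane complementary to the cocircuit $\{14,15,45\}$. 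So the deductions you base on ``a cocircuit cannot be contained in a hyperplane'' (that $H$ contains at most two elements of $D$, and later that $H$ omits at most two elements of $D$ --- note these two statements also contradict each other) are unjustified, and the concluding pigeonhole rests on the second of them. The detour through the specific elements $x'$ of $D-\{e\}$ is also unnecessary: since Lemma~\ref{cosegment2} applies to \emph{every} element of $D$, any single element of $H\cap D$ finishes the proof.

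The missing step has a short correct proof. Let $C=E(M)-H$, a cocircuit of $M$, i.e.\ a circuit of $M^*$. If $H\cap D=\emptyset$ then $D\subseteq C$. But every 3-element subset of the cosegment $D$ is a triad of $M$, i.e.\ a circuit of $M^*$, and since $|C|\geq|D|\geq 4$ such a triad would be \emph{properly} contained in the circuit $C$, contradicting the minimality of circuits. Hence $H\cap D\neq\emptyset$; taking $h\in H\cap D$ and applying Lemma~\ref{cosegment2} gives that $M/h$ is 3-connected. (The paper's own proof is equally terse at this point --- it asserts that $H$ must meet $D$ ``in order for $H$ to have rank $\ra(M)-1$'' --- but the assertion is true and the argument above is the clean way to see it; your attempt does not supply a valid substitute.)
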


\begin{proof}
Suppose we have a matroid satisfying such conditions, and suppose
that for all $a\in A-\{e\}$, $\si(M/a)$ is not 3-connected. Then
by Lemma~\ref{segment-1}, $e$ is a member of a cosegment $D$ of
size at least four, such that $A\cup D$ is 3-separating in $M$. In
order for $H$ to have a rank of $\ra(M)-1$, $H$ must intersect
$D$. Let $h\in H\cap D$. Then by Lemma~\ref{cosegment2}, $M/h$ is
3-connected.
\end{proof}

The following lemma allows us to choose vertical 3-partitions that
have a certain type of ``minimality'' on one of the large sides of
the partition.

\begin{lemma}\label{minimal} Let $M$
be a 3-connected matroid with a set of elements $J$, such that for
all $j\in J$, $\si(M/j)$ is not 3-connected. Suppose $x\in J$ and
$(X_1,x,X_2)$ is a vertical 3-partition such that for all $y\in
(X_1\cup\{x\})\cap J$, whenever $(Y_1,y,Y_2)$ is a vertical
3-partition with $Y_1\subseteq X_1$ then $Y_1\cap J \ne\emptyset$.
Then there exists $z\in (X_1\cup\{x\})\cap J$ with a vertical
3-partition $(Z_1,z,Z_2)$ such that
\begin{itemize}
\item $Z_1\subseteq X_1$ and $Z_1\cap J\ne\emptyset$, and
\item $Z_2\cup\{z\}$ is closed, and
\item for all $j\in Z_1\cap J$, whenever $(J_1,j,J_2)$ is a
 vertical 3-partition, then $J_1\cap X_2 \ne\emptyset$ and
 $J_2\cap X_2\ne\emptyset$.
\end{itemize}
\end{lemma}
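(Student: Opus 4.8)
The plan is to run a minimality (descent) argument. Consider the family $\mathcal{F}$ of all triples $(z,Z_1,Z_2)$ for which $(Z_1,z,Z_2)$ is a vertical $3$-partition of $M$ with $z\in(X_1\cup\{x\})\cap J$ and $Z_1\subseteq X_1$. Since $(x,X_1,X_2)\in\mathcal{F}$, this family is nonempty, so I may choose a member $(z,Z_1,Z_2)$ of $\mathcal{F}$ with $|Z_1|$ as small as possible. The first bullet is then almost immediate: $Z_1\subseteq X_1$ by definition of $\mathcal{F}$, and applying the hypothesis of the lemma with $y:=z$ and $(Y_1,y,Y_2):=(Z_1,z,Z_2)$ (legitimate since $z\in(X_1\cup\{x\})\cap J$ and $Z_1\subseteq X_1$) gives $Z_1\cap J\ne\emptyset$. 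For the second bullet I invoke Lemma~\ref{vertcl}: the triple $(z,\,Z_1-\cl(Z_2),\,\cl(Z_2)-\{z\})$ again lies in $\mathcal{F}$, since its first coordinate is still $z$ and its middle set is contained in $Z_1\subseteq X_1$; so minimality of $|Z_1|$ forces $Z_1-\cl(Z_2)=Z_1$, and since $z\in\cl(Z_2)$ this yields $\cl(Z_2)=Z_2\cup\{z\}$, i.e.\ $Z_2\cup\{z\}$ is closed.

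The work is in the third bullet, which I prove by contradiction. Suppose $j\in Z_1\cap J$ and $(J_1,j,J_2)$ is a vertical $3$-partition of $M$ with, say, $J_1\cap X_2=\emptyset$ (the case $J_2\cap X_2=\emptyset$ is symmetric after relabelling $J_1\leftrightarrow J_2$). Since $j\in Z_1\subseteq X_1$ we have $j\notin X_2$, hence $X_2\subseteq J_2$; note also $j\ne z$ and $x\notin Z_1$. The distinguished element $x$ may a priori lie in $J_1$, and the first step is to move it out: because $x\in\cl(X_2)\subseteq\cl(J_2)$, a short rank computation (ruling out $\la(J_1-\{x\})=1$ by the $3$-connectivity of $M$, exactly as in the proof of Lemma~\ref{vertcl}) shows $x\in\cl(J_1-\{x\})$, and then one checks that $(J_1-\{x\},\,j,\,J_2\cup\{x\})$ is again a vertical $3$-partition with the same properties. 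So I may assume $x\notin J_1$, and therefore $J_1\subseteq E(M)-X_2-\{x\}=X_1$; thus $(j,J_1,J_2)\in\mathcal{F}$, while $j\in Z_1\setminus J_1$.

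Now I uncross. Since $j\in\cl(J_1)$, the set $J_1\cup\{j\}$ is $3$-separating, and $E(M)-\bigl((J_1\cup\{j\})\cup Z_1\bigr)=J_2\cap(Z_2\cup\{z\})\supseteq X_2$ has at least three elements, so Lemma~\ref{uncrossing} gives that $W:=(J_1\cup\{j\})\cap Z_1=(J_1\cap Z_1)\cup\{j\}$ is a $3$-separating subset of $Z_1$ (automatically contained in $X_1$, and with $x\notin W$). The aim is to show that, after a further appeal to Lemma~\ref{vertcl} and using Lemma~\ref{flowers2} with a suitable second set to deduce $j\in\cl(J_1\cap Z_1)$, the set $W-\{j\}$ is the small side of a vertical $3$-partition at $j$ whose first part is a \emph{proper} subset of $Z_1$; since $j\in(X_1\cup\{x\})\cap J$, such a partition would be a member of $\mathcal{F}$ with strictly smaller first part, contradicting the minimal choice of $Z_1$ and completing the proof.

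I expect the main obstacle to be precisely this last step: extracting from the $3$-separating set $W$ a genuine \emph{vertical} $3$-partition at $j$. One must verify the closure condition $j\in\cl(J_1\cap Z_1)$ and that $\ra(W-\{j\})\ge 3$, keep careful track of which side of $(J_1,j,J_2)$ the element $x$ lands on, and handle the degenerate possibilities $|W|\le 1$ and $\ra(W)=2$ separately (in the latter, $W$ is a short segment-type set inside $Z_1$ and one argues directly from $3$-connectivity). The remaining bookkeeping — the rank equalities needed for the sets in play to be exactly $3$-separating, and the confirmation that minimality is genuinely contradicted — is routine given Lemmas~\ref{uncrossing}, \ref{vertcl}, \ref{flowers2-} and \ref{flowers2}.
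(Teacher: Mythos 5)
Your treatment of the first two bullets is fine: taking $(Z_1,z,Z_2)$ with $|Z_1|$ minimal over your family $\mathcal{F}$ does give $Z_1\cap J\ne\emptyset$ directly from the hypothesis, and Lemma~\ref{vertcl} plus minimality does force $Z_2\cup\{z\}$ to be closed. But the third bullet is where the entire content of the lemma lives, and there you have not given a proof — you have given an aim. The contradiction you need is a member of $\mathcal{F}$ whose first part is strictly smaller than $Z_1$, and your candidate is $J_1\cap Z_1$. Uncrossing does show this set is $3$-separating, but for it to be the small side of a \emph{vertical} $3$-partition at $j$ you need both $j\in\cl(J_1\cap Z_1)$ and $\ra(J_1\cap Z_1)\ge 3$, and neither is guaranteed: nothing prevents $J_1$ from meeting $Z_1$ in one element, or in a rank-$2$ set. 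In the rank-$2$ case your remark that ``one argues directly from $3$-connectivity'' does not lead anywhere — a rank-$2$ set properly inside $Z_1$ is a segment, and segments are entirely compatible with $3$-connectivity, so there is no contradiction to be had there and no smaller member of $\mathcal{F}$ is produced. Your proposed use of Lemma~\ref{flowers2} ``with a suitable second set'' to get $j\in\cl(J_1\cap Z_1)$ is likewise unspecified; one would need $\sqcap(J_1\cap Z_1,Y)=\sqcap(J_1,Y)$ for a set $Y$ with $j\in\cl(J_1)\cap Y$, and establishing such an equality is exactly the kind of delicate uncrossing analysis that occupies the proof of Theorem~\ref{biglem}. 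So the minimal-cardinality framework, as set up, does not close.

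The paper avoids all of this by descending on \emph{rank} rather than cardinality, with a maintained invariant. Start from $(A_1,j_1,B_1)=(X_1-\cl(X_2),\,x,\,\cl(X_2)-\{x\})$, so that $B_1\cup\{j_1\}$ is closed. If the third bullet fails for some $j_2\in A_1\cap J$, take the offending partition, push it through Lemma~\ref{vertcl} so that its big side plus $j_2$ is closed; its small side $A_2$ then lies inside $A_1$, while its big side plus $j_2$ contains the closed set $B_1\cup\{j_1\}$ together with $j_2\notin\cl(B_1\cup\{j_1\})$, which forces $\ra(A_2)\le\ra(A_1)-1$. Since the small side of a vertical $3$-partition always has rank at least $3$, the descent terminates, and it terminates precisely at a partition satisfying all three bullets. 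The closure invariant is what buys the strict rank drop; your cardinality measure has no analogous mechanism, which is why your contradiction step stalls.
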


\begin{proof}
In order to construct such a partition $(Z_1,z,Z_2)$, we begin by
checking the vertical 3-partition $(A_1,j_1,B_1)$, where
$A_1=X_1-\cl(X_2)$, $j_1=x$, and $B_1=\cl(X_2)-\{x\}$. Clearly,
$A_1\subseteq X_1$, $A_1\cap J\ne\emptyset$ (by the conditions set
out in the statement of the lemma), and $B_1\cup\{j_1\}$ is
closed. Then either we have constructed the desired vertical
3-partition, or there is some $j_2\in A_1\cap J$ such that there
exists a vertical 3-partition $(A_2,j_2,B_2)$ with $A_2\subseteq
A_1$ and $B_2\cup\{j_2\}$ is closed. Since $B_1\cup \{j_1\}$ is
closed, and $j_2\in A_1\cap\cl(B_2)$, it follows that
$\ra(A_2)<\ra(A_1)$. Also $A_2\cap J\ne\emptyset$, by the
conditions set out in the statement of the lemma. We may repeat
this process, each time choosing $j_i\in A_i\cap J$, until we
produce the desired vertical 3-partition $(A_k,h_k,B_k)$. We will
eventually achieve this since $\ra(A_i)<\ra(A_{i-1})$ always.
\end{proof}

We now prove Theorem~\ref{main2}, which tells us that when we have
a 3-connected matroid with a vertical 3-partition, then we can
always find some element on either of the large sides of the
partition, whose contraction keeps us 3-connected up to parallel
classes. We restate Theorem~\ref{main2} here for ease of reading.

\begin{theorem}\label{biglem}
Let $(X_1,x,X_2)$ be a vertical 3-partition of a 3-connected
matroid $M$. Then there exists $y\in X_i$, $i=1,2$, such that
$\si(M/y)$ is 3-connected.
\end{theorem}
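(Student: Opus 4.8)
The plan is to argue by contradiction: suppose that for every $y\in X_1\cup X_2$, the matroid $\si(M/y)$ is not $3$-connected. Since $x$ itself has this property (it produces the vertical $3$-partition $(X_1,x,X_2)$), the whole set $J:=X_1\cup\{x\}\cup X_2=E(M)$ consists of elements whose contraction fails to keep simple $3$-connectivity, so Lemma~\ref{minimal} is available to us. First I would apply Lemma~\ref{minimal} with this $J$ and the given partition, but I need to check its hypothesis — namely that for every $y\in(X_1\cup\{x\})\cap J$ with a vertical $3$-partition $(Y_1,y,Y_2)$ satisfying $Y_1\subseteq X_1$, one has $Y_1\cap J\neq\emptyset$. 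Here $J=E(M)$, so $Y_1\cap J=Y_1\neq\emptyset$ automatically, and likewise the symmetric condition on the $X_2$ side. Hence Lemma~\ref{minimal} yields an element $z$ (I may assume after relabelling that $z\in X_1\cup\{x\}$) with a vertical $3$-partition $(Z_1,z,Z_2)$ such that $Z_1\subseteq X_1$, $Z_2\cup\{z\}$ is closed, and for every $j\in Z_1$ and every vertical $3$-partition $(J_1,j,J_2)$ of $M$, both $J_1$ and $J_2$ meet $X_2$. Using Lemma~\ref{vertcl} I can also arrange $Z_1=Z_1-\cl(Z_2)$, i.e. no element of $Z_1$ lies in $\cl(Z_2)$.

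Next I would fix an element $j\in Z_1$ — note $Z_1\neq\emptyset$ — and exploit its own vertical $3$-partition $(J_1,j,J_2)$, which exists since $j\in J=E(M)$. The ``minimality'' conclusion tells me $J_1\cap X_2\neq\emptyset$ and $J_2\cap X_2\neq\emptyset$, so $X_2$ is split by the partition $(J_1,\{j\},J_2)$. Since $j\in Z_1\subseteq X_1$, we have $j\notin X_2$, and I want to uncross the $3$-separating set $X_2$ (more precisely $X_2$ together with the right closure) against $J_1$ and against $J_2$. The point of the set-up is that $\sqcap(X_1,X_2)=\sqcap(Z_1,Z_2)=2$ and $z,j$ behave controllably with respect to these separations; by Lemma~\ref{uncrossing} applied to $X_2\cup\{z\}$ (a $3$-separator, as $z\in\cl(X_2)$) and $J_i\cup\{j\}$ I can build new $3$-separators, and by submodularity drive down the rank of the pieces $J_i\cap X_1$. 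The aim is to show that one of $J_1\cap X_1$ or $J_2\cap X_1$, together with a controlled closure, is a small $3$-separating set on which the hypotheses of Corollary~\ref{segment} (or directly Lemma~\ref{segment-1}) are satisfied — i.e. we locate a rank-$3$ $3$-separator $A$ with a distinguished element $e$ such that $\ra(A-\{e\})=2$ and $|A-\{e\}-\cl(B)|\geq 3$, all of whose ``leg'' elements lie in $J$. Then Corollary~\ref{segment}/Lemma~\ref{segment-1} forces a cosegment of size $\geq 4$ meeting any hyperplane, and more to the point produces some $h$ with $\si(M/h)$ $3$-connected, contradicting our standing assumption.

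The main obstacle, I expect, is the bookkeeping in the uncrossing step: $j$ may lie in $\cl(X_2)$ or in $\cl(Z_2)$ (the latter is ruled out by the Lemma~\ref{vertcl} normalization, but the former is not), and the guts/coguts behaviour of $z$ and $j$ across the four pieces $J_i\cap X_1$, $J_i\cap X_2$ must be pinned down case by case using Lemmas~\ref{coclosure}, \ref{guts}, \ref{flowers1}, and \ref{flowers2}. In particular I will need to rule out degenerate cases where one of the four pieces is empty or has rank too small for $M$ to remain $3$-connected — these should each yield an immediate contradiction to $3$-connectivity, but there are several of them. A secondary subtlety is ensuring that when I invoke Corollary~\ref{segment} I genuinely have a hyperplane containing $A-\{e\}$; if no such hyperplane is around I would instead invoke Lemma~\ref{segment-1} directly to get the size-$\geq 4$ cosegment $D$ with $A\cup D$ $3$-separating, and then use Lemma~\ref{cosegment2} to find $d\in D$ with $M/d$ $3$-connected (hence $\si(M/d)$ $3$-connected), which is again the desired contradiction. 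Once the contradiction is reached, the theorem follows.
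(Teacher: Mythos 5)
There is a genuine gap. Your set-up (argue by contradiction, normalize the partition via Lemma~\ref{minimal} so that $Z_2\cup\{z\}$ is closed and every vertical 3-partition of every element of $Z_1$ meets $X_2$ on both sides) coincides with the paper's opening move, but everything after ``fix $j\in Z_1$ and uncross'' is an unexecuted plan rather than an argument. The phrases ``the aim is to show\dots'' and ``the main obstacle, I expect, is the bookkeeping\dots'' mark exactly the part of the proof that carries all the difficulty. The paper's actual route through this middle section is: show all four cells $X_i\cap Y_j$ are nonempty; prove $\ra((X_1\cap Y_2)\cup\{y\})=2$ using the minimality property (Sublemma~\ref{rank}); prove that $(X_1\cap Y_1)\cup\{x,y\}$ is \emph{not} 3-separating (Sublemma~\ref{subn2}), which itself requires introducing a third vertical 3-partition and, in the case $|X_1\cap Y_2|=1$, an application of Bixby's Theorem~\ref{bixby}; deduce $|X_2\cap Y_2|=1$, that $(X_1\cap Y_2)\cup\{y\}$ is a \emph{maximal} segment in $X_1$ and that $Y_1\cup\{y\}$ is closed; prove $y\in\cl((X_1\cap Y_1)\cup\{x\})$ via local connectivity (Sublemma~\ref{rank2}); and finally obtain a contradiction by comparing with the vertical 3-partition of a second element $s$ of the segment. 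None of these steps appears in your proposal, and Bixby's theorem, which is indispensable in one case, is never invoked.

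Moreover, your intended endgame is the wrong target. Corollary~\ref{segment} is unavailable here because Theorem~\ref{biglem} has no hyperplane in its hypotheses, and Lemma~\ref{segment-1} requires $|A-\{e\}-\cl(B)|\geq 3$; in the configuration that actually arises the relevant segment is $(X_1\cap Y_2)\cup\{y\}$ with $y\in\cl(Y_1)$ and $|X_1\cap Y_2|\geq 2$ only, so the lemma's hypothesis can fail (indeed, in Section~4 the paper uses Corollary~\ref{segment} precisely to force $|X_1\cap Y_2|=2$, i.e.\ the case your plan cannot reach must still be handled separately). A secondary issue: you negate the wrong statement. As the theorem is used in Corollary~\ref{bigcor}, it asserts a good element in \emph{each} of $X_1$ and $X_2$; the correct contradiction hypothesis is therefore ``$\si(M/y)$ is not 3-connected for all $y\in X_1$'' (followed by symmetry on $X_2$), not ``for all $y\in X_1\cup X_2$''. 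Your version would prove only the weaker ``some element of $X_1\cup X_2$ works,'' which does not suffice for the paper's applications.
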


\begin{proof}
Suppose that the lemma is false, and suppose that $(X_1,x,X_2)$ is
a vertical 3-partition of $M$, such that for all $y\in X_1$,
$\si(M/y)$ is not 3-connected. Then we may assume by the
construction detailed in the proof of Lemma~\ref{minimal}, that
$X_2\cup\{x\}$ is closed and for all $y\in X_1$, whenever
$(Y_1,y,Y_2)$ is a vertical 3-partition, then $Y_1\cap X_2 \ne
\emptyset$ and $Y_2\cap X_2\ne\emptyset$.
\begin{figure}
\begin{tikzpicture}
    \draw (0,0)--(0,3) (1.2,0)--(1.2,3) (1.8,0)--(1.8,3)
    (3,0)--(3,3);
    \draw (0,0)--(3,0) (0,1.2)--(3,1.2) (0,1.8)--(3,1.8)
    (0,3)--(3,3);
    \filldraw[black] (0.6,1.5)circle(3pt) (1.5,2.4)circle(3pt);
    \draw (0.6,0.6)node{$\ne\emptyset$} (0.6,2.4)node{$\ne\emptyset$}
    (2.4,0.6)node{$\ne\emptyset$} (2.4,2.4)node{$\ne\emptyset$};
    \draw (-0.4,2.4)node{$X_1$} (-0.4,0.6)node{$X_2$} (0.6,3.3)node{$Y_1$}
    (2.4,3.3)node{$Y_2$} (-0.4,1.5)node{$x$} (1.5,3.3)node{$y$};
\end{tikzpicture}
\caption{A Venn diagram showing the 3-partitions $(X_1,x,X_2)$ and
$(Y_1,y,Y_2)$.} \label{venn}
\end{figure}

Let $y\in X_1$ and $(Y_1,y,Y_2)$ be a vertical 3-partition of $M$
with $x\in Y_1$. Consider the Venn diagram for $E(M)$ of Figure
\ref{venn}. By construction, $Y_1\cap X_2\ne\emptyset$ and
$Y_2\cap X_2\ne \emptyset$. Also, since $X_2\cup \{x\}$ is closed,
we see that $y\notin\cl(X_2\cup\{x\})$. However $y\in\cl(Y_1)$ and
$y\in\cl(Y_2)$, meaning that $Y_1\cap X_1\ne\emptyset$ and
$Y_2\cap X_1\ne\emptyset$. We now consider the connectivity of
these sets. We know that $X_2\cup\{x\}$ and $Y_1$ are 3-separating
in $M$ and intersect in at least two elements, so by uncrossing,
$(X_1\cap Y_2)\cup\{y\}$ is 3-separating in $M$. By a similar
argument, $X_1\cap Y_2$ is also 3-separating.

\begin{sublemma}\label{rank}
$\ra((X_1\cap Y_2)\cup\{y\})= 2$.
\end{sublemma}
\begin{proof}
Suppose that $\ra((X_1\cap Y_2)\cup\{y\})\geq 3$. Then
$\lambda((X_1\cap Y_2)\cup\{y\})=\lambda(X_1\cap Y_2)=2$. Also
$y\in\cl(Y_1)$, hence by Lemma \ref{guts}, $y\in\cl(X_1\cap Y_2)$,
so that $\ra(X_1\cap Y_2)\geq 3$. It follows that $(X_2\cup
Y_1,y,X_1\cap Y_2)$ is a vertical 3-partition of $M$ with
$(X_1\cap Y_2)\cap X_2=\emptyset$, a contradiction to our
construction of $(X_1,x,X_2)$.
\end{proof}

\begin{sublemma}\label{subnew1} If $(X_1\cap Y_1)\cup\{x,y\}$ is 3-separating
in $M$ then $\ra((X_1\cap Y_1)\cup\{x,y\})=2$.
\end{sublemma}
\begin{proof}
Suppose $(X_1\cap Y_1)\cup\{x,y\}$ is 3-separating. Then since
$x\in\cl(X_2)$ and $y\in\cl(Y_2)$, it follows that each of
$X_1\cap Y_1$, $(X_1\cap Y_1)\cup\{x\}$, $(X_1\cap Y_1)\cup\{y\}$
and $(X_1\cap Y_1)\cup\{x,y\}$ is 3-separating. By a similar
argument to Sublemma \ref{rank}, we see that $\ra((X_1\cap
Y_1)\cup\{x,y\})=2$.
\end{proof}

\begin{sublemma}\label{subn2}
$(X_1\cap Y_1)\cup\{x,y\}$ is not 3-separating in $M$.
\end{sublemma}

\begin{proof}
Suppose $(X_1\cap Y_1)\cup\{x,y\}$ is 3-separating in $M$. Then by
Sublemmas~\ref{rank} and \ref{subnew1}, $\ra((X_1\cap
Y_2)\cup\{y\})=2$ and $\ra((X_1\cap Y_1)\cup\{x,y\})=2$, hence
$\ra(X_1)=3$. Now suppose that $|X_1\cap Y_2|\geq 2$. Then
$y\in\cl(X_1\cap Y_2)$ by Lemma~\ref{guts}. We now choose $z\in
X_1\cap Y_1$ and consider a vertical 3-partition $(Z_1,z,Z_2)$. We
may assume by symmetry that $Z_1\cap((X_1\cap Y_1)\cup\{x,y\})\ne
\emptyset$ and that $Z_1\cup\{z\}$ is closed by
Lemma~\ref{vertcl}. Then since $z\in\cl(Z_1)$, it follows that
$(X_1\cap Y_1)\cup\{x,y\}\subseteq\cl(Z_1)$, therefore $(X_1\cap
Y_1)\cup\{x,y\}\subseteq Z_1\cup\{z\}$. Observe that $Z_2\cap
X_1\ne\emptyset$ because $z\in\cl(Z_2)$ and
$z\notin\cl(X_2\cup\{x\})$, and as a result, $Z_2\cap (X_1\cap
Y_2)\ne \emptyset$. Furthermore, as $Z_1\cup\{z\}$ is closed and
$y\in Z_1$, it follows that $Z_1\cap(X_1\cap Y_2)=\emptyset$,
otherwise we would have the contradiction that $(X_1\cap
Y_2)\subseteq \cl(Z_1)$. This means that $(X_1\cap Y_2)\subseteq
Z_2$, resulting in $\{y,z\}\subseteq\cl(Z_2)$. Furthermore, this
implies that $(X_1\cap Y_1)\cup\{y\}\subseteq\cl(Z_2)$, because
$(X_1\cap Y_1)\cup\{y\}\subseteq\cl(\{y,z\})$. By
Lemma~\ref{vertcl}, we may now construct the vertical 3-partition
$(Z_1-\cl(Z_2),z,\cl(Z_2)-\{z\})$ which has $X_1\subseteq
\cl(Z_2)$. This is a contradiction since that would mean that
$z\in\cl(Z_1-\cl(Z_2))$, which is impossible as
$Z_1-\cl(Z_2)\subseteq X_2\cup\{x\}$. This contradiction shows
that if $(X_1\cap Y_1)\cup\{x,y\}$ is 3-separating, then we cannot
have $|X_1\cap Y_2|\geq 2$, and we see that $|X_1\cap Y_2|=1$.

Letting $w\in X_1\cap Y_2$, it follows easily that $((X_1\cap
Y_1)\cup\{x,y\}, X_2)$ is a vertical 2-separation of $M\ba w$. By
Bixby's Theorem~\ref{bixby}, it follows that $\si(M/w)$ is
3-connected, contradicting our original assumption that for all
$e\in X_1$, $\si(M/e)$ is not 3-connected. The result follows.
\end{proof}

Consider the size of $X_2\cap Y_2$. If $|X_2\cap Y_2|\geq 2$, then
by uncrossing, $(X_1\cap Y_1)\cup\{x,y\}$ is 3-separating,
contradicting Sublemma~\ref{subn2}. Hence, it must be the case
that $|X_2\cap Y_2|=1$ and $\lambda((X_1\cap Y_1)\cup\{x,y\})=3$
(by uncrossing, we must have $\lambda((X_1\cap
Y_1)\cup\{x,y\})\leq 3$).

Proceeding from here, it is helpful to continue to refer to the
Venn diagram of Figure~\ref{venn} to gain intuition. We have
$|Y_2|\geq 3$ and $|Y_2\cap X_2|=1$, hence $|X_1\cap Y_2|\geq 2$
meaning that $(X_1\cap Y_2)\cup\{y\}$ is a segment of size at
least three. It is clear also that since $|X_2\cap Y_2|=1$,
$\ra(X_1\cap Y_2)=2$, and $\ra(Y_2)\geq 3$, it follows that
$Y_1\cup\{y\}$ is closed and $\ra(Y_2)=3$. Evidently, no member of
$X_1\cap Y_1$ can extend $(X_1\cap Y_2)\cup\{y\}$ to a larger
segment because $Y_1\cup\{y\}$ is closed. Hence $(X_1\cap
Y_2)\cup\{y\}$ is a maximal segment contained in $X_1$.

\begin{sublemma}\label{rank2}
$y\in\cl\, ((X_1\cap Y_1)\cup\{x\})$.
\end{sublemma}
\begin{proof}
Suppose that $y\notin\cl((X_1\cap Y_1)\cup\{x\})$. Then since
$y\in\cl(Y_2)$, we have $\lambda((X_1\cap
Y_1)\cup\{x\})=\lambda((X_1\cap Y_1)\cup\{x,y\})-1=2$.

We now see that $((X_1\cap Y_1)\cup\{x\},X_2,(X_1\cap
Y_2)\cup\{y\})$ is an exact 3-partition of $M$ with
$\sqcap((X_1\cap Y_1)\cup\{x\},X_2)\geq 1$ since $x\in\cl(X_2)$.
By Lemma~\ref{flowers1}, $\sqcap((X_1\cap Y_1)\cup\{x\},(X_1\cap
Y_2)\cup \{y\})\geq 1$. We also see that since $(X_1\cap
Y_2)\cup\{y\}$ is a segment and $X_1\cap
Y_2\not\subseteq\cl(Y_1)$, we have $\sqcap((X_1\cap Y_2)\cup\{y\},
Y_1)=1$. By Lemma \ref{flowers1}, $\sqcap((X_1\cap
Y_1)\cup\{x\},(X_1\cap Y_2)\cup\{y\})=1$, and by Lemma
\ref{flowers2}, we have $y\in\cl((X_1\cap Y_1)\cup\{x\})$ because
$y\in\cl(Y_1)$. This contradicts our initial assumption, and we
conclude that $y\in\cl((X_1\cap Y_1)\cup\{x\})$.
\end{proof}

Let $s\in X_1\cap Y_2$, and consider a vertical 3-partition
$(S_1,s,S_2)$ with $x\in S_1$. By the symmetry of the situation,
$(S_1,s,S_2)$ shares many of the same properties as $(Y_1,y,Y_2)$,
for example $|X_2\cap S_2|=1$, $(X_1\cap S_2)\cup\{s\}$ is a
maximal segment contained in $X_1$, also $s\in\cl((X_1\cap
S_1)\cup\{x\})$ and $S_1\cup \{s\}$ is closed with $\ra(S_2)=3$.
Consider the members of the segment $(X_1\cap S_2)\cup\{s\}$.
Since $s\in\cl(X_1\cap S_2)$ and $s\notin\cl((X_1\cap
Y_1)\cup\{y\})$ (recall that $Y_1\cup\{y\}$ is closed), there must
be some member $s'$ of $X_1\cap Y_2$ that is contained in $X_1\cap
S_2$. Now, as $\{s,s'\}$ is a subset of $(X_1\cap Y_2)\cup\{y\}$
and $(X_1\cap S_2)\cup\{s\}$, both of which are maximal segments
contained in $X_1$, it follows that $(X_1\cap
Y_2)\cup\{y\}=(X_1\cap S_2)\{s\}$. This implies that $X_1\cap
Y_1=X_1\cap S_1$, and we see that $\{y,s\}\subseteq\cl((X_1\cap
Y_1)\cup\{x\})$, a contradiction as we have already established
that $Y_1\cup\{y\}$ is closed and $s\notin Y_1\cup\{y\}$.

We conclude from this final contradiction that our original
assumption, that for all $e\in X_1$, $\si(M/e)$ is not
3-connected, must be false. The result now follows by a symmetric
argument on $X_2$.
\end{proof}

The result of Theorem~\ref{main2} can now be put to use on our
problem, and we obtain the following corollary.

\begin{corollary}\label{bigcor}
Let $M$ be a 3-connected matroid with a hyperplane $H$, such that
for all $h\in H$, $\si(M/h)$ is not 3-connected. Let $(X_1,x,X_2)$
be a vertical 3-partition of $M$ with $x\in H$, and let $C$ be the
cocircuit whose complement is $H$. Then $X_i\cap H\ne\emptyset$
and $X_i\cap C\ne \emptyset$, for $i=1,2$.
\end{corollary}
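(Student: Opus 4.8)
The plan is to argue by contradiction in each of the four cases, using Theorem~\ref{biglem} together with the hypothesis that $\si(M/h)$ is not 3-connected for every $h\in H$. The key point is that Theorem~\ref{biglem}, applied to the vertical 3-partition $(X_1,x,X_2)$, provides for each $i=1,2$ an element $y_i\in X_i$ with $\si(M/y_i)$ 3-connected; so to reach a contradiction it suffices to trap $X_1$ or $X_2$ entirely inside $H$.

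First I would handle the statement $X_i\cap C\ne\emptyset$. Suppose $X_1\cap C=\emptyset$. Since $C=E(M)-H$, this says $X_1\subseteq H$. By Theorem~\ref{biglem} there is $y\in X_1$ with $\si(M/y)$ 3-connected; but then $y\in H$, contradicting the hypothesis. The identical argument with $X_2$ in place of $X_1$ gives $X_2\cap C\ne\emptyset$.

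Next I would handle $X_i\cap H\ne\emptyset$. Suppose $X_1\cap H=\emptyset$. Since $x\in H$ and $E(M)=X_1\cup\{x\}\cup X_2$ is a disjoint union, every element of $H$ lies in $X_2\cup\{x\}$, so $H\subseteq X_2\cup\{x\}$. Because $(X_1,x,X_2)$ is a vertical 3-partition we have $x\in\cl\,(X_2)$, hence $\cl\,(X_2)=\cl\,(X_2\cup\{x\})\supseteq\cl\,(H)=H$, so $\cl\,(X_2)$ is a flat containing $H$. On the other hand, $\la(X_1)=\ra(X_1)+\ra(X_2\cup\{x\})-\ra(M)=\ra(X_1)+\ra(X_2)-\ra(M)=2$, and $\ra(X_1)\geq 3$ forces $\ra(X_2)\leq\ra(M)-1$; thus $\cl\,(X_2)$ is a \emph{proper} flat containing the hyperplane $H$, and maximality of $H$ among proper flats gives $\cl\,(X_2)=H$, so $X_2\subseteq H$. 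Now Theorem~\ref{biglem} supplies $y\in X_2\subseteq H$ with $\si(M/y)$ 3-connected, contradicting the hypothesis. The symmetric argument, exchanging the roles of $X_1$ and $X_2$, gives $X_2\cap H\ne\emptyset$.

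The only real content is the case $X_i\cap H=\emptyset$: one has to show that avoiding $H$ on one side of the partition forces the other side to be swallowed by $H$, which is precisely the closure computation together with the maximality of a hyperplane carried out above. I expect the one thing to be careful about there is ruling out the degenerate possibility $\ra(X_2)=\ra(M)$ (equivalently $\cl\,(X_2)=E(M)$), which is exactly what $\ra(X_1)\geq 3$ forbids; once that is checked, everything else is an immediate appeal to Theorem~\ref{biglem}.
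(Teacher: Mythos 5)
Your proposal is correct and follows essentially the same route as the paper: both parts rest on Theorem~\ref{biglem} applied to $(X_1,x,X_2)$, and your closure computation showing $\cl(X_2)$ is a proper flat containing $H$ (hence equals $H$) is just a rephrasing of the paper's step that $X_2\cup\{x\}$ lies in a hyperplane $H'$ forcing $H=X_2\cup\{x\}$, after which both arguments contradict $X_2\cap C\ne\emptyset$.
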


\begin{proof}
Suppose first that $(X_1,x,X_2)$ is a vertical 3-partition of $M$
with $x\in H$. Then by Theorem~\ref{main2}, there exists $y\in
X_i$, $i=1,2$, such that $\si(M/y)$ is 3-connected. Since
$\si(M/h)$ is not 3-connected for all $h\in H$, we see that
$X_i\cap C\ne\emptyset$, $i=1,2$.

Now suppose that $X_1\cap H=\emptyset$, so that $X_1\subseteq C$
and $H\subseteq X_2\cup\{x\}$. Since $(X_1,x,X_2)$ is a vertical
3-partition, $\ra(X_2\cup\{x\})<\ra(M)$ meaning that
$X_2\cup\{x\}$ is contained in some hyperplane $H'$ of $M$. Thus
$H\subseteq X_2\cup\{x\}\subseteq H'$, implying that
$H=X_2\cup\{x\}$, contradicting Corollary~\ref{bigcor} which
states that $X_2\cap C\ne\emptyset$. The result now follows by a
symmetric argument on $X_2\cap H$.
\end{proof}

The following lemma will be used in the proof of Lemma~\ref{dual},
which is an important part of the proof of our main theorem.
\begin{lemma}\label{coseg2}
Let $M$ be a 3-connected matroid, and let $X$ and $Y$ be disjoint
subsets of $E(M)$, where $X$ is a cosegment. If for some $x\in X$,
$\sqcap(X-\{x\},Y)\geq 1$, then $X$ is a maximal member of the
class of all cosegments of $M$ that do not intersect $Y$.
\end{lemma}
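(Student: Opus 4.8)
The plan is to argue by contradiction, using orthogonality between circuits and triads together with the fact that $X'$ being a cosegment forces many triads inside it. Suppose $X$ is not maximal, so there is a cosegment $X'$ of $M$ with $X\subsetneq X'$ and $X'\cap Y=\emptyset$. Fix $z\in X'-X$. Since $X-\{x\}$ and $Y$ are disjoint and, by hypothesis, $\sqcap(X-\{x\},Y)\geq 1$, we may invoke the standard fact that for disjoint sets $A,B$ one has $\sqcap(A,B)=0$ exactly when $M|(A\cup B)=M|A\oplus M|B$ (equivalently, every circuit of $M$ inside $A\cup B$ lies in $A$ or in $B$). Thus there is a circuit $C$ of $M$ with $C\subseteq(X-\{x\})\cup Y$, $C\cap(X-\{x\})\neq\emptyset$, and $C\cap Y\neq\emptyset$.

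Next I would track how $C$ meets $X'$. Because $C\subseteq(X-\{x\})\cup Y$, because $X-\{x\}\subseteq X'$, and because $Y\cap X'=\emptyset$, we get $C\cap X'=C\cap(X-\{x\})$, which is nonempty. On the other hand, neither $x$ nor $z$ lies in $X-\{x\}$, so $\{x,z\}\subseteq X'-(X-\{x\})\subseteq X'-C$; in particular $|X'-C|\geq 2$ (so also $|X'|\geq 3$), and $C\cap X'$ is a nonempty proper subset of the cosegment $X'$ whose complement in $X'$ has at least two elements.

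Finally, pick $c\in C\cap X'$ and two distinct elements $a,b\in X'-C$. Since $X'$ is a cosegment, $\{a,b,c\}$ is a triad of $M$, yet $C\cap\{a,b,c\}=\{c\}$ (as $a,b\notin C$), so the circuit $C$ meets this triad in exactly one element, contradicting orthogonality. Hence no such $X'$ exists and $X$ is maximal among the cosegments of $M$ disjoint from $Y$. The only point requiring any care is the bookkeeping that yields $|X'-C|\geq 2$ — that is, that $C$ cannot reach the elements $x$ and $z$ of $X'$ — which is precisely where the disjointness $X'\cap Y=\emptyset$ and the omission of $x$ from $X-\{x\}$ are used; the rest is a one-line application of circuit--cocircuit orthogonality. (Note that $3$-connectedness of $M$ enters only through the standing convention that cosegments are considered inside $3$-connected matroids.)
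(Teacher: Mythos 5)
Your proof is correct, and it is essentially the paper's argument in different packaging: both proceed by contradiction from a strictly larger cosegment $X'$ disjoint from $Y$, and both exploit that $x$ and the new element $z\in X'-X$ lie outside $(X-\{x\})\cup Y$, so that the triads of $X'$ through $x$ and $z$ obstruct any rank drop between $X-\{x\}$ and $Y$. The paper expresses this by noting that the remaining elements of $X'$ become coloops of $M\backslash\{x,z\}$, giving $\sqcap(X-\{x\},Y)=0$ via monotonicity of $\sqcap$, while you express it as circuit--cocircuit orthogonality applied to a circuit witnessing $\sqcap(X-\{x\},Y)\geq 1$; these are two standard formulations of the same fact, and your bookkeeping (in particular $|X'-C|\geq 2$) is sound.
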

\begin{proof}
Suppose this is false, and that for some $e\in E(M)-(X\cup Y)$,
$X\cup\{e\}$ is a cosegment of $M$. Then since $e$ and $x$ are
distinct members of $X\cup\{e\}$, the remaining members of
$X\cup\{e\}$ become coloops in the matroid $M\ba \{e,x\}$. It
follows that $\sqcap(X-\{x\},E(M)-(X\cup\{e\}))=0$, implying that
$\sqcap(X-\{x\},Y)=0$ by Lemma~\ref{flowers2-}. The result follows
by contradiction.
\end{proof}

For the next lemma, we define \textit{covertical $k$-partitions}
and \textit{covertical $k$-separations} of a matroid to be
vertical $k$-partitions and $k$-separations of the dual matroid
respectively. For this lemma, we consider the dual of our problem,
namely that our 3-connected matroid has a cohyperplane from which
deletion of any element leaves the matroid with a covertical
2-separation. Here, we consider only the case where the complement
of the cohyperplane is a triangle.

\begin{lemma}\label{dual}
Let $M$ be a 3-connected matroid with a cohyperplane $H$, such
that for all $h\in H$, $\co(M\ba h)$ is not 3-connected, and let
$C$ be the circuit whose complement is $H$. Suppose that $C$ is a
triangle of $M$. Then $M$ is a member of the family $\mathcal{P}$
of matroids defined in Section 1.
\end{lemma}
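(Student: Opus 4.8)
The plan is to take the dual of the hypothesis and run an inductive argument on $|E(M)|$, building up the structure of $\mathcal{P}$ one $K_4$ at a time. Dualizing everything: $M^*$ is $3$-connected, $C$ is a triad of $M^*$, $H = E(M^*) - C$ is a hyperplane of $M^*$, and for every $h \in H$ the simplification $\si(M^*/h)$ is not $3$-connected. So $M^*$ falls into the setting of Theorem~\ref{main} (or at least its hypotheses), and $M^*$ should turn out to be $M^*(\tilde K_{3,n})$, which is exactly the statement that $M \in \mathcal{P}$. Working in $M^*$, write $C = \{t_{11}, t_{12}, t_{13}\}$ for the triad. For each $h \in H$, Lemma~\ref{contr2} gives a vertical $3$-partition $(X_1, h, X_2)$ of $M^*$, and Corollary~\ref{bigcor} tells us each side meets both $H$ and the cocircuit whose complement is $H$ --- but here that cocircuit is $C$ itself, so each $X_i$ contains at least one of $t_{11}, t_{12}, t_{13}$. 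Since $|C| = 3$, one side contains exactly one element of $C$ and the other contains two; say $t_{13} \in X_1$ and $\{t_{11}, t_{12}\} \subseteq X_2$.

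First I would pin down the "small" side. Using Corollary~\ref{segment} and Lemma~\ref{segment-1}: if some vertical $3$-partition has $\ra(X_1) = 3$ with a rank-$2$ flat inside $X_1$ containing $\geq 3$ elements outside $\cl(X_2)$, then we would find an element of $H$ whose contraction keeps $3$-connectivity, a contradiction. So the geometry on the small side is forced to be tightly constrained. The goal is to show $X_1 \cup \{h\}$ (or its closure) is exactly a copy of $K_4$ glued along $C$: i.e. $\ra(X_1 \cup \{h\}) = 4$, and $X_1 \cup \{h\}$ together with $C$ carries an $M(K_4)$-restriction, with $\{t_{21}, t_{22}, t_{23}\} := X_1$ forming a triad. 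I would get the triad by coclosure arguments: $h \in \cl^*(X_1) \cap \cl^*(X_2)$-type relations from Lemma~\ref{coclosure}, combined with the fact that $C$ is a triad and each side meets $C$, should force $X_1$ to be a triad of size $3$ and the restriction $M^*|_{X_1 \cup C}$ to be $M(K_4)$ (check it has no $U_{2,4}$-minor, is $3$-connected of rank $3$ on $6$ elements, hence $\cong M(K_4)$).

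Next comes the induction. Having identified one "leaf" $K_4$, say on $X_1 \cup C = \{t_{21}, t_{22}, t_{23}, t_{11}, t_{12}, t_{13}\}$, I would contract or delete to reduce. The natural move: form $M' = M^* / t_{13}$ and simplify, or equivalently work with the matroid obtained by deleting the triad $\{t_{21}, t_{22}, t_{23}\}$ and re-examining. One wants $\co(\si(M^*/h)) \cong M^*(\tilde K_{3,n-1})$-type reduction, mirroring the remark after Theorem~\ref{main}. Concretely: after peeling off the leaf $K_4$, the matroid $N = M^* \ba \{t_{21}, t_{22}, t_{23}\}$ (or a simplification of a minor) should still be $3$-connected, still have $C$ as a triad, still have the cohyperplane property on the remaining elements of $H$, and have strictly fewer elements. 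By induction $N$ is $M^*(\tilde K_{3,n-1})$, i.e. $N^* \in \mathcal{P}$, and re-attaching the leaf $K_4$ along $C$ gives $M \in \mathcal{P}$. The base case is when $H$ is exactly one $K_4$'s worth of elements, i.e. $|E(M^*)| = 9$ and $M^* \cong M^*(\tilde K_{3,3})$; Lemma~\ref{size} (dualized) already guarantees $|E(M)| \geq 7$, and one checks the smallest case by hand.

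The main obstacle I expect is showing that the leaf $K_4$ \emph{decomposes off cleanly} --- that after removing $X_1$ the remaining matroid genuinely satisfies the inductive hypotheses, in particular that it stays $3$-connected and that the cohyperplane-deletion property is inherited by the smaller matroid rather than being destroyed by the removal. Controlling how the vertical $3$-partitions of different elements $h \in H$ interact (uncrossing via Lemma~\ref{uncrossing}, keeping track of which side of each partition $C$ lands on, and ensuring the various leaf $K_4$'s are pairwise disjoint off of $C$) is where the real work lies; this is presumably why Lemmas~\ref{minimal}, \ref{coseg2}, and the machinery around cosegments and local connectivity were set up. In particular, I anticipate needing Lemma~\ref{coseg2} to rule out the leaf triads merging into a larger cosegment, which would break the clean "disjoint $K_4$'s" picture, and needing Corollary~\ref{bigcor} repeatedly to keep every vertical $3$-partition honest with respect to $C$.
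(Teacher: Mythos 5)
Your overall architecture differs from the paper's, and the two places where you defer the work are exactly where the proof lives. The paper does not induct on $|E(M)|$ at all: it argues directly that \emph{every} $h\in H$ lies in a triad $T$ with $M|_{T\cup C}\cong M(K_4)$, which is precisely the definition of membership in $\mathcal{P}$, so no peeling-off, no re-attachment, and no heredity of hypotheses is ever needed. The engine of that direct argument is a step you do not have: with $(X_1,x,X_2)$ a covertical $3$-partition and $C\cap X_1=\{c_1\}$, the partition $(X_1-\{c_1\},\,X_2\cup\{c_1\})$ is a $2$-separation of $M\ba x$ which \emph{cannot} be covertical (its small side misses $C$, contradicting Corollary~\ref{bigcor}), so $X_1-\{c_1\}$ is forced to be a \emph{series class} of $M\ba x$, whence $(X_1-\{c_1\})\cup\{x\}$ is a cosegment and $X_1$ is a circuit. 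Your "coclosure arguments should force $X_1$ to be a triad of size $3$" skips this entirely, and moreover the bound $|X_1|=3$ is not a coclosure fact: a priori the series class could be large. The paper gets $|X_1-\{c_1\}|=2$ by a counting argument on $C$ — the covertical $3$-partitions for $x$, $y$, $z\in (X_1-\{c_1\})\cup\{x\}$ each place a \emph{distinct} element of $C$ on their small side (distinctness comes from $X_1$ being a circuit together with orthogonality and the maximality of the cosegment via Lemma~\ref{coseg2}), and since $|C|=3$ a fourth element would have nowhere to put its $c_i$. None of this appears in your sketch.

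The inductive route itself has a genuine, unaddressed gap that you yourself flag: after deleting a leaf triad from $M^*$ there is no reason the resulting matroid is $3$-connected, nor that the remaining elements of $H$ retain the property that simplifying after contraction destroys $3$-connectivity — the vertical $3$-partition witnessing that property for some $h$ may have one side contained in the deleted set and simply vanish in the minor. Likewise "re-attaching the leaf $K_4$ along $C$ gives $M\in\mathcal{P}$" presupposes that $M^*$ is recoverable from the smaller matroid as a generalized parallel-type gluing across $C$, which is not established. Since both the identification of the small side as a triad and the inductive step are asserted rather than proved, the proposal as written does not constitute a proof; the paper's non-inductive argument is the cleaner path and sidesteps the decomposition difficulties entirely.
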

\begin{proof}
Firstly note that in $M^*$, $H$ is a hyperplane such that for all
$h\in H$, $\si(M/h)$ is not 3-connected. We may assume by
Lemma~\ref{size} that $|E(M)|\geq 7$. Let $C=\{c_1,c_2,c_3\}$, and
let $x\in H$ with $(X_1,x,X_2)$ a covertical 3-partition of $M$.
Then by Corollary~\ref{bigcor}, $C\cap X_i\ne\emptyset$, $i=1,2$.
We may assume without loss of generality that $C\cap X_1=\{c_1\}$,
giving $c_1\in\cl(X_2)$ (because $\{c_1,c_2,c_3\}$ is a triangle).
This implies that $(X_1-\{c_1\},X_2\cup\{c_1\})$ is a 2-separation
of $M\ba x$, however it is not a covertical 2-separation since
$(X_1-\{c_1\})\cap C=\emptyset$ which would contradict
Corollary~\ref{bigcor}. Since $(X_1-\{c_1\},X_2\cup\{c_1\})$ is
not covertical, $X_1-\{c_1\}$ is a series class of $M\ba x$. It
follows that $X_1-\{c_1\}\cup \{x\}$ is a cosegment of $M$. We
also see that $c_1\in\cl_M(X_1-\{c_1\})$, because otherwise
$X_1-\{c_1\}$ would be a separator of $M\ba x$, a contradiction to
the connectivity of $M$. Thus by Lemma \ref{circuit}, $X_1$ is a
circuit of $M\ba x$, and hence a circuit of $M$.

Now, let $y\in X_1-\{c_1\}$ and let $(Y_1,y,Y_2)$ be a covertical
3-partition of $M$, where $Y_1\cap C=\{c_i\}$. Again,
$Y_1-\{c_i\}\cup\{y\}$ is a cosegment of $M$. Let $\{y,z,w\}$ be a
triad of this cosegment that contains $y$. By orthogonality,
$\{z,w\}\cap (X_1-\{c_1\})\ne\emptyset$ since $X_1$ is a circuit
containing $y$, and $c_1\notin\{z,w\}$. It now follows that
$\{y,z,w\}$ intersects a triad of $X_1-\{c_1\}\cup\{x\}$ in at
least two members, so that $X_1-\{c_1\}\cup\{x,y,z,w\}$ is a
cosegment of $M$. Now, observe that $X_1-\{c_1\}\cup\{x\}$ is a
maximal cosegment of $E(M)-C$ by Lemma \ref{coseg2}, because
$\sqcap(X_1-\{c_1\},C)= 1$. It now follows that
$X_1-\{c_1\}\cup\{x,y,z,w\}= X_1-\{c_1\}\cup\{x\}$, and we deduce
that $Y_1-\{c_i\}\subseteq X_1\cup\{x\}$. A symmetric argument now
shows that $Y_1-\{c_i\}\cup\{y\}=X_1-\{c_1\}\cup\{x\}$. Now,
observe that since $X_1$ is a circuit of $M$,
$y\in\cl(X_1-\{y\})$, but also $y\notin\cl(Y_1)$, implying that
$c_i\ne c_1$. Thus we have without loss of generality that
$c_i=c_2$, that is $\{c_2\}= Y_1\cap C$.

We now consider $z\in X_1-\{c_1,y\}$ and a covertical 3-partition
$(Z_1,z,Z_2)$ of $M$, with $|Z_1\cap C|=1$. Then by the symmetry
of the argument above, $c_3\in Z_1$ and
$Z_1-\{c_3\}\cup\{z\}=X_1-\{c_1\}\cup\{x\}$ is a cosegment of $M$.
Now suppose that there is another member $w\in X_1-\{c_1,y,z\}$.
Then if $(W_1,w,W_2)$ were a covertical 3-partition of $M$ with
$|W_1\cap C|=1$, a symmetrical argument tells us that
$c_1,c_2,c_3\notin W_1$, a contradiction. We conclude that no such
$w$ exists, and that $X_1=\{c_1,y,z\}$. The result of this is that
$\{x,y,z\}$ is a maximal cosegment of $M$, and since $X_1$, $Y_1$
and $Z_1$ are circuits, we have $M|_{\{x,y,z,c_1,c_2,c_3\}}\cong
K_4$.

We may apply the argument above to any member $h\in H$, to show
that $h$ is contained in a triad $T$, and that $M|_{T\cup C}\cong
K_4$. We conclude that $M\in\mathcal{P}$.
\end{proof}

\section{Proof of main theorem.}
In this section, we complete the proof of the main theorem of the
paper, Theorem~\ref{main}. We restate Theorem~\ref{main} here for
ease of reading.

\begin{theorem}[Main theorem]\label{mainth}
A 3-connected matroid $M$ has a hyperplane $H$, such that for all
$h\in H$, $\si(M/h)$ is not 3-connected if and only if $M$ is a
member of the class of matroids $\mathcal{P}^*$ defined in
Section~1.
\end{theorem}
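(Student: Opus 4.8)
The plan is to prove the two directions of the equivalence separately, with the bulk of the work going into the forward direction. For the \emph{easy direction}, suppose $M \in \mathcal{P}^*$, so $M \cong M^*(\tilde{K}_{3,n})$ for some $n \geq 3$. Then $M^* \cong M(\tilde{K}_{3,n}) \in \mathcal{P}$, and in the graph $\tilde{K}_{3,n}$ one takes $H$ to be the edge set of the $K_{3,n}$ subgraph; equivalently, in $M$ we let $H$ be the complement of the triangle $C=\{c_1,c_2,c_3\}$. One checks directly that $H$ is a hyperplane of $M$ (its complement $C$ is a cocircuit of $M$, i.e.\ a circuit of $M^*$, namely the triangle added to the small vertex part). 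Then for each $h \in H$, $h$ lies in a triad $T$ of $M$ with $M|_{T \cup C} \cong K_4$; contracting $h$ creates a series pair among the other two elements of $T$, so $\si(M/h)=M/h$ is not $3$-connected. (One can even note $\co(\si(M/h)) \cong M^*(\tilde{K}_{3,n-1})$, as remarked after the statement of Theorem~\ref{main}.) This direction is essentially a direct verification on the explicit family.

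For the \emph{hard direction}, suppose $M$ is a $3$-connected matroid with a hyperplane $H$ such that $\si(M/h)$ is not $3$-connected for all $h \in H$; we must show $M \in \mathcal{P}^*$, i.e.\ $M^* \in \mathcal{P}$. Pass to the dual: $H$ is a \emph{cohyperplane} of $M^*$ with the property that for all $h \in H$, $\co(M^* \ba h)$ is not $3$-connected, and $C := E(M)-H$ is the circuit of $M^*$ whose complement is $H$. By Lemma~\ref{dual}, \emph{provided $C$ is a triangle of $M^*$}, we get $M^* \in \mathcal{P}$ and we are done. So the entire remaining task is to establish that $C$ \emph{must} be a triangle of $M^*$ — equivalently, that the cocircuit $C$ of $M$ has exactly three elements. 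This is where Corollary~\ref{segment} (and behind it Lemma~\ref{segment-1} and Theorem~\ref{main2}) comes in: I would argue by contradiction, assuming $|C| \geq 4$ or $|C| = 2$, and in each case locate inside $M$ a $3$-separation $(A,B)$ of the shape required by Corollary~\ref{segment} — namely $\ra(A)=3$, some $e$ with $\ra(A-\{e\})=2$ and $|A-\{e\}-\cl(B)| \geq 3$, with $A-\{e\} \subseteq H$ — which would produce an $h \in H$ with $\si(M/h)$ $3$-connected, a contradiction. Concretely: using Lemma~\ref{contr2} pick a vertical $3$-partition $(X_1,x,X_2)$ with $x \in H$; by Corollary~\ref{bigcor}, $X_i \cap H \neq \emptyset$ and $X_i \cap C \neq \emptyset$ for $i=1,2$; then analyze how $C$ meets the two sides and use rank counts together with Lemma~\ref{cosegment2} (for large cosegments) and Bixby's Theorem~\ref{bixby} to force $|C|=3$.

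I expect the main obstacle to be precisely this last step — ruling out $|C|\neq 3$ and carving out the configuration needed to invoke Corollary~\ref{segment}. The case $|C| \geq 4$ is likely handled by showing a large cocircuit forces either a large segment on one side of a vertical $3$-partition (feeding Corollary~\ref{segment}) or a large cosegment (feeding Lemma~\ref{cosegment2}, which gives $M/h$ $3$-connected outright for $h$ in that cosegment meeting $H$); the case $|C|=2$ is degenerate since a $3$-connected matroid of size at least $7$ has no cocircuit of size $2$, so that should be dispatched quickly, but one must be careful that $|E(M)| \geq 7$ (Lemma~\ref{size}) and that $H$ is genuinely a hyperplane (so $|C| \geq 1$ and $\ra(M)-1 = \ra(H)$). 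Once $C$ is known to be a triangle, Lemma~\ref{dual} closes the argument with no further work.
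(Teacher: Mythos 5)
Your overall architecture does match the paper's: the easy direction is a direct check on the family, and the hard direction is reduced to Lemma~\ref{dual} by showing that the cocircuit $C=E(M)-H$ is a triad of $M$, i.e.\ a triangle of $M^*$. However, the step you defer --- ``analyze how $C$ meets the two sides and use rank counts together with Lemma~\ref{cosegment2} and Bixby's Theorem to force $|C|=3$'' --- is not a loose end to be tidied; it is essentially the entire content of Section~4 of the paper, and the route you gesture at is not the one that works. The paper does not case-split on $|C|$. It fixes a vertical 3-partition $(X_1,x,X_2)$ chosen with the minimality supplied by Lemma~\ref{minimal}, takes a second partition $(Y_1,y,Y_2)$ for some $y\in X_1\cap H$, and splits on $|X_2\cap Y_2|$. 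When $|X_2\cap Y_2|\geq 2$, uncrossing, Corollary~\ref{segment} and Bixby's theorem force $X_1$, $Y_1$ and a third set $Z_1$ to be triads whose union contains the cosegment $\{x,y,z\}$, whence $C=\{a,b,c\}$ is a triad and Lemma~\ref{dual} applies; when $|X_2\cap Y_2|=1$, a contradiction is extracted from the segment $(X_1\cap Y_2)\cup\{y\}\subseteq H$, Sublemma~\ref{sublem4}, and the interaction with a third partition $(Z_1,z,Z_2)$. None of this is recoverable from ``a large cocircuit forces either a large segment or a large cosegment'': the configuration needed to invoke Corollary~\ref{segment} arises from the geometry of two crossing vertical 3-partitions, not from the size of $C$, and Lemma~\ref{cosegment2} enters only indirectly through Corollary~\ref{segment}. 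So the central step of the hard direction is a genuine gap in your proposal.

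A smaller but real error: your verification of the easy direction swaps the roles of $M$ and $M^*$. In $M\cong M^*(\tilde{K}_{3,n})$ the set $\{t_{i1},t_{i2},t_{i3}\}$ is a \emph{triangle} of $M$ (it is a triad of $M^*\in\mathcal{P}$), so contracting $h=t_{i1}$ creates a \emph{parallel} pair and $\si(M/h)\neq M/h$. The obstruction to 3-connectivity is a series pair that appears only after simplification, as the remark following Theorem~\ref{main} indicates; your sentence ``contracting $h$ creates a series pair among the other two elements of $T$, so $\si(M/h)=M/h$ is not 3-connected'' therefore needs repair, even though the conclusion it reaches is correct.
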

\begin{proof}
It is easily seen that all members of $\mathcal{P}^*$ have such a
hyperplane $H$, if we let $H$ be the set of elements
$\{t_{11},t_{12},t_{13},\dots,t_{n1},t_{n2},t_{n3}\}$.

We must now show that if $M$ has a hyperplane $H$ with the
contraction property stated above, then $M\in\mathcal{P}^*$. Let
$C$ be the cocircuit whose complement is $H$. Let $x\in H$ and let
$(X_1,x,X_2)$ be a vertical 3-partition of $M$ such that
$X_2\cup\{x\}$ is closed, and for all $y\in X_1\cap H$, whenever
$(Y_1,y,Y_2)$ is a vertical 3-partition of $M$, then $Y_1\cap
X_2\ne \emptyset$ and $Y_2\cap X_2\ne \emptyset$ (we know that
such a 3-partition exists by Lemma~\ref{minimal} and
Corollary~\ref{bigcor}). Corollary~\ref{bigcor} now tells us that
$X_i\cap C\ne\emptyset$ and $X_i\cap H\ne \emptyset$, for $i=1,2$.

Let $y\in X_1\cap H$ and $(Y_1,y,Y_2)$ be a vertical 3-partition
of $M$. Then as in the proof of Theorem~\ref{biglem}, each of
$X_1\cap Y_1$, $X_1\cap Y_2$, $X_2\cap Y_1$ and $X_2\cap Y_2$ is
nonempty. Now, observe by uncrossing that $X_1\cap Y_2$ and
$(X_1\cap Y_2)\cup\{y\}$ are 3-separators of $M$. If $|X_1\cap
Y_2|\geq 2$ then $\ra((X_1\cap Y_2)\cup\{y\})=2$ by a proof
similar to that of Sublemma~\ref{rank}.

\begin{sublemma}\label{sublem1}
If $|X_2\cap Y_2|\geq 2$, then $(X_1\cap Y_1)\cup\{x,y\}$ is a
segment contained in $H$, and $X_1\cap Y_2\subseteq C$.
\end{sublemma}

\begin{proof}
Suppose that $|X_2\cap Y_2|\geq 2$. Then by uncrossing, $(X_1\cap
Y_1)\cup\{x,y\}$, $(X_1\cap Y_1)\cup\{y\}$, $(X_1\cap
Y_1)\cup\{x\}$ and $X_1\cap Y_1$ are 3-separators of $M$. Since
$y\in\cl(Y_2)$, we have $y\in\cl((X_1\cap Y_1)\cup\{x\})$ by
Lemma~\ref{guts}, and a similar argument gives $x\in\cl((X_1\cap
Y_1)\cup\{y\})$. Also, if $|X_1\cap Y_1|\geq 2$ then
$x,y\in\cl(X_1\cap Y_1)$. We see that $\ra((X_1\cap
Y_1)\cup\{x,y\})=2$ by a similar proof to that of
Sublemma~\ref{rank}. Now, since $x,y\in H$, and $H$ is closed, we
see that $X_1\cap Y_1\subseteq H$. Furthermore, $(X_1\cap Y_2)\cap
C\ne\emptyset$ because we have $X_1\cap C\ne\emptyset$ and
$((X_1\cap Y_1)\cup\{y\})\cap C=\emptyset$. We now see that
$X_1\cap Y_2 \subseteq C$, because $H$ is closed, $y\in H$, and
$\ra((X_1\cap Y_2)\cup\{y\})=2$.
\end{proof}

\begin{sublemma}\label{sublem2}
If $|X_2\cap Y_2|\geq 2$ then $|X_1\cap Y_1|=1$.
\end{sublemma}

\begin{proof}
Suppose that $|X_2\cap Y_2|\geq 2$ and $|X_1\cap Y_1|\geq 2$. Then
by Corollary~\ref{segment}, we must have $|X_1\cap Y_2|\geq 2$ as
well. Choose $z\in X_1\cap Y_1$ and consider a vertical
3-partition $(Z_1,z,Z_2)$ of $M$. We may assume without loss of
generality that $x\in Z_1$. Combine this with the fact that $z\in
\cl(Z_1)$, to obtain $(X_1\cap Y_1)\cup\{x,y\}\subseteq\cl(Z_1)$.
Hence we may assume by Lemma~\ref{vertcl} that $(X_1\cap
Y_1)\cup\{x,y\}\subseteq Z_1$. Now, $z\in\cl(Z_2)$ and
$z\notin\cl(X_2)$ so we see that $(X_1\cap Y_2)\cap
Z_2\ne\emptyset$. Suppose that $X_1\cap Y_2\not\subseteq Z_2$ and
that $w\in(X_1\cap Y_2)\cap Z_1$. Then $\ra(Z_1\cap X_1)=3$, which
implies that $X_1\subseteq\cl(Z_1)$. Then by Lemma~\ref{vertcl},
$(\cl(Z_1)-\{z\},z,Z_2-\cl(Z_1))$ is a vertical 3-partition of $M$
with $Z_2-\cl(Z_1)\subseteq X_2$, contradicting the fact that
$z\notin\cl(X_2)$. We conclude that $X_1\cap Y_2\subseteq Z_2$.
Now, since $y\in\cl(X_1\cap Y_2)$, we see that
$\{y,z\}\subseteq\cl(Z_2)$ implying that $(X_1\cap
Y_1)\cup\{x,y\}\subseteq\cl(Z_2)$. By Lemma~\ref{vertcl}, we may
construct the new 3-partition $(Z_1-\cl(Z_2),z,\cl(Z_2)-\{z\})$ in
which $Z_1-\cl(Z_2)\subseteq X_2$, contradicting that
$z\notin\cl(X_2)$. We may conclude from this that it is not
possible to have $|X_1\cap Y_1|\geq 2$, and the result follows.
\end{proof}

\begin{sublemma}\label{sublem3}
If $|X_2\cap Y_2|\geq 2$, then $M$ is a member of the class
$\mathcal{P}^*$.
\end{sublemma}

\begin{proof}
Suppose that $|X_2\cap Y_2|\geq 2$. Then by
Sublemma~\ref{sublem2}, $|X_1\cap Y_1|=1$. Corollary~\ref{segment}
tells us that $|X_1\cap Y_2|=1$. Thus $X_1$ is a triad of $M$. Let
$z\in X_1\cap Y_1$, and let $a\in X_1\cap Y_2$. Then $z\in H$ as
$H$ is closed, and by Corollary~\ref{bigcor}, $a\in C$. We also
see by uncrossing that $(X_2\cap Y_1)\cup\{x\}$ is a 3-separator
of $M$, and hence a 2-separator  of $M\ba z$, which by Bixby's
Theorem \ref{bixby} implies that $|X_2\cap Y_1|=1$. We thus have
$Y_1$ a triad of $M$. Let $b\in X_2\cap Y_1$. Then by
Corollary~\ref{bigcor}, $Y_1\cap C\ne \emptyset$ implying that
$b\in C$.

Consider a vertical 3-partition $(Z_1,z,Z_2)$ of $M$, and assume
without loss of generality that $x\in Z_1$. Then since
$z\in\cl(Z_1)$, we have $y\in\cl(Z_1)$, so we may assume by Lemma
\ref{vertcl}, that $y\in Z_1$.

Now, the triads $\{a,y,z\}$ and $\{b,x,z\}$ must both intersect
$Z_2$ in order for $z\in\cl(Z_2)$, thus $a,b\in Z_2$. We have
$\lambda(\{x,y,z\})=2$, and $\ra(X_2\cap Y_2)=\ra((X_2\cap
Y_2)\cup\{a,b\})-2$, thus $\sqcap(\{x,y,z\},X_2\cap Y_2)=0$,
giving
$\ra(Z_1)=\ra(\{x,y\})+\ra(Z_1-\{x,y\})=\ra(Z_1-\{x,y\})+2$. Now,
since $z\in\cl(Z_2)$, $\ra(Z_2\cup\{x,y\})\leq\ra(Z_2)+1$. It
follows that $\lambda(Z_1-\{x,y\})\leq 1$ which implies that
$|Z_1-\{x,y\}|=1$. Then $Z_1$ is a triad of $M$, $Z_1\cup\{z\}$ is
a fan of $M$, and letting $c\in Z_1-\{x,y\}$, $M\ba c$ has the
vertical 2-separation $(\{x,y,z\},Z_2)$, thus $c\in C$. We may now
deduce that $a,b,c\in\cl^*(\{x,y,z\})$, and by
Lemma~\ref{cosegment}, $\{a,b,c\}$ is a triad of $M$ contained in
$C$. As $C$ is a cocircuit, we have $C=\{a,b,c\}$. We also see by
the list of triads and triangles in $\{x,y,z,a,b,c\}$, that
$M^*|_{\{x,y,z,a,b,c\}}\cong K_4$. We may now apply Lemma
\ref{dual} in order to obtain the result that $M^*$ is a member of
$\mathcal{P}$.
\end{proof}

Having considered the case where $|X_2\cap Y_2|\geq 2$, we must
now look at the case where $|X_2\cap Y_2|=1$. Firstly, $|Y_2|\geq
3$ giving $|X_1\cap Y_2|\geq 2$, hence $(X_1\cap Y_2)\cup \{y\}$
is a segment of size at least three. Let $e\in X_2\cap Y_2$. Then
$((X_1\cap Y_2)\cup\{y\},Y_1)$ is a vertical 2-separation of $M\ba
e$, implying that $e\in C$ by Bixby's Theorem~\ref{bixby}. Note
that $X_1\cap Y_2$ contains an element of $H$, by
Corollary~\ref{bigcor} applied to $(Y_1,y,Y_2)$. Since $H$ is
closed and $y\in H$, it follows that $(X_1\cap
Y_2)\cup\{y\}\subseteq H$. We may now apply
Corollary~\ref{segment} to the 3-separation $(Y_1,Y_2\cup\{y\})$
to obtain $|X_1\cap Y_2|=2$.

\begin{sublemma}\label{sublem4}
$y\in\cl\, ((X_1\cap Y_1)\cup\{x\})$.
\end{sublemma}
\begin{proof}
This is identical to the proof of Sublemma~\ref{rank2}.
\end{proof}

Let $z\in X_1\cap Y_2$ and consider a vertical 3-partition
$(Z_1,z,Z_2)$ of $M$ with $x\in Z_1$. We know by
Sublemma~\ref{sublem3} that if $|Z_2\cap X_2|\geq 2$, then $M$ is
in the class $\mathcal{P}^*$, so we may assume that $|Z_2\cap
X_2|=1$, and by symmetry, we see that $Z_2$ is a triad of $M$ with
$(X_1\cap Z_2)\cup\{z\}$ a triangle contained in $H$.

Let $w$ be the third member of the triangle $(X_1\cap
Y_2)\cup\{y\}$. Then since $Z_1\cup\{z\}$ is closed, either
$\{y,w\}\subseteq Z_1$ or $\{y,w\}\subseteq Z_2$. If
$\{y,w\}\subseteq Z_1$ then $(X_1\cap Z_2)\cup\{z\}$ is a triangle
with $X_1\cap Z_2\subseteq X_1\cap Y_1$, but this is not possible
because $z$ is not in $\cl(Y_1)$. Therefore, we must have
$\{y,w\}\subseteq Z_2$, and by the sizes of $Y_2$ and $Z_2$, we
have $\{y,w\}=X_1\cap Z_2$, which implies that $X_1\cap
Y_1=X_1\cap Z_1$. However, $y\in\cl((X_1\cap Y_1)\cup\{x\})$ and
hence $y\in\cl(Z_1)$, contradicting that $Z_1\cup\{z\}$ is closed.
This contradiction completes the analysis of the case where
$|X_2\cap Y_2|=1$, and the result of Theorem~\ref{mainth} (ie.
Theorem~\ref{main}) now follows.
\end{proof}

\end{document}